\numberwithin{equation}{section}
\newtheorem{thm}{Theorem}[section]
\newtheorem{lem}[thm]{Lemma}
\newtheorem{cor}[thm]{Corollary}
\newtheorem{prop}[thm]{Proposition}
\theoremstyle{definition}
\newtheorem{defn}[thm]{Definition}
\def\A{{\mathcal A}}
\def\L{{\mathcal L}}
\def\e{\varepsilon}
\def\A{{\mathcal A}}
\def\L{{\mathcal L}}
\def\R{{\mathbb R}}
\def\N{{\mathbb N}}
\def\H{{\mathcal H}}
\def\Om{{\Omega}}
\def\p{{\partial}}
\def\na{{\nabla}}
\newcommand\dist{\text{dist}}
\newcommand\I[1]{\chi_{\{#1>0\}}}  
\newcommand\fb[1]{\p \{ {#1} > 0 \}}
\newcommand\fbr[1]{\p_{\rm red} \{ {#1} > 0 \}}
\renewcommand{\leq}{\leqslant}
\renewcommand{\le}{\leqslant}
\renewcommand{\geq}{\geqslant}
\renewcommand{\ge}{\geqslant}
\renewcommand{\iota}{\Upsilon}
\renewcommand{\epsilon}{\varepsilon}
\begin{document}
\title[A nonlinear free boundary problem with 
a self-driven Bernoulli condition]{A nonlinear free boundary problem \\
with a self-driven Bernoulli condition}

\author{Serena Dipierro}
\address{School of Mathematics and Statistics,
University of Melbourne,
Richard Berry Building,
Parkville VIC 3010,
Australia.}
\email{s.dipierro@unimelb.edu.au}

\author{Aram Karakhanyan}
\address{Maxwell Institute for
Mathematical Sciences and School of Mathematics, University of
Edinburgh, James Clerk Maxwell Building, Peter Guthrie Tait Road,
Edinburgh EH9 3FD, United Kingdom.}
\email{aram.karakhanyan@ed.ac.uk}

\author{Enrico Valdinoci}
\address{School of Mathematics and Statistics,
University of Melbourne,
Richard Berry Building,
Parkville VIC 3010,
Australia, 
and Weierstra{\ss} Institut
f\"ur Angewandte Analysis und
Stochastik, Mohrenstra{\ss}e 39, 10117 Berlin, Germany,
and Dipartimento di Matematica Federigo Enriques,
Universit\`a degli Studi di Milano,
Via Cesare Saldini 50, 20133 Milano, Italy,
and Istituto di Matematica Applicata 
e Tecnologie Informatiche,
Consiglio Nazionale delle Ricerche,
Via Ferrata 1, 27100 Pavia, Italy.}
\email{enrico@math.utexas.edu}

\keywords{Nonlinear energy superposition, free boundary, regularity theory, Bernoulli condition.}
\subjclass[2010]{35R35, 35B65}

\maketitle

\baselineskip=14pt              

\begin{abstract}
We study a Bernoulli type free boundary problem with two phases
$$J[u]=\int_{\Om}|\na u(x)|^2\,dx+\Phi\big({\mathcal M}_-(u), 
{\mathcal{M}}_+(u)\big), \quad u-\bar u\in W^{1,2}_0(\Omega),
$$
where $\bar u\in W^{1,2}(\Om)$ is a given boundary datum.
Here, ${\mathcal M}_1$ and ${\mathcal M}_2$ are weighted volumes 
of $\{u\le 0\}\cap \Om$ and $\{u>0\}\cap \Om$, respectively, 
and $\Phi$ is a nonnegative function of two real variables.

We show that, for this problem, the Bernoulli constant, which determines 
the gradient jump condition across the free boundary, 
is of global type and it is indeed determined by the weighted 
volumes of the phases.

In particular, the Bernoulli condition that we obtain can be
seen as a pressure prescription in terms of the volume of the two
phases of the minimizer
itself (and therefore it depends on the minimizer itself and not only on
the structural constants of the problem).

Another property of this type
of problems is that the minimizer in $\Om$ is not necessarily 
a minimizer in a smaller subdomain, due to the nonlinear structure of the problem. 

Due to these features, this problem is highly unstable as 
opposed to the classical case studied by Alt, Caffarelli and Friedman. 
It also interpolates the classical case, in the sense that 
the blow-up limits of $u$ are minimizers of the Alt-Caffarelli-Friedman functional.
Namely, the energy of the problem somehow
linearizes in the blow-up limit.  

As a special case, we can deal with
the energy levels generated by the volume 
term $\Phi(0, r_2)=r_2^{\frac{n-1}n}$, which interpolates 
the Athanasopoulos-Caffarelli-Kenig-Salsa energy, 
thanks to the isoperimetric inequality. 

In particular, we develop a detailed optimal regularity theory for the minimizers
and for their free boundaries.
\end{abstract}

\setcounter{tocdepth}{1}
\tableofcontents

\section{Introduction}

After~\cite{AC, ACF}, a classical problem in the free boundary theory 
consists in studying the minimizers of an energy functional
which is the {\em linear superposition}
of a Dirichlet energy and a volume term.
In this case, minimizers are proved to be harmonic away from the free boundary.
Also, minimizers naturally enjoy a free boundary condition
which can be seen as a balance of the normal derivatives across the interface.

This type of problems
has a natural interpretation in terms
of two dimensional flows of two irrotational, incompressible
and inviscid fluids. Indeed, if the fluids have velocities 
${\bf v^\pm}=\nabla \phi^\pm$,
for some potential functions~$\phi^\pm$,
it holds that $\Delta \phi^\pm=0$ whenever $\phi^\pm\not =0$. 
In addition, the  Bernoulli law  states that  
\begin{equation}\label{CLASSICAL}
\frac{p^\pm(x)}{\rho^\pm}=\frac{|\bf v^\pm|}2+C_\pm\end{equation}
along every streamline, i.e. lines for which the tangent is in the direction of the velocity 
(in other words the level sets of the harmonic conjugate $\psi^\pm$ of $\phi^\pm$).
Here~$C_\pm$ are constants depending on the streamline and $p^\pm$ is the pressure from either side.
If the free boundary is smooth then the pressure $p^\pm$ is continuous and therefore from Cauchy-Riemann equations, after normalization, we  get 
that  (assuming that the densities $\rho^\pm$ are constant)
\begin{equation}\label{BERc}
p^\pm(x)+\frac{|\nabla \psi^\pm|^2}2=C_\pm  \quad 
\Longrightarrow \quad |\nabla \psi^+|^2-|\nabla \psi^-|^2=2(C_+-C_-).\end{equation}
In this interpretation, we see that the free boundary condition in~\cite{AC, ACF}
is a variational version of the classical Bernoulli law (and in fact
it justifies the validity of a weak version of this law at points where
the free boundary is not regular).\medskip

In this paper, we consider the case in which the energy functional is
a {\em nonlinear superposition} 
of a Dirichlet energy and a volume term.

We will show that general nonlinearities may produce pathologic examples,
in which minimizers may not exist, or in which the free boundary of
the minimizers is not smooth. Nevertheless, under suitable structural
assumptions on the nonlinearity, we will show that a sufficiently strong
existence and regularity theory holds true. 

In addition, we will obtain a new version of the free boundary condition,
which, in our case, turns out to be of ``global'' type.
As a matter of fact, in our case, the free boundary condition
may still be seen as a balance between
the normal derivatives
from the two sides of the free boundary, but, differently from the classical
case, this balance changes from point to point of the free boundary
and the change takes into account quantities that are defined globally,
and not only locally (e.g., they include the nonlinearity itself and
the weighted volumes of the phases
of the minimizers).

Roughly speaking, in this new free boundary condition,
the quantities~$C_\pm$ in~\eqref{CLASSICAL}
are not constant anymore and they are not locally determined.
In other words, they 
depend not only on the streamline but also 
on the weighted volumes that the streamline separates,
and, above all, on the minimizers themselves:
for this reason, we named this type of condition {\em{self-driven}}.
\medskip

An explicit geometric example related to
our problem can be given in terms of the isoperimetric inequality
\[\frac{Area(\partial \Omega^+)}{\left[\mathcal L^n(\Omega^+)\right]^{1-\frac1n}}\ge 
\frac{Area(\mathbb S^{n-1})}{
\left[\mathcal L^n(B_1)\right]^{1-\frac1n}}=: c_n,\]
that is
\[{Area(\partial \Omega^+)}\ge c_n 
{\left[\mathcal L^n(\Omega^+)\right]^{1-\frac1n}}.\]
Consequently, if~$\Omega^+:=\{u>0\}$ and~$\Phi_0(r)\le Cr^{1-\frac1n}$
for some~$C>0$, we have that
\begin{equation}\label{drtyfshsjkfgva1}\begin{split}&
J_{\rm{ACKS}}[u]:=\int_\Omega|\nabla u|^2+{Area(\partial\Omega^+)}\ge 
\int_\Omega|\nabla u|^2+c_n \left[\mathcal L^n(\Omega^+)\right]^{1-\frac1n}
\\ &\qquad\quad\ge \int_\Omega|\nabla u|^2+c_o\,\Phi_0
\left(\mathcal L^n(\Omega^+)\right)
=:J[u],\end{split}\end{equation}
with~$c_o:=c_n/C$.
In this sense, the energy functional~$J_{\rm{ACKS}}$
studied in~\cite{ACKS} provides an upper bound for the energy functional~$J$.
Notice that~$J_{\rm{ACKS}}$ is a {\em linear interpolation}
of energies (the second one being an area),
while~$J$ is a {\em nonlinear interpolation} of energies
(the second one being of volume type, but scaling like an area).
The functional~$J$ in~\eqref{drtyfshsjkfgva1} is indeed a model
case for the ones that we study in the present paper (see below for precise assumptions).

We observe that this type of problems is related to the 
Ginzburg-Landau model with three competing rates 
which balance each other for a suitable choice of the structural
parameter. The exact choice of the rate gives, 
in the limit, the energy $J_{\rm ACKS}$.
Thus, in this spirit, the functional~$J$ 
in~\eqref{drtyfshsjkfgva1}
describes a model in which the equilibrium is reached 
in terms of the
best approximation of isoperimetric inequality under given constraints.

In the following subsections, we will describe the formal mathematical
setting of the problem, the main results and the organization
of this paper.

\subsection{Problem set-up}
Let~$\Om\subset\R^n$ be an open and bounded set. In what follows,
$\lambda_1\ge0$ and~$\lambda_2>0$ are given constants.
For a given measurable function  $Q:\Om\to\R$, bounded by two positive numbers    
\begin{equation}\label{Q}
0<Q_1\leq Q(x)\le Q_2<\infty,
\end{equation}
we define the weighted partial volumes 
\begin{equation*}
{\mathcal{M}}_1(u):= 
\lambda_1\int_\Omega Q(x)\,\chi_{\{ u\le0 \}}(x)\,dx\quad{\mbox{ and }}\quad
{\mathcal{M}}_2(u):=
\lambda_2\int_\Omega Q(x)\,\chi_{\{ u>0 \}}(x)\,dx 
\end{equation*}
and the total volume 
$$ \lambda_\Omega:= \int_\Omega Q(x)\,dx.$$
It is easy to see that
\begin{equation}\label{UNASO}
{\mathcal{M}}_1(u)= 
\lambda_1\left(\lambda_\Omega -
\int_\Omega Q(x)\,\chi_{\{ u>0 \}}(x)\,dx \right)
=\lambda_1\,\big(\lambda_\Omega-\lambda_{2}^{-1}{\mathcal{M}}_2(u)\big).
\end{equation}
Let also~$\R_+:=\{x\in \R: x>0\}$. For a 
given ~$\Phi\in C^0\big( \overline{\R_+}\times\overline{\R_+},\,
\overline{\R_+}\big)\cap
C^1 \big( \R_+\times \R_+,\, \R_+\big)$
such that ~$\Phi(0,0)=0$, we consider 
\begin{equation}\label{PHI-s-12}
\Phi_0(r):= \Phi\Big( \lambda_1\,\big(\lambda_\Omega-
\lambda_{2}^{-1}r\big),\,r\Big)\end{equation}
and suppose that
\begin{equation}\label{LAMBDA12}
{\mbox{$\Phi_0'(r)\ge0$
for any~$r\in(0,\,\lambda_2\,\lambda_\Omega)$.}}\end{equation}
In view of~\eqref{UNASO},
\begin{equation*}
\Phi\big({\mathcal{M}}_1(u),{\mathcal{M}}_2(u)\big) =\Phi_0
\big({\mathcal{M}}_2(u)\big).
\end{equation*}
In this paper we study the minimization problem of the energy functional
\begin{equation}\label{EN:FUNCT}\begin{split}
J[u]\,&:=
\int_{\Om}|\na u(x)|^2\,dx+ \Phi\big({\mathcal{M}}_1(u),{\mathcal{M}}_2(u)\big)\\
&=\int_{\Om}|\na u(x)|^2\,dx+\Phi_0\big({\mathcal{M}}_2(u)\big)\end{split}
\end{equation}
in the admissible class
\begin{equation}\label{class}
\A:=\{u\in W^{1, 2}(\Om), {\mbox{ with }}
u-\bar u \in W^{1, 2}_0(\Om)\}, \end{equation}
where~$\bar u\in W^{1, 2}(\Om)$.

For a given minimizer $u$ of ~\eqref{EN:FUNCT}  the free
boundary of~$u$ is denoted by~$\Gamma:= \partial \Omega^+(u)$,
where
\begin{equation}\label{omegapiu}
\Omega^+(u) := \{ x\in\Omega {\mbox{ s.t. }} u(x)>0\}.
\end{equation}
This problem
can be viewed as an extrapolation of the classical free boundary problem
of Alt and Caffarelli~\cite{AC}, where  the authors studied the local minimizers of the energy
\begin{equation}\label{EN:FUNCT:AC}
J_{\text{AC}}[u]:=\int_{\Om} \Big( |\na u(x)|^2+Q(x)\,\chi_{\{u>0\}}(x)\Big)\,dx.
\end{equation}
Indeed, the functional in~\eqref{EN:FUNCT}
reduces to that in~\eqref{EN:FUNCT:AC} with the choices~$\lambda_1:=0$,
$\lambda_2:=1$ and~$\Phi(r_1,r_2):=r_2$.

More generally, the functional in~\eqref{EN:FUNCT} is also
an extrapolation of the two-phase free boundary problem
in~\cite{ACF}, in which, instead of the functional in~\eqref{EN:FUNCT},
the minimization problem dealt with
the energy
\begin{equation}\label{EN:FUNCT:ACF}
J_{\text{ACF}}[u]:=\int_{\Om} \Big( |\na u(x)|^2+\lambda_1\,
Q(x)\,\chi_{\{u\le0\}}(x) +\lambda_2\,Q(x)\,\chi_{\{u>0\}}(x)\Big)\,dx,
\end{equation}
since the functional in~\eqref{EN:FUNCT}
reduces to that in~\eqref{EN:FUNCT:ACF} with the choice~$\Phi(r_1,r_2):=r_1+r_2$
(in this case, condition~\eqref{LAMBDA12} reduces to~$\lambda_2\ge\lambda_1$,
compare with the assumptions of Theorem~2.3 in~\cite{ACF}).

In this sense, the energy functional in~\eqref{EN:FUNCT}
provides a free boundary problem that is either one-phase (when~$\lambda_1=0$)
or two-phase (when~$\lambda_1\ne0$) and in which the interfacial
energy depends on the volume of the phases in a possibly nonlinear way,
which is described by the function~$\Phi$. The principal difference 
from the classical case is that the free boundary condition is determined 
by the weighted volumes of the phases and hence its Bernoulli constant is 
of global type
and varies from one minimizer to another.\medskip

More precisely, if the free boundary $\Gamma:=\fb u$ is a smooth hypersurface then 
\begin{equation}\label{eq-big-lamb:0}
|\na u^+(p)|^2-|\na u^-(p)|^2=\Lambda(p), \quad p\in \Gamma,\end{equation}
where 
\begin{equation}\label{eq-big-lamb}
\Lambda(p):=
\left[\lambda_2\partial_{r_2}\Phi
\left(\lambda_1
\int_\Om Q
\,\chi_{ \{ u<0\} },\;
\lambda_2
\int_\Om Q
\,\chi_{ \{ u>0\} }\right)\,
-\,
\lambda_1\partial_{r_1}\Phi
\left(\lambda_1
\int_\Om Q
\,\chi_{ \{ u<0\} },\;
\lambda_2
\int_\Om Q
\,\chi_{ \{ u>0\} }\right)\,
\right]\,Q(p).
\end{equation}
In the classical case  $\Lambda$ is a prescribed function
and only depends on the ambient space at a given point. 
Conversely, since in our setting $\Lambda$ depends in a
nonlinear fashion on global quantities
such as~$\mathcal M_1$ and~$\mathcal M_2$,  which in turn
depend on the solution,
it is natural to expect that the problem is going to be highly unstable
(other global free boundary conditions
arise in unstable free boundary problems as the one dealt with
in formula~(1.12)
in~\cite{DKV}). In particular, comparing~\eqref{BERc} with~\eqref{eq-big-lamb:0}
and~\eqref{eq-big-lamb}, we may consider the free boundary condition
of our problem as a nonlinear prescription of the pressure in terms
of the volume of the two phases of
the minimizer.\medskip

In our framework, the instability produced by the nonlinear superposition
of energies may be, in general, quite severe, and, in fact,
{\em the minimizers do not always exist}, as we will see in Section~\ref{sec:non}. 
Thus some structural assumptions are needed
in order to develop a meaningful theory.

\bigskip

Interesting examples of nonlinearities that we can take into account are
given by~$\Phi(r_1,r_2):=r_2^{\frac{n-1}{n}}$
and~$\Phi(r_1,r_2):=(r_1+r_2)^{\frac{n-1}{n}}$. We notice that this type
of nonlinearities provides 
a scaling which is 
naturally induced by the isoperimetric inequality.
For instance, the minimizers of the Athanasopoulos-Caffarelli-Kenig-Salsa 
functional \cite{ACKS}
\[\int_\Omega|\na u|^2+{\rm Per}_\Om(\{ u>0\})\]
generate energy levels that are
above the ones of our functional. If the phases have 
isoperimetric property then this levels coincide. 
In this sense,
the main difference between the energy functionals in~\eqref{EN:FUNCT}
and~\eqref{EN:FUNCT:AC}--\eqref{EN:FUNCT:ACF} 
lies in the different scaling of the
volume term. This can be seen, as a paradigmatic example, 
by looking at the functional 
\[\int_\Omega|\na u|^2+\Phi_0
\left(\mathcal L^n(\{ u>0\}\cap \Om)\right)\]
with
$$ c r^{\frac{n-1}n}\le \Phi_0(r)\le C r^{\frac{n-1}n},$$
for some~$C\ge c>0$.
We remark that different scalings in perimeter/volume terms
may cause instability phenomena in the corresponding minimization
arguments, namely a minimizer in a given domain is not necessarily
a minimizer in a smaller domain, see~\cite{DKV}.

Other cases of interest for the nonlinearity are the following ones:
\begin{itemize}
\item 
$\Phi$ only depends on the sum of the masses of the two phases,
namely when~$\Phi(r_1,r_2)=\tilde\Phi(r_1+r_2)$
(notice that in this case, condition~\eqref{LAMBDA12} is implied
by the two conditions~$\lambda_2\geq\lambda_1$ and~$\tilde\Phi'(r)\ge0$
for any~$r>0$).

\item $\Phi$ only depends on the the sum of different powers
of the masses of the two phases, namely
$$\Phi(r_1,r_2):= 
\frac{r_1^{1+\alpha}}{1+\alpha}+\frac{r_2^{1-\beta}}{1-\beta},$$
with~$\alpha\ge0$ and~$ \beta>0$.
Notice that in this case condition~\eqref{LAMBDA12} is satisfied
if~$\lambda_1$ is sufficiently small (possibly in dependence of~$\lambda_2$
and~$\lambda_\Omega$).
\end{itemize}
For different type of free boundary problems with
a specific piecewise linear function of a volume term see~\cite{NESTOR}.

\subsection{Main results}

The main results of this paper deal with the regularity of the minimizers
and of their free boundary. We stress that, in general,
minimizers {\em may not exist} and, when minimizers exist, their 
free boundary {\em may be irregular}. We will present some explicit
examples of these pathologies in Sections~\ref{sec:non} and~\ref{sec:irregular}.

In spite of these examples, under suitable structural assumptions,
a good regularity theory holds true. 

For this, recalling the notation in~\eqref{omegapiu},
we will suppose that, for a given minimizer~$u$,
\begin{equation}\label{VARPI}
{\mathcal{L}}^n
\big(\Omega^+(u)\big)\ge\varpi>0.\end{equation}
We will see that this assumption is not restrictive and it is
satisfied in all nontrivial cases (a precise statement will be given in Lemma~\ref{VAVA}).
Then, our
first result deals with the regularity of the gradient of the
minimizers in BMO spaces. To state it, 
for any~$x_0\in\Omega$ and for any~$\rho>0$,
we use the notation
\begin{equation}\label{recn}
(\na u)_{x_0,\rho}:=\fint_{B_\rho(x_0)}\nabla u(x)\,dx. 
\end{equation}
Then, we have the following:

\begin{thm}\label{THM:BMO}
Let~$u$ be a minimizer in~$\Omega$ for the functional~$J$
in~\eqref{EN:FUNCT}. 
Assume that~\eqref{LAMBDA12} and~\eqref{VARPI} hold true.

Then~$\nabla u\in BMO_{\rm loc}(\Om,\R^n)$.
More precisely, for any~$D\Subset\Om$, there exists~$C>0$,
possibly depending on~$\varpi$, $Q$, $\Om$ and~$D$, such that
$$ \sup_{ B_r(x_0)\subseteq D}
\fint_{B_r(x_0)}|\na u(x)-(\na u)_{x_0,r}|\,dx \le C. $$
\end{thm}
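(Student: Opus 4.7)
The plan is to show that any minimizer $u$ of $J$ is a \emph{quasi-minimizer} of the Dirichlet energy, with a perturbation error of order $r^n$ at scale $r$, and then to extract the BMO bound by a classical Campanato-type comparison with harmonic replacements, much as in the Alt--Caffarelli theory for the linear functional~\eqref{EN:FUNCT:AC}.

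For the quasi-minimality step, assumption~\eqref{VARPI} combined with~\eqref{Q} forces $\mathcal M_2(u) \ge \lambda_2 Q_1 \varpi > 0$, so $\mathcal M_2(u)$ belongs to a fixed compact subinterval $I$ of the open set where $\Phi_0 \in C^1$; in particular $\Phi_0$ is Lipschitz with some constant $L = L(\varpi, Q, \Omega)$ on a neighborhood of $I$. (The degenerate case $\mathcal M_2(u) = \lambda_2 \lambda_\Omega$, in which $u > 0$ a.e. in $\Omega$, collapses the problem to pure Dirichlet minimization and is trivial.) Given $B_r(x_0) \Subset \Omega$ with $r$ small and any competitor $v \in W^{1,2}(\Omega)$ that equals $u$ outside $B_r(x_0)$, the symmetric difference $\{u>0\} \triangle \{v>0\}$ is contained in $B_r(x_0)$, so
\begin{equation*}
|\mathcal M_2(v) - \mathcal M_2(u)| \le \lambda_2 Q_2 |B_r(x_0)| \le C r^n
\end{equation*}
and therefore $|\Phi_0(\mathcal M_2(v)) - \Phi_0(\mathcal M_2(u))| \le C r^n$. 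The minimality inequality $J[u] \le J[v]$ then yields
\begin{equation*}
\int_{B_r(x_0)} |\na u|^2 \le \int_{B_r(x_0)} |\na v|^2 + C r^n.
\end{equation*}

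Taking $v$ to be the harmonic replacement $h$ of $u$ in $B_r(x_0)$ and exploiting the orthogonality of $\na h$ to $\na(u - h)$ in $L^2(B_r(x_0))$, the quasi-minimality becomes the Caccioppoli-type bound
\begin{equation*}
\int_{B_r(x_0)} |\na(u - h)|^2 \le C r^n.
\end{equation*}
Since each component $\partial_i h$ is itself harmonic, the standard interior decay gives
\begin{equation*}
\int_{B_\rho(x_0)} |\na h - (\na h)_{x_0,\rho}|^2 \le C \Bigl(\frac{\rho}{r}\Bigr)^{n+2} \int_{B_r(x_0)} |\na h - (\na h)_{x_0,r}|^2
\end{equation*}
for $0 < \rho \le r$, and combining these two bounds with triangle inequalities (and the fact that the $L^2$-average is the best constant approximation) produces the Campanato-type recursion
\begin{equation*}
\phi(\rho) \le C \Bigl[\Bigl(\tfrac{\rho}{r}\Bigr)^{n+2} \phi(r) + r^n\Bigr], \qquad \phi(\rho) := \int_{B_\rho(x_0)} |\na u - (\na u)_{x_0,\rho}|^2.
\end{equation*}
A routine iteration lemma then forces $\phi(\rho) \le C \rho^n$ for all small $\rho$, and Cauchy--Schwarz converts this $L^2$ Campanato decay into the desired $L^1$ BMO bound, uniformly in $x_0 \in D$.

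The main obstacle is concentrated in the first step: one must ensure that $\Phi_0$ behaves Lipschitz-linearly near $\mathcal M_2(u)$, which is exactly what~\eqref{VARPI} achieves by keeping $\mathcal M_2(u)$ away from the potentially singular value $r = 0$. Without such a bound the linearization $\Phi_0(\mathcal M_2(v)) - \Phi_0(\mathcal M_2(u)) = O(r^n)$ may fail, as the model case $\Phi_0(r) = r^{(n-1)/n}$ --- whose derivative blows up like $r^{-1/n}$ at the origin --- plainly shows.
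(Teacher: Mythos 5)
Your proposal is correct and takes essentially the same route as the paper: harmonic replacement in $B_r(x_0)$, an $O(r^n)$ bound on the change of the volume term obtained by using~\eqref{VARPI} to keep $\mathcal{M}_2$ away from $0$ so that $\Phi_0$ is Lipschitz-controlled there (the paper phrases this as a one-sided estimate via the monotonicity~\eqref{LAMBDA12} together with the constant $\sup_{[\lambda_2 Q_1\varpi/2,\,\lambda_2\lambda_\Omega)}\Phi_0'$, which is the same control you invoke), then orthogonality, Campanato decay for the harmonic replacement, the standard iteration lemma, and Cauchy--Schwarz. The only point you leave implicit is the case of radii that are not small, which the paper treats by a separate easy estimate (since $\Phi_0$ is bounded and $r^n$ is bounded below there) and which is routine.
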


As a consequence of Theorem~\ref{THM:BMO}, we also
obtain the following result:

\begin{cor}\label{COR:5}
Let~$u$ be a minimizer in~$\Omega$ for the functional~$J$
in~\eqref{EN:FUNCT}. 
Assume that~\eqref{LAMBDA12} and~\eqref{VARPI} hold true. Then:
\begin{itemize}
\item $u$ is locally
log-Lipschitz continuous, namely it is continuous, with modulus 
of continuity bounded by~$\sigma(t)=t|\log t|$.
\item $u$ is harmonic in the set~$\Om^+(u)$.
\item For any~$D\Subset\Om$, there exists~$C>0$,
possibly depending on~$\varpi$, $Q$, $\Om$ and~$D$, such that
\begin{equation}\label{KA78} 
\left|\frac1{r^{n-1}}\int_{\partial B_r(x_0)}u\right|\le C\,r ,\end{equation}
for any~$x_0\in\Gamma$, as long as~$B_r(x_0)\subseteq D$.
\end{itemize}
\end{cor}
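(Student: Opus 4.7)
My plan is to derive each bullet of Corollary~\ref{COR:5} from Theorem~\ref{THM:BMO} combined with short variational arguments.

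For the log-Lipschitz continuity, I would use the classical implication that $\nabla u\in BMO_{\rm loc}$ gives a modulus of continuity $\sigma(t)=t|\log t|$: given $x,y\in D$ with $r:=|x-y|$ small, I would telescope the averages $(\nabla u)_{B_{2^{-k}r_0}(z)}$ across dyadic scales connecting a fixed reference radius $r_0$ down to $r$ (with $z$ chosen on a chain from $x$ to $y$); each consecutive jump is controlled by the BMO constant of Theorem~\ref{THM:BMO}, and summing with the Poincar\'e inequality on each ball yields $|u(x)-u(y)|\le Cr(1+|\log r|)$.

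For the harmonicity on $\Omega^+(u)$, the key point is that interior perturbations that preserve the sign set of $u$ do not affect the volume term. By the continuity just established, any ball $B$ with $\overline B\Subset\Omega^+(u)$ satisfies $u\ge\delta>0$ on $\overline B$ for some $\delta>0$. Hence for $\varphi\in C_c^\infty(B)$ and $|t|<\delta/\|\varphi\|_\infty$, one has $u+t\varphi>0$ on $B$ and $\{u+t\varphi>0\}=\{u>0\}$, so $\mathcal M_2(u+t\varphi)=\mathcal M_2(u)$ and the $\Phi_0$-term is unchanged. Minimality then forces $\int_{\Omega}|\na u|^2\le\int_{\Omega}|\na(u+t\varphi)|^2$ for all small $t$, and differentiating at $t=0$ yields $\int\na u\cdot\na\varphi=0$, i.e., $\Delta u=0$ weakly in $B$; a covering argument delivers harmonicity on the whole open set $\Omega^+(u)$.

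The mean value estimate is the most delicate. Fix $x_0\in\Gamma$ with $B_r(x_0)\subseteq D$, and let $h\in W^{1,2}(B_r(x_0))$ be the harmonic extension of $u|_{\partial B_r(x_0)}$; by the mean value property the quantity in~\eqref{KA78} equals $n\omega_n\,h(x_0)$, where $\omega_n$ is the volume of the unit ball. Testing minimality against the competitor $v:=h\chi_{B_r(x_0)}+u\chi_{\Omega\setminus B_r(x_0)}$ and using the orthogonality $\int_{B_r(x_0)}\na h\cdot\na(u-h)=0$ (since $u-h\in W^{1,2}_0(B_r(x_0))$ and $h$ is harmonic) yields
\[
\int_{B_r(x_0)}|\na(u-h)|^2\le\Phi_0(\mathcal M_2(v))-\Phi_0(\mathcal M_2(u))\le Cr^n,
\]
since $|\mathcal M_2(v)-\mathcal M_2(u)|\le\lambda_2 Q_2|B_r|$ and $\Phi_0'$ is bounded on the compact admissible range of masses. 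The Poincar\'e inequality together with $h(x_0)=\fint_{B_r(x_0)}h$ and Cauchy--Schwarz then give $|h(x_0)-\fint_{B_r(x_0)}u|\le Cr$. The main obstacle, and the heart of this bullet, is to upgrade the remaining term $|\fint_{B_r(x_0)}u|$ from the log-Lipschitz bound $Cr|\log r|$ down to $Cr$; the strategy I would follow is to identify the distributional Laplacian of $u$ as a measure supported on $\Gamma$ with bounded density (using the harmonicity on $\Omega^{\pm}$ from the second bullet together with the Bernoulli jump~\eqref{eq-big-lamb:0}, noting that $\Lambda$ is bounded since $\Phi$ is $C^1$ on the compact admissible range), and then exploit the identity
\[
\frac{d}{dr}\!\left(\frac{1}{r^{n-1}}\int_{\partial B_r(x_0)}u\right)=\frac{1}{r^{n-1}}\,\Delta u(B_r(x_0))
\]
(obtained via differentiation under the integral and the divergence theorem) to reduce~\eqref{KA78} to the estimate $|\Delta u(B_r(x_0))|\le Cr^{n-1}$, which I would integrate back from $r=0$ where the quantity vanishes by continuity of $u$.
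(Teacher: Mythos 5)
Your first two bullets are fine: the telescoping/Poincar\'e argument for the log-Lipschitz modulus is the standard consequence of Theorem~\ref{THM:BMO} (the paper simply cites it), and your interior-perturbation argument for harmonicity in $\Om^+(u)$ is exactly the paper's proof. The problem is the third bullet, where you yourself flag the crucial step and then propose to close it by a route that is not available at this stage of the theory. You want to identify $\Delta u$ as a measure supported on $\Gamma$ with bounded $(n-1)$-dimensional density, invoking the Bernoulli relation~\eqref{eq-big-lamb:0}, and then integrate $\frac{d}{dr}\bigl(r^{1-n}\int_{\partial B_r(x_0)}u\bigr)=r^{1-n}\,\Delta u\bigl(B_r(x_0)\bigr)$. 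But~\eqref{eq-big-lamb:0} is only a formal statement valid for a smooth free boundary; its rigorous version (Lemma~\ref{FBCOND}) is a weak, limiting statement proved later and under extra hypotheses, and in any case a bound of the form $|\Delta u|(B_r(x_0))\le Cr^{n-1}$ requires both a pointwise bound on $|\nabla u^\pm|$ near $\Gamma$ and the estimate $\H^{n-1}(\Gamma\cap B_r)\le Cr^{n-1}$. In the paper these are Theorem~\ref{COR:7BIS} and Theorem~\ref{GMT-00}, which come \emph{after} Corollary~\ref{COR:5} and, crucially, use~\eqref{KA78} as an input (the Lipschitz bound is obtained from~\eqref{KA78} via the Alt--Caffarelli--Friedman monotonicity formula, and the density bound for $\Delta u^+$ uses the Lipschitz bound). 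So your argument is circular. Nor can you salvage it with what is already proved: subharmonicity (Lemma~\ref{LEMMA2}) plus a cutoff gives $\Delta u(B_r)\le \frac{C}{r}\int_{B_{2r}}|\nabla u|$, but the BMO estimate only controls $\fint_{B_{2r}}|\nabla u|$ up to $C(1+|\log r|)$ (the average $(\na u)_{x_0,2r}$ is not uniformly bounded), so this route returns exactly the $Cr|\log r|$ bound you are trying to beat.

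The paper removes the logarithm by a different, purely BMO-based cancellation that you should compare with. One writes, using $\fint_{\partial B_t(x_0)}u\to0$ as $t\to0$,
$$\frac{1}{r^{n-1}}\int_{\partial B_r(x_0)}u=\int_0^r\frac{1}{t^{n-1}}\,\frac{d}{dt}\left(\int_{B_t(x_0)}\nabla u(x)\cdot\frac{x-x_0}{|x-x_0|}\,dx\right)dt,$$
and observes that the kernel $\frac{x-x_0}{|x-x_0|}$ is odd, so that the constant vector $(\na u)_{x_0,t}$ can be subtracted for free at every scale:
$$\left|\frac{1}{t^{n-1}}\int_{B_t(x_0)}\bigl(\nabla u-(\na u)_{x_0,t}\bigr)\cdot\frac{x-x_0}{|x-x_0|}\right|\le t\,\fint_{B_t(x_0)}\bigl|\nabla u-(\na u)_{x_0,t}\bigr|\le Ct.$$
An integration by parts in $t$ then yields~\eqref{KA78} with the clean bound $Cr$, with no reference to the free boundary condition, to $\Delta u$ as a measure, or to any regularity of $\Gamma$. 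Your harmonic-replacement step and the Poincar\'e estimate $|h(x_0)-\fint_{B_r(x_0)}u|\le Cr$ are correct but auxiliary; the missing ingredient is precisely this odd-symmetry cancellation (or an equivalent device), and the measure-theoretic shortcut you propose cannot supply it without presupposing results that logically depend on~\eqref{KA78}.
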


The regularity of the minimizers can also be improved, to reach the optimal
Lipschitz regularity, as given by the following result:

\begin{thm}\label{COR:7BIS}
Let $u$ be a minimizer in~$\Omega$ for the functional~$J$
in~\eqref{EN:FUNCT}. Assume that~\eqref{LAMBDA12} and~\eqref{VARPI} hold true. 
Then $u\in C^{0,1}_{loc}(\Omega)$. 
\end{thm}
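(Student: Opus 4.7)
The plan is to upgrade the log-Lipschitz modulus of continuity of Corollary~\ref{COR:5} to full Lipschitz regularity. Since $u$ is harmonic in~$\Omega^+(u)$ (Corollary~\ref{COR:5}) and, by an entirely analogous variational argument using perturbations supported in $\{u<0\}$, $u$ is harmonic in~$\Omega^-(u):=\{u<0\}$ as well, the function $u$ is smooth in each phase. The task therefore reduces to controlling $|\nabla u|$ uniformly up to the free boundary $\Gamma$. By the classical interior gradient estimate for harmonic functions, this in turn reduces to proving a uniform linear growth bound
\[
\sup_{B_r(x_0)}|u|\le Cr
\qquad\text{for every } x_0\in\Gamma\cap D \text{ and every } B_r(x_0)\Subset D,
\]
with $C$ depending only on~$D$, $\varpi$, $\Omega$ and~$Q$.

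Both $u^+$ and $u^-$ are subharmonic in~$\Omega$: inside their respective phases they are harmonic, and across~$\Gamma$ they vanish continuously, so $\Delta u^\pm\ge 0$ in the distributional sense. In the one-phase case $\lambda_1=0$, one may reduce to $u\ge 0$ by truncation, so that $u=u^+$; the mean value inequality for the subharmonic $u^+$, combined with estimate~\eqref{KA78} of Corollary~\ref{COR:5} (which in this case controls $\fint_{\partial B_r(x_0)}u^+$ by $Cr$), yields the linear growth at once. In the genuine two-phase case $\lambda_1>0$, estimate~\eqref{KA78} controls only the signed average $\fint u^+ - \fint u^-$ and not each piece, and to separate the two I would invoke an Alt--Caffarelli--Friedman type monotonicity formula: the product
\[
\Psi(r):=\left(\frac{1}{r^2}\int_{B_r(x_0)}\frac{|\nabla u^+|^2}{|x-x_0|^{n-2}}\,dx\right)\left(\frac{1}{r^2}\int_{B_r(x_0)}\frac{|\nabla u^-|^2}{|x-x_0|^{n-2}}\,dx\right)
\]
is non-decreasing in $r$ by the subharmonicity of $u^\pm$, and it is bounded at some fixed radius $R>0$ through the BMO bound of Theorem~\ref{THM:BMO}. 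This separately controls the Dirichlet energies of $u^\pm$ at every small scale, which through the subharmonic mean value inequality and a Poisson-type representation produces separate linear bounds for $\fint u^+$ and $\fint u^-$.

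Once the linear growth is available, the conclusion is routine. For $x\in D\setminus\Gamma$ with $d:=\dist(x,\Gamma)$ sufficiently small, pick $y_0\in\Gamma$ with $|x-y_0|=d$, so that $B_d(x)\subset B_{2d}(y_0)$; by the linear growth, $|u|\le 2Cd$ on $B_d(x)$, and since $u$ is harmonic on $B_d(x)$ the classical interior estimate gives $|\nabla u(x)|\le C_n d^{-1}\sup_{B_d(x)}|u|\le 2CC_n$, uniformly in $x$. The main obstacle is the two-phase linear growth estimate: in our self-driven setting one must verify that the Alt--Caffarelli--Friedman monotonicity remains available despite the nonlinear volume term $\Phi_0(\mathcal{M}_2(u))$, and one must control $\Psi(R)$ at a fixed reference radius using the BMO bound of Theorem~\ref{THM:BMO} rather than an a~priori $L^2$ bound at the given scale.
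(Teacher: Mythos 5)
Your overall strategy is the one the paper uses (the coherent growth estimate coming from Theorem~\ref{THM:BMO} via \eqref{KA78}, combined with the Alt--Caffarelli--Friedman monotonicity formula), and your preliminary reductions are fine: harmonicity of $u$ in $\{u<0\}$ by the analogous interior perturbation, subharmonicity of $u^\pm$, and the reduction of the Lipschitz bound to linear growth of $|u|$ from $\Gamma$ plus interior gradient estimates. However, the pivotal two-phase step has a genuine gap. The ACF formula controls only the \emph{product} of the two weighted Dirichlet integrals: from $\Psi(r)\le\Psi(R)\le C$ you cannot conclude that each factor is ``separately controlled at every small scale'' --- one factor may be arbitrarily large while the other is small (and near one-phase portions of $\Gamma$ the $u^-$ factor is indeed negligible), so the asserted separate linear bounds for $\fint u^+$ and $\fint u^-$ do not follow from the monotonicity formula. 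Moreover the implication you invoke runs in the wrong direction: even a genuine bound on the weighted Dirichlet integral of $u^-$ at all scales would not, by a mean value inequality or Poisson representation alone, yield a linear bound on $\fint u^-$ without further work.

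What actually closes the argument (and is the bulk of the paper's proof) is a contradiction scheme that uses both ingredients simultaneously: assume $u(x_0)\ge M\rho$ with $\rho=\dist(x_0,\Gamma)$ and $x\in\Gamma$ realizing the distance; Harnack gives $u\ge c_0M\rho$ on $B_{3\rho/4}(x_0)$, hence $\fint_{\partial B_\rho(x)}u^+\gtrsim M\rho$, and then the coherent growth estimate \eqref{wahlle} (the consequence of \eqref{KA78}) forces $\fint_{\partial B_\rho(x)}u^-\gtrsim M\rho$ as well. The substantial step you omit is the conversion of these two average lower bounds into lower bounds of order $M^2$ for \emph{each} factor of the ACF functional (in the paper this is the polar-coordinate/H\"older computation, exploiting that $u^-\equiv 0$ on a ball of radius $\sim\rho$ inside the positive phase), so that the product is $\gtrsim M^4$; only then does monotonicity, together with the upper bound \eqref{5.27} at a fixed scale (which comes from local $L^2$ bounds on $u^\pm$, not from the BMO estimate per se), bound $M$. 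Without this quantitative passage from average growth to energy lower bounds, your proposal does not yield the Lipschitz estimate.
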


We also deal with the geometric properties of the minimizers, obtaining
optimal quantitative results. In particular, we prove nondegeneracy
of minimizers and linear growth from the free boundary, as stated in the next result:

\begin{thm}\label{prop-nondeg}
Let $u$ be a minimizer in~$\Omega$ of the energy functional~$J$ 
in~\eqref{EN:FUNCT}, 
$\Om^+_0$ a connected component of the positivity set~$\Om^+(u)$, 
and~$x_0\in\partial\Omega^+_0$.

Assume that~\eqref{LAMBDA12} and~\eqref{VARPI} hold true.
Suppose that~$r>0$ is small enough such that~$B_r(x_0)\Subset \Omega$ and
\begin{equation}\label{JOHN}
{\mathcal{L}}^n\big(\Omega^+(u)\setminus B_r(x_0)\big)\ge\frac{\varpi}{2}
.\end{equation}
Assume also that
\begin{equation}\label{IPTH} \Theta:=
\inf_{ [\lambda_2 Q_1\varpi/2, \;\lambda_2\lambda_\Omega]} \Phi_0'>0.
\end{equation}
Then, for any $\kappa\in(0,1)$ there exists a positive constant~$c$, depending 
on $\varpi$, $\kappa$, $\Theta$ and~$Q$,  
such that if
$$\fint_{B_r(x_0)\cap \Om^+_0}u^2< c r^2,$$ then $u^+=0$ in
$B_{\kappa r}(x_0)\cap\Om_0^+$.

In particular, for any domain $D\Subset 
\Omega$ there exists a positive constant~$c$, 
depending only on $\varpi$, $Q$, $\Omega$ and
$\dist(D, \partial \Omega)$, such that 
\begin{equation}\label{PNOAL9}
\fint_{B_r(x_0)\cap \Om^+_0}u^2\ge c r^2,
\end{equation}
for any $x_0\in \partial \Om^+_0\cap D$ and~$r>0$, such that~$B_r(x_0)\Subset D$.
\end{thm}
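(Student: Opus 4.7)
The plan is to argue by contradiction, constructing a comparison function that eliminates $u^+$ on $B_{\kappa r}(x_0)\cap\Om_0^+$. Since $u$ is harmonic in $\Om^+(u)$ by Corollary~\ref{COR:5} and vanishes continuously on $\partial\Om_0^+$, the truncation $u\,\chi_{\Om_0^+}$ is subharmonic on $\Omega$, and hence so is $(u\,\chi_{\Om_0^+})^2$. Fixing the intermediate radius $\kappa'':=(1+\kappa)/2\in(\kappa,1)$, the mean value inequality for subharmonic functions applied on the ball $B_r(x_0)\Subset\Omega$ transforms the $L^2$ hypothesis into the pointwise bound
\begin{equation*}
\sup_{B_{\kappa''r}(x_0)\cap\Om_0^+}u\;\le\;C(\kappa)\,\sqrt{c}\,r,
\end{equation*}
for a constant $C(\kappa)$ depending only on $\kappa$ and $n$.

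Next, choose a Lipschitz cutoff $\eta\in C_c(B_{\kappa''r}(x_0))$ with $\eta\equiv 1$ on $B_{\kappa r}(x_0)$ and $|\nabla\eta|\le C/((1-\kappa)r)$, set $\phi:=\eta^2$, and define the competitor
\begin{equation*}
v\;:=\;u\,\big(1-\phi\,\chi_{\Om_0^+}\big).
\end{equation*}
Then $v\in\mathcal{A}$, $v\le u$, $v=u$ off $B_{\kappa''r}(x_0)$, and $v\equiv 0$ on $B_{\kappa r}(x_0)\cap\Om_0^+$. Since $v$ agrees with $u$ outside $\Om_0^+\cap B_{\kappa''r}$, one gets $\mathcal{M}_2(u)-\mathcal{M}_2(v)\ge\lambda_2 Q_1\,|\Om_0^+\cap B_{\kappa r}|$, while $\mathcal{M}_2(v)\ge\lambda_2 Q_1\varpi/2$ by \eqref{JOHN} and $\mathcal{M}_2(u)\le\lambda_2\lambda_\Omega$, placing both values in the interval $[\lambda_2 Q_1\varpi/2,\lambda_2\lambda_\Omega]$ on which $\Phi_0'\ge\Theta$ by \eqref{IPTH}. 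The mean value theorem thus yields
\begin{equation*}
\Phi_0\big(\mathcal{M}_2(u)\big)-\Phi_0\big(\mathcal{M}_2(v)\big)\;\ge\;\Theta\,\lambda_2 Q_1\,|\Om_0^+\cap B_{\kappa r}|.
\end{equation*}
On the other hand, expanding $|\nabla v|^2-|\nabla u|^2$ on $\Om_0^+$ and applying Young's inequality together with the identity $(1-\phi)^2+\phi(2-\phi)=1$ produces the pointwise bound $|\nabla v|^2-|\nabla u|^2\le u^2|\nabla\phi|^2/(\phi(2-\phi))$ on $\Om_0^+$. The choice $\phi=\eta^2$ tempers the singularity because $|\nabla\phi|^2=4\phi|\nabla\eta|^2$, so the right-hand side is $\le C(\kappa)\,u^2/r^2$. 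Combined with the sup bound of the first step,
\begin{equation*}
\int_\Omega\big(|\nabla v|^2-|\nabla u|^2\big)\;\le\;C'(\kappa)\,c\,|B_{\kappa''r}\cap\Om_0^+|.
\end{equation*}

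The minimality inequality $J[u]\le J[v]$ compares the two estimates above, giving the density decay
\begin{equation*}
\Theta\,\lambda_2 Q_1\,|\Om_0^+\cap B_{\kappa r}|\;\le\;C'(\kappa)\,c\,|B_{\kappa''r}\cap\Om_0^+|,
\end{equation*}
so for $c$ sufficiently small in terms of $\kappa,\Theta,\lambda_2,Q_1$ the measure $|\Om_0^+\cap B_{\kappa r}|$ is dominated by a definite fraction of $|\Om_0^+\cap B_{\kappa''r}|$. Iterating this inequality along a nested sequence $\kappa=\kappa_0<\kappa_1=(1+\kappa_0)/2<\cdots<\kappa_N<1$, and re-invoking the subharmonic sup control of the first step at each level, eventually forces $|\Om_0^+\cap B_{\kappa r}|=0$, which is precisely $u^+\equiv 0$ on $B_{\kappa r}(x_0)\cap\Om_0^+$. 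The second statement \eqref{PNOAL9} then follows by contraposition at any point $x_0\in\partial\Om_0^+\cap D$, where $\Om_0^+\cap B_{\kappa r}(x_0)$ is nonempty by definition. The main obstacle is the $\kappa$-uniform control of the iteration, since the cutoff constant $C'(\kappa_j)$ degenerates as $\kappa_j\to 1$ and must be balanced against the geometric contraction through a careful tuning of the radii and of the smallness of $c$; the two-phase case ($u^-\not\equiv 0$) is handled identically, using that $\nabla u^+$ and $\nabla u^-$ have essentially disjoint supports to decouple the sign components.
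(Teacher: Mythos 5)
Your set-up (the admissible competitor $v=u(1-\phi\chi_{\Om_0^+})$, the bound $\mathcal M_2(v)\ge \lambda_2Q_1\varpi/2$ from \eqref{JOHN}, the use of \eqref{IPTH} via the mean value theorem, and the sup bound coming from subharmonicity of $u\chi_{\Om_0^+}$, which is exactly the paper's estimate \eqref{weak-max-zzz}) is sound, and your pointwise inequality $|\nabla v|^2-|\nabla u|^2\le u^2|\nabla\phi|^2/(\phi(2-\phi))$ is correct. The gap is in the last step. Your comparison produces an error term supported on the annulus, so minimality only gives $\Theta\lambda_2Q_1\,\mathcal L^n(B_{\kappa r}\cap\Om_0^+)\le C(\kappa)\,c\,\mathcal L^n\big((B_{\kappa''r}\setminus B_{\kappa r})\cap\Om_0^+\big)$, and the right-hand side is \emph{not} controlled by the quantities on the left, so nothing can be absorbed. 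The proposed iteration does not rescue this: writing $f(\rho):=\mathcal L^n(B_{\rho r}\cap\Om_0^+)$, each step gives $f(\kappa_j)\le\frac{\theta_j}{1+\theta_j}f(\kappa_{j+1})$ with $\theta_j\approx \frac{Cc}{\Theta\lambda_2Q_1(1-\kappa_{j+1})^n(\kappa_{j+1}-\kappa_j)^2}$, and for any nested sequence accumulating at $1$ one has $\theta_j\to\infty$ geometrically, so $\sum_j\theta_j^{-1}<\infty$ and $\prod_j\frac{\theta_j}{1+\theta_j}\ge e^{-\sum_j\theta_j^{-1}}>0$. Hence the scheme yields at best $\mathcal L^n(B_{\kappa r}\cap\Om_0^+)\le C\,c\,r^n$, i.e.\ smallness of the positivity measure, never its vanishing. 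Since $x_0\in\partial\Om_0^+$ forces $B_{\kappa r}(x_0)\cap\Om_0^+$ to be a nonempty open set, a small positive measure produces no contradiction, so the statement ``$u^+=0$ in $B_{\kappa r}\cap\Om_0^+$'' and its contrapositive \eqref{PNOAL9} do not follow from your argument.

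What is missing is precisely the absorption mechanism that the paper builds in: instead of a brutal cutoff, it takes the harmonic replacement of $u$ in the annulus $D^+=(B_r\setminus B_{\kappa r})\cap\Om_0^+$ (regularized at level $\e$ to justify the integration by parts), so that the Dirichlet excess reduces to a flux integral over $\partial B_{\kappa r}\cap\Om_0^+$; this flux is bounded by $C\gamma\int_{\partial B_{\kappa r}\cap\Om_0^+}u$ via an explicit radial barrier, with $\gamma^2=r^{-2}\fint_{B_r\cap\Om_0^+}u^2$, and the trace inequality \eqref{JH:CA} then converts the boundary integral into integrals over the \emph{same} set $B_{\kappa r}\cap\Om_0^+$, leading to $\int_{B_{\kappa r}\cap\Om_0^+}\big(|\nabla u|^2+\lambda_2Q\chi_{\{u>0\}}\big)\le C\gamma(1+\gamma)\int_{B_{\kappa r}\cap\Om_0^+}\big(|\nabla u|^2+\lambda_2Q\chi_{\{u>0\}}\big)$ (see \eqref{zzzz1} and \eqref{TER}); for $\gamma$ small this forces the quantity to vanish, which is the conclusion you need. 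Unless you replace your annulus error term by something controlled by the left-hand side (or supply an independent density estimate, which in this paper is itself derived from this theorem and would be circular), the proof is incomplete. A minor additional remark: the closing claim that the two-phase case needs a decoupling of $\nabla u^+$ and $\nabla u^-$ is moot for your competitor, since $v$ only modifies $u$ inside $\Om_0^+\subseteq\{u>0\}$ and the negative phase is untouched.
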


Interestingly, the result in Theorem~\ref{prop-nondeg} holds true in any
connected component of the positivity set of the minimizers.

In this paper, we also establish
density results for minimizers, that can be of independent interest as well,
and that can be used to establish the minimizing properties
of the blow-up limits of the minimizers,
which indeed turn out to be minimizers of more classical free boundary problems.
In this setting, the result that we obtain is the following:

\begin{thm}\label{lem-ACF-blowup}
Let~$u$ be a minimizer in~$\Omega$ for the functional in~\eqref{EN:FUNCT} and let~$
u_0$ be the blow-up limit\footnote{A standard,
explicit definition of the blow-up limit
and the existence of such limit
is given in Proposition~\ref{tech-2}.}. 
Assume that~$Q$ is continuous at~$0$ and that~\eqref{LAMBDA12} and~\eqref{VARPI} hold true.

Then, for any fixed~$r>0$, we have that~$u_0$
is a minimizer of the functional
$$ J_0[w]:=\int_{B_r(x_0)} |\nabla w|^2 + \lambda_0 \,
{\mathcal{L}}^n\big( B_r(x_0)\cap\{w>0\}\big),$$
where
\begin{equation}\label{di u} \lambda_0 := \lambda_2\,Q(0)\,
\Phi_0'\left( \lambda_2\int_{\Omega} 
Q(x)\,\chi_{\{u>0\}}(x)\,dx\right).\end{equation}
\end{thm}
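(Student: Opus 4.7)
The plan is to exploit minimality of $u$ by constructing competitors that agree with $u$ outside a small ball and, on that ball, are rescalings of arbitrary competitors for $u_0$. Concretely, let the blow-up sequence be $u_k(x):=u(r_k x+x_0)/r_k$ with $r_k\searrow 0$ and (by the notation of the statement) $x_0=0$. Fix $r>0$ and a candidate $w$ with $w-u_0\in W^{1,2}_0(B_r)$. Define $v(y):=u(y)$ for $y\notin B_{r r_k}$ and $v(y):=r_k\, w(y/r_k)$ on $B_{r r_k}$, so that $v\in\A$. Since $u$ is a minimizer, $J[u]\le J[v]$. Writing this out, the Dirichlet parts give
\begin{equation*}
r_k^n\int_{B_r}|\nabla u_k|^2\,dx \;\le\; r_k^n\int_{B_r}|\nabla w|^2\,dx \;+\;\bigl[\Phi_0(\M_2(v))-\Phi_0(\M_2(u))\bigr].
\end{equation*}

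The next step is to linearize the nonlinearity. By the change of variables $y=r_k x$, one gets
\begin{equation*}
\M_2(v)-\M_2(u)=\lambda_2\,r_k^n\int_{B_r} Q(r_k x)\bigl[\chi_{\{w>0\}}(x)-\chi_{\{u_k>0\}}(x)\bigr]\,dx,
\end{equation*}
so this quantity is $O(r_k^n)$. Since $\Phi_0\in C^1$ on $\R_+$, a first-order Taylor expansion at $\M_2(u)$ yields
\begin{equation*}
\Phi_0(\M_2(v))-\Phi_0(\M_2(u))=\Phi_0'(\M_2(u))\bigl(\M_2(v)-\M_2(u)\bigr)+o(r_k^n).
\end{equation*}
Dividing the minimality inequality by $r_k^n$ and using the continuity of $Q$ at $0$, the right-hand side becomes, as $k\to\infty$,
\begin{equation*}
\int_{B_r}|\nabla w|^2\,dx+\lambda_2\,Q(0)\,\Phi_0'(\M_2(u))\int_{B_r}\bigl[\chi_{\{w>0\}}-\chi_{\{u_0>0\}}\bigr]\,dx,
\end{equation*}
which, upon identifying $\lambda_0$ as in \eqref{di u} and rearranging, gives exactly $J_0[w]-\lambda_0\,\mathcal L^n(B_r\cap\{u_0>0\})$.

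On the left-hand side I would use the weak $W^{1,2}_{\rm loc}$ convergence of $u_k$ to $u_0$ (which follows from the uniform Lipschitz bound of Theorem \ref{COR:7BIS}) together with lower semicontinuity of the Dirichlet integral to obtain $\int_{B_r}|\nabla u_0|^2\le\liminf_k\int_{B_r}|\nabla u_k|^2$. Combining with the previous display produces $J_0[u_0]\le J_0[w]$, as desired. A small technical point is the admissibility of the limit comparison: since $u_k\to u_0$ uniformly on compact sets (again by Theorem \ref{COR:7BIS}), the trace of $u_k$ on $\partial B_r$ converges to that of $u_0$, and one may, by a standard cut-off argument near $\partial B_r$, construct admissible approximants to $w$ whose difference with $u_k$ lies in $W^{1,2}_0(B_r)$ with negligible energy cost; this suffices to make $v$ a legitimate competitor.

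The main obstacle is the passage to the limit in the characteristic-function term, i.e. showing
\begin{equation*}
\int_{B_r}\chi_{\{u_k>0\}}\,dx \longrightarrow \int_{B_r}\chi_{\{u_0>0\}}\,dx.
\end{equation*}
Uniform convergence of $u_k$ to $u_0$ only gives $\chi_{\{u_0>0\}}\le\liminf\chi_{\{u_k>0\}}$ and $\limsup\chi_{\{u_k>0\}}\le\chi_{\{u_0\ge 0\}}$ pointwise outside the zero level set, so the residual task is to rule out a contribution from $\{u_0=0\}$ of positive measure inside the approximating positivity sets. This is precisely the content of the nondegeneracy statement of Theorem \ref{prop-nondeg} applied to $u_k$: each point where $u_k$ is small in an $L^2$ sense must be outside the positivity set, which prevents the positivity sets of $u_k$ from accumulating on $\{u_0=0\}$ and yields the desired $L^1$-convergence of $\chi_{\{u_k>0\}}$. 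Once this is in hand, all terms pass to the limit and the ACF-type minimality of $u_0$ follows.
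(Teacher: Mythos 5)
Your proposal is correct and follows essentially the same route as the paper: rescale an arbitrary competitor for $u_0$, glue it to $u$ across the boundary of the small ball, invoke minimality of $u$, Taylor-expand $\Phi_0$ at ${\mathcal{M}}_2(u)$, and pass to the limit using the blow-up convergences. The ``standard cut-off argument'' you defer is precisely the paper's competitor $v_\rho=v_0+(1-\eta)(u_\rho-u_0)$, whose extra Dirichlet and positivity-set errors are supported in $\{\eta<1\}$ and are removed by letting $\{\eta=1\}$ invade $B_r$ at the end, while the $L^1$ convergence of $\chi_{\{u_k>0\}}$ that you re-derive via nondegeneracy is simply imported in the paper from Proposition~\ref{tech-2}.
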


We stress
that the
quantity~$\lambda_0$
in~\eqref{di u} depends on the minimizer~$u$.
Furthermore,
it is useful to remark that, when~$\Phi_0$ is concave,
minimizers of~$J$ are also minimizers of the functional
\begin{eqnarray*}
&& J_\star [w]:=\int_{\Omega} |\nabla w|^2 + \lambda_\star \,\lambda_2\,
\int_{\Omega} Q(x)\,\chi_{\{w>0\}}(x)\,dx=
\int_{\Omega} |\nabla w|^2 +\lambda_\star \,{\mathcal{M}}_2(w),
\\
{\mbox{with }}&& \lambda_\star:=
\Phi_0'\left( \lambda_2\int_{\Omega} 
Q(x)\,\chi_{\{u>0\}}(x)\,dx\right)=\Phi_0'\left( {\mathcal{M}}_2(u)\right),
\end{eqnarray*}
namely the nonlinear free boundary problem in~\eqref{EN:FUNCT}
reduces to the classical ones in~\cite{AC, ACF} (with a coefficient
depending on the minimizer itself). Indeed, if~$u$ is a minimizer of~$J$
and~$v$ is a perturbation of~$u$, it follows that
\begin{eqnarray*}
0&\ge& J[u]-J[v]\\ &=&J_\star[u]-J_\star[v]
+\Phi_0\big({\mathcal{M}}_2(u)\big)
-\Phi_0\big({\mathcal{M}}_2(v)\big) +\lambda_\star
\left[ {\mathcal{M}}_2(v)-{\mathcal{M}}_2(u)\right].
\end{eqnarray*}
Since the concavity of~$\Phi_0$ implies that
$$ \Phi_0\big({\mathcal{M}}_2(v)\big)
-\Phi_0\big({\mathcal{M}}_2(u)\big)
\le \Phi_0'\big({\mathcal{M}}_2(u)\big)
\,\left[ {\mathcal{M}}_2(v)-{\mathcal{M}}_2(u)\right]=
\lambda_\star
\left[ {\mathcal{M}}_2(v)-{\mathcal{M}}_2(u)\right] ,$$
we thus obtain that~$0\ge J_\star[u]-J_\star[v]$, hence~$u$
is a minimizer for~$J_\star$.
\medskip

In our setting, we also obtain partial regularity results (valid in any
dimension) for free boundary points, as stated in the following result:

\begin{thm}\label{GMT-00}
Let~$u$ be a minimizer in~$\Omega$ for the functional~$J$
in~\eqref{EN:FUNCT}. 
Assume that~\eqref{LAMBDA12} and~\eqref{VARPI} are satisfied.

Then, the following statements hold true:
\begin{itemize}
\item[{(i)}] $\Delta u^+$ is a Radon
measure and, for any~$x\in\Gamma$ and any~$r>0$ such that~$B_{2r}(x)\subset\Omega$
and~${\mathcal{L}}^n
\big(\Omega^+(u)\setminus B_r(x)\big)\ge\varpi/2$, we have that
$$ \int_{B_r(x)}\Delta u^+\le\frac1{r}\int_{B_{2r}(x)}|\nabla u^+|.$$
\item[{(ii)}] For any subdomain~$D\Subset\Omega$ there exists~$r_0>0$
such that
$$ \int_{B_r(x)}\Delta u^+ \ge c r^{n-1},$$
for any~$ r \in(0, r_0)$, $x\in\Gamma$ and such that~$B_r(x)\subseteq D$,
for a suitable $c>0$.
\item[{(iii)}] If~$B_r\subseteq\Omega$, then
\begin{align}\label{90AKJA:A}
& \H^{n-1}\big(B_r\cap \partial\{u>0\}\big)<+\infty\\  \label{90AKJA:B}
{\mbox{and }}\quad&\H^{n-1}\big((B_r\cap\fb u)\setminus\fbr u\big)=0.
\end{align}
\end{itemize}
\end{thm}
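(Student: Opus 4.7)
The natural first move is to establish that $u^+$ is subharmonic on $\Omega$. Since $u$ is harmonic on $\Omega^+(u)$ by Corollary~\ref{COR:5} and Lipschitz continuous by Theorem~\ref{COR:7BIS}, one can test the weak equation $\int_{\Omega^+(u)}\nabla u\cdot\nabla\psi=0$ against $\psi=\phi\,\rho_\varepsilon(u)$, with $\rho_\varepsilon$ a smooth nondecreasing approximation of $\chi_{(0,\infty)}$ and $\phi\in C_c^\infty(\Omega)$, $\phi\geq 0$, and pass to the limit $\varepsilon\to 0$; the identity $\nabla u^+=\chi_{\{u>0\}}\nabla u$ a.e.\ and the nonnegativity of the residual term $\int\phi\,\rho_\varepsilon'(u)|\nabla u|^2$ yield $-\int\nabla u^+\cdot\nabla\phi\geq 0$, i.e.\ $\Delta u^+\geq 0$ in $\mathcal{D}'(\Omega)$. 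Riesz representation then gives a nonnegative Radon measure. For the quantitative bound, test against the Lipschitz cutoff $\eta(y)=\max\{0,\min\{1,(2r-|y-x|)/r\}\}$, with $\eta\equiv 1$ on $B_r(x)$, $\eta\equiv 0$ outside $B_{2r}(x)$, and $|\nabla\eta|\leq 1/r$, so that
$$\int_{B_r(x)}\Delta u^+\;\leq\;\int\eta\,d(\Delta u^+)\;=\;-\int\nabla\eta\cdot\nabla u^+\;\leq\;\frac{1}{r}\int_{B_{2r}(x)}|\nabla u^+|.$$

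\textbf{Part (ii).} This is the most delicate step. From the nondegeneracy estimate in Theorem~\ref{prop-nondeg} combined with the Lipschitz bound of Theorem~\ref{COR:7BIS}, we extract a point $y_0\in B_r(x)$ with $u^+(y_0)\gtrsim r$, and consequently $u^+\gtrsim r$ on a subball of radius $\sim r$ inside $B_r(x)$. Subharmonicity then propagates this bound to the boundary sphere (for a comparable radius): $\fint_{\partial B_r(x)}u^+\geq c_2 r$. Letting $h$ be the harmonic extension in $B_r(x)$ of $u^+|_{\partial B_r(x)}$, one has $u^+\leq h$, and the Riesz representation of the superharmonic function $h-u^+$ via the Green's function $G_r$ of $B_r(x)$ with pole at $x$ gives
$$c_2 r\;\leq\;h(x)\;=\;\int_{B_r(x)}G_r(x,y)\,d(\Delta u^+)(y).$$
Splitting between $\{|y-x|\leq r/2\}$ (using $G_r\leq c_n|y-x|^{2-n}$ and the layer-cake bound $\int_0^{r/2}s^{1-n}\Delta u^+(B_s(x))\,ds\leq Cr$, which follows from part (i) at all small scales) and $\{|y-x|>r/2\}$ (where $G_r\leq Cr^{2-n}$) yields $c_2 r\leq C_1 r+C_2 r^{2-n}\Delta u^+(B_r(x))$. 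The main obstacle is that the auxiliary term $C_1 r$ must not absorb the lower bound $c_2 r$; this is arranged by restricting to $r<r_0$ sufficiently small so that the nondegeneracy constant on an appropriate comparable scale dominates, after which rearrangement gives $\Delta u^+(B_r(x))\geq c r^{n-1}$.

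\textbf{Part (iii).} For \eqref{90AKJA:A}, cover $B_r\cap\Gamma$ by balls $B_{\rho_i}(x_i)$ with $x_i\in\Gamma$ and $\rho_i<\delta$, extract via Vitali a disjoint subcollection with $\bigcup_i B_{5\rho_i}(x_i)\supseteq B_r\cap\Gamma$, and estimate
$$\sum_i(5\rho_i)^{n-1}\;\leq\;\frac{5^{n-1}}{c}\sum_i\Delta u^+\bigl(B_{\rho_i}(x_i)\bigr)\;\leq\;\frac{5^{n-1}}{c}\,\Delta u^+(B_{2r}),$$
using part (ii) for the first inequality and disjointness plus part (i) for the second. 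Sending $\delta\to 0$ gives $\mathcal{H}^{n-1}(B_r\cap\Gamma)<+\infty$. For \eqref{90AKJA:B}, from \eqref{90AKJA:A} the set $\Omega^+(u)$ has locally finite perimeter, so by the De Giorgi--Federer structure theorem $\mathcal{H}^{n-1}(\partial^m\Omega^+(u)\setminus\partial_{\rm red}\Omega^+(u))=0$, where $\partial^m$ denotes the measure-theoretic boundary. It thus suffices to check that $\mathcal{H}^{n-1}$-almost every $x\in\Gamma$ lies in $\partial^m\Omega^+(u)$, i.e.\ that both $\Omega^+(u)$ and $\{u\leq 0\}$ have positive Lebesgue density at $x$. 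Positive density of $\Omega^+(u)$ is an immediate byproduct of the bulk nondegeneracy estimate produced in part (ii); positive density of $\{u\leq 0\}$ follows from a standard competitor argument relying on the minimality of $u$ and condition~\eqref{LAMBDA12}: if $|\{u\leq 0\}\cap B_r(x)|=o(r^n)$, replacing $u$ inside $B_r(x)$ by its harmonic extension (modified by a suitable cutoff so that $u^+$ is driven to zero near the center) produces an admissible competitor whose Dirichlet energy gain dominates the volume penalty from $\Phi_0$, contradicting minimality.
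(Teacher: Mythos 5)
Your part (i) is fine and is exactly the standard integration by parts (against a Lipschitz cutoff) that the paper delegates to \cite{DK}, and your covering/Federer strategy for part (iii) is the same general route the paper invokes via \cite{DK}. The genuine problem is in part (ii), which you attack directly rather than by the paper's compactness argument, and your direct argument has a gap at the decisive step. You arrive at $c_2 r\le C_1 r+C_2 r^{2-n}\,\Delta u^+(B_r(x))$, where $c_2$ comes from the nondegeneracy of Theorem~\ref{prop-nondeg} and $C_1$ comes from the Lipschitz bound through part (i) (namely $\Delta u^+(B_s(x))\le Cs^{n-1}$), and you propose to prevent $C_1r$ from absorbing $c_2r$ ``by restricting to $r<r_0$ sufficiently small''. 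This cannot work: both terms are exactly linear in $r$, so their ratio $C_1/c_2$ is scale invariant, and there is no structural reason for the Lipschitz constant to be dominated by the nondegeneracy constant. Dyadically, the near-pole contribution $\int_{B_{r/2}(x)}|y-x|^{2-n}\,d\Delta u^+$ is of size $\sum_k 2^{-k}rC\approx Cr$, i.e.\ of the same order as the quantity you want to isolate, at every scale; so the splitting around the pole $x\in\Gamma$ yields no smallness. To salvage a direct proof you need an additional idea, for instance placing the Green pole at the clean-ball center $y_0$ given by Theorem~\ref{HJsdfg} (there $\Delta u^+$ carries no mass in $B_{c_1 r}(y_0)$, so the Green kernel is bounded by $Cr^{2-n}$ on the support of the measure) and transferring the bound $(h-u^+)(x)\ge c_2 r$ from $x$ to $y_0$ via the super-mean-value property and the weak Harnack inequality for the nonnegative superharmonic function $h-u^+$.

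For comparison, the paper proves (ii) by contradiction: if $\int_{B_{r_j}(x_j)}\Delta u^+\le r_j^{n-1}/j$, the rescalings $v_j=u(x_j+r_j\,\cdot)/r_j$ are uniformly Lipschitz, the limit $\bar v^+$ is superharmonic (the hypothesis annihilates the Laplacian in the limit), nonnegative and vanishes at the origin, hence $\bar v^+\equiv0$ in $B_1$ by the minimum principle, which contradicts the nondegeneracy \eqref{PNOAL9} passed to the limit; this sidesteps any competition of constants. Finally, in part (iii) your one-sentence justification of the positive density of $\{u\le0\}$ at $\mathcal H^{n-1}$-a.e.\ free boundary point is not a routine consequence of \eqref{LAMBDA12} alone, and the competitor you describe (driving $u^+$ to zero near the center) is the one used to prove nondegeneracy, not complement density; this step requires a genuine argument (e.g.\ a case distinction according to whether $u^-$ is degenerate, or the mean-value balance when $\Phi_0'$ vanishes), which is precisely why the paper refers to \cite{DK} for \eqref{90AKJA:B}.
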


In dimension~$2$, we also obtain a complete regularity theory for
the minimizers. This result goes as follows:

\begin{thm}\label{thm:reg}
Let~$n=2$.
Let~$\Phi_0$ be such that~$\Phi_0'>0$.
Assume also that~\eqref{VARPI} is satisfied. Then each free boundary point is critically flat and hence 
$\fb u$ is continuously differentiable.
\end{thm}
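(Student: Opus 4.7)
The plan is to reduce the regularity question to the classical one-phase Alt-Caffarelli theory through a blow-up argument, using Theorem~\ref{lem-ACF-blowup} as the bridge. Fix a free boundary point $x_0\in\Gamma$ and consider the rescaled family $u_r(x):=u(x_0+rx)/r$. By the Lipschitz regularity (Theorem~\ref{COR:7BIS}), the family $\{u_r\}$ is uniformly bounded on every compact set, and by the nondegeneracy estimate~\eqref{PNOAL9} in Theorem~\ref{prop-nondeg}, the positivity set $\{u_r>0\}$ enjoys a uniform lower density bound at the origin. Extracting a subsequence $r_k\to 0$, one obtains a blow-up limit $u_0$, which by Theorem~\ref{lem-ACF-blowup} is a minimizer on every ball of the classical functional
$$J_0[w]=\int_{B_R}|\nabla w|^2+\lambda_0\,{\mathcal L}^n\bigl(B_R\cap\{w>0\}\bigr),$$
where $\lambda_0=\lambda_2\,Q(0)\,\Phi_0'\bigl({\mathcal M}_2(u)\bigr)$. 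The standing assumptions $\Phi_0'>0$, $Q\geq Q_1>0$ and $\lambda_2>0$ guarantee that $\lambda_0>0$, so $u_0$ is a bona fide global minimizer of the Alt-Caffarelli one-phase functional with a strictly positive Bernoulli constant.

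Next, I would invoke the classical classification of global minimizers in dimension $n=2$: any nonnegative, globally defined minimizer $u_0$ of $J_0$ with $0\in\partial\{u_0>0\}$ must be a half-plane solution of the form
$$u_0(x)=\sqrt{\lambda_0}\,(x\cdot\nu)^+$$
for some unit vector $\nu\in\mathbb{S}^1$. This is the two-dimensional Alt-Caffarelli classification, which follows from the monotonicity formula, harmonicity in $\Omega^+(u_0)$ (inherited from Corollary~\ref{COR:5}), linear growth from $\Gamma$ (inherited from the Lipschitz bound and~\eqref{PNOAL9}), and the fact that in the plane the only homogeneous degree-one harmonic functions on a cone with the correct Bernoulli jump condition are the linear functions on a half-plane. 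In particular, the blow-up limit is unique up to rotation, and more importantly, it is a half-plane solution.

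The existence of such a half-plane blow-up at every free boundary point is precisely the \emph{critical flatness} property at $x_0$: for every $\delta>0$ there is a scale $r_\delta>0$ such that in $B_{r_\delta}(x_0)$ the set $\{u>0\}$ lies in a $\delta r_\delta$-neighborhood of a half-space, and $u$ is correspondingly close to the associated half-plane solution. Having established critical flatness at \emph{every} point of $\Gamma$, I would then appeal to the flatness-implies-$C^{1,\alpha}$ theorem of Alt-Caffarelli (applied to the minimizer $u$ itself, which satisfies the weak free boundary condition with the effective, minimizer-dependent constant $\lambda_0$), to conclude that $\Gamma$ is continuously differentiable near $x_0$; since $x_0$ was arbitrary this finishes the proof.

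The main difficulty I anticipate is ensuring that the blow-up limit is truly nontrivial and that the effective coefficient $\lambda_0$ one obtains is strictly positive and uniform in the blow-up passage: this is exactly where the hypotheses $\Phi_0'>0$ and~\eqref{VARPI} enter in a crucial way, since without them the limit could degenerate or the Bernoulli constant could vanish, and the 2D classification would no longer force a half-plane shape. The restriction to $n=2$ enters only through the classification step, reflecting the fact that in higher dimensions one cannot yet rule out singular minimizing cones at every free boundary point.
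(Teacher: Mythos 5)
Your argument covers only half of the problem: it implicitly assumes that the blow-up limit $u_0$ is nonnegative, i.e.\ that you are in a one-phase situation. But Theorem~\ref{thm:reg} concerns the two-phase functional, where the minimizer $u$ may change sign, and Theorem~\ref{lem-ACF-blowup} only says that $u_0$ minimizes $J_0[w]=\int|\nabla w|^2+\lambda_0\,\mathcal L^n(\{w>0\})$ \emph{among sign-changing competitors}; such minimizers need not be nonnegative, since the negative part is simply harmonic and costs no volume term. At a free boundary point $x_0$ where $u^-$ is nondegenerate (in the sense of the negation of~\eqref{DEG}), the blow-up is a two-plane solution $\mu_1(x\cdot\ell)^+-\mu_2(x\cdot\ell)^-$ with $\mu_2>0$, and the classical one-phase Alt--Caffarelli classification you invoke simply does not apply. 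This is exactly why the paper splits the proof into two cases: when $u^-$ is degenerate, Lemmata~\ref{upng}--\ref{0-grad} and Proposition~\ref{NEWPF} show (via Theorem~\ref{lem-ACF-blowup} together with the degeneracy, which is what yields $u_0\ge0$, and then the Weiss monotonicity formula) that the blow-up is a half-plane solution; when $u^-$ is nondegenerate, Lemma~\ref{lem-nax-verjin} uses the Alt--Caffarelli--Friedman monotonicity formula and the two-dimensional classification of ACF (their Lemma~6.6) to show the blow-up is a two-plane solution. Both outcomes give flatness, but the second case is not reachable by your route.

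A related gap is in the final step: since the free boundary condition is the two-phase balance $(\partial_\nu^+u)^2-(\partial_\nu^-u)^2=\Lambda$, flatness cannot be upgraded to $C^{1}$ regularity by the one-phase ``flatness implies $C^{1,\alpha}$'' theorem of Alt--Caffarelli applied to $u$ itself. The paper instead verifies that $u$ is a viscosity solution of the two-phase problem (Lemma~\ref{lem-visc-sol-full}) and then invokes Caffarelli's Harnack-inequality regularity theory for viscosity solutions (as in \cite{C-H1,C-H2}, see also \cite{DK}), which handles flat two-phase free boundaries. Your observation that $\Phi_0'>0$, $Q\ge Q_1>0$ and~\eqref{VARPI} guarantee $\lambda_0>0$ and a nontrivial blow-up is correct and is indeed where these hypotheses enter, but to complete the proof you must treat the nondegenerate negative phase and conclude with the two-phase (viscosity) regularity theory rather than the one-phase theory.
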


We stress that, in this paper,
the techniques that we develop
are strong enough to allow
a unified
treatment of the one and two phase cases simultaneously.

Furthermore, all the results of this paper
are valid in any dimension (with the only exception
of Lemma~\ref{lem-nax-verjin}
and Theorem~\ref{thm:reg}).

It is also worth to remark that many of the results
presented in this paper
are rather subtle to obtain, since they strongly rely
on some specific behavior of the nonlinearity,
and fail once these requirements are not met:
for instance,
the regularity result in Theorem~\ref{thm:reg}
fails to be true in the case of nonlinear functions~$\Phi_0$
which are constant on some interval (see Theorem~1.1
in~\cite{DKV}, where an example
of free boundary given by a singular cone in the plane
is constructed). In this sense, even the results
whose statements
``resemble'' cases already known
in the literature for the linear case
require a careful analysis
of the different setting and a precise determination
of appropriate structural assumptions.

\subsection{Organization of the paper}
The paper is organized as follows.
In Section~\ref{sec-exist} we prove the existence of 
minimizers. The proof is standard and is based on a 
semicontinuity argument and on a refinement 
of Egoroff's theorem for Sobolev functions.

In Section~\ref{sec:non}, an explicit example of volumetric 
function~$\Phi_0$ for 
which no solution exists is constructed. 
While~$\Phi_0(r)$ suffers a jump at~$r=1$, the non-existence 
is still surprising because it shows that, for such~$\Phi_0$, 
the set of admissible functions is not empty.  
In Section~\ref{sec:irregular} we construct another explicit example 
of~$\Phi_0$ that does not satisfy the structural assumption in~\eqref{LAMBDA12}:
in this case, minimizers do exist, 
but their free boundary is irregular.

That done, we begin to establish the basic properties of the 
minimizers in Section \ref{sec:BMO}. First a  lower bound for 
the positivity set~$\Omega^+(u)$ is proved 
for a suitable boundary condition. Then we show that 
the gradients of mimimizers are locally BMO functions,
that is we prove Theorem~\ref{THM:BMO}. This, 
in turn, implies that~$u$ is locally log-Lipschitz continuous, 
as given by Corollary~\ref{COR:5}. 
For the one-phase problem this immediately implies the
linear growth of $u$ away from free boundary. 
One of the by-products of the local BMO estimate is the coherent growth 
estimate~\eqref{KA78}. Using this and the Alt-Caffarelli-Friedman monotonicity 
theorem we prove the local Lipschitz regularity for 
the minimizers of the two-phase problem, as stated in Theorem~\ref{COR:7BIS}.

Then, we use a domain variation argument, to derive the 
nonlocal Bernoulli condition in Section~\ref{sec:FB-cond}. 

The non-degeneracy of minimizers given by 
Theorem~\ref{prop-nondeg}
is proved in Section~\ref{sec:nondeg}. 

In Section~\ref{sec:density} we show that for every ball $B$ 
centered at the free boundary there exists a 
smaller ball $B'\subset \Omega^+(u)\cap B$ 
such that $\mathcal L^n(B')\ge c\mathcal L^n(B)$,
for some universal constant $c>0$.  

Section \ref{sec:blowup} is devoted to the study of the properties of the blow-up limits and to the proof of Theorem~\ref{lem-ACF-blowup}.
In particular, we show that the blow-up limits
of the minimizers become global solutions for the 
Alt-Caffarelli-Friedman functional provided that $Q$ is continuous. 

In Section \ref{sec:partialreg}
we prove the partial regularity of the free boundary,
as given by Theorem~\ref{GMT-00}, namely, that the 
$\fb u$ is of locally finite perimeter and the reduced boundary 
has full $\mathcal H^{n-1}$ measure in $\fb u$. In particular, we show that 
the measure theoretic normal exists at $\mathcal H^{n-1}$ a.e. point of $\fb u$.
 
In Section \ref{sec:FBregularity2D} we prove that at the flat free boundary points the 
free boundary is regular and establish the full regularity of the free boundary in two dimensions, as stated in Theorem~\ref{thm:reg}.
 
\section*{Notation}
Let us fix some notation. 
\begin{itemize}
\item $\mathcal L^n$ is the $n$ dimensional Lebesgue measure. 
\item ${\mathcal{H}}^{n-1}$ is the $n-1$ dimensional Hausdorff measure.
\item $u^+(x):=\max\{u(x), 0\}$ and $u^-(x):=-\min\{u(x), 0\}$ are 
the positive and the negative parts of~$u$, respectively, 
so that $u=u^+-u^-$.
\item $\lambda_1\ge 0$ and $\lambda_2>0$
are given constants.
\item $\Omega^+(u):=\{x\in \Om: u(x)>0\}$ and 
$\Om^-(u):=\{x\in \Om: u(x)<0\}$ are the positivity and the negativity sets
of~$u$, respectively,
\item $\varpi>0$ is the constant providing a lower bound for the 
weighted volume of~$\Omega^+(u)$.
\item $\Gamma=\fb u$ is the free boundary.
\item The open balls are denoted by $B_r(x_0):=\{x\in \R^n {\mbox{ s.t. }} 
|x-x_0|<r\}$ and $B_r:=B_r(0)$.
\item $C^{0,1}_{loc}(\Omega)$ is the 
class of locally Lipschitz continuous functions in $\Om$.
\item The mean value integral is $\fint_E f=\frac1{\mathcal L^n(E)}\int_E f$.
\item Various universal constants are often denoted by $C$, for simplicity. 
\end{itemize}

\section{Existence and basic properties of minimizers}\label{sec-exist}

In this section we prove that there exists~$u\in \mathcal A$ 
minimizing~\eqref{EN:FUNCT}, 
where $\mathcal A$ is defined in \eqref{class}. 
We also 
show that under condition \eqref{LAMBDA12}, 
$u$ is globally subharmonic in $\Omega$.

\subsection{Existence of minimizers}

\begin{lem}\label{LE:EXI}
Fix~$
\bar u\in W^{1, 2}(\Om)$.
Then, there exists~$u\in\A$ such that
$$ J[u]\le J[v]$$
for any~$v\in\A$.
\end{lem}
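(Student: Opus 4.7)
The plan is to apply the direct method of the calculus of variations. Since $\bar u\in\mathcal A$ yields $J[\bar u]<+\infty$, the infimum $m:=\inf_{\mathcal A}J$ is finite, so I can fix a minimizing sequence $\{u_k\}\subset\mathcal A$ with $J[u_k]\to m$. As $\Phi_0\ge 0$, the energy bound gives $\|\nabla u_k\|_{L^2(\Omega)}\le C$; combined with $u_k-\bar u\in W^{1,2}_0(\Omega)$ and the Poincar\'e inequality, this yields a uniform $W^{1,2}(\Omega)$ bound. Passing to a subsequence (not relabeled) via Banach--Alaoglu and Rellich--Kondrachov, I may assume $u_k\rightharpoonup u$ in $W^{1,2}(\Omega)$, $u_k\to u$ in $L^2(\Omega)$, and $u_k\to u$ a.e.\ in $\Omega$; weak closedness of $W^{1,2}_0(\Omega)$ then forces $u\in\mathcal A$, so it remains to prove weak lower semicontinuity of $J$ along this sequence.

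The Dirichlet part is handled by the classical weak lower semicontinuity of $v\mapsto\int_\Omega|\nabla v|^2$. For the nonlocal term, pointwise convergence $u_k\to u$ a.e.\ gives
$$\liminf_{k\to\infty}\chi_{\{u_k>0\}}(x)\ge\chi_{\{u>0\}}(x)\quad\text{for a.e.\ }x\in\Omega,$$
since $u(x)>0$ forces $u_k(x)>0$ eventually. The bounds $Q_1\le Q\le Q_2$ combined with Fatou's lemma then give $\liminf_k\mathcal M_2(u_k)\ge\mathcal M_2(u)$. I would next invoke the structural assumption \eqref{LAMBDA12} together with the continuity of $\Phi_0$: along any subsequence realising $\liminf_k\Phi_0(\mathcal M_2(u_k))$, I can further extract so that $\mathcal M_2(u_k)\to\beta$ for some $\beta\ge\mathcal M_2(u)$, and monotonicity of $\Phi_0$ on $(0,\lambda_2\lambda_\Omega)$ yields $\Phi_0(\beta)\ge\Phi_0(\mathcal M_2(u))$. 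Summing the two semicontinuity bounds gives $\liminf_k J[u_k]\ge J[u]$, so $u$ realises the infimum.

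The genuine difficulty is that $v\mapsto\chi_{\{v>0\}}$ fails to be upper semicontinuous under weak convergence, so only the one-sided Fatou estimate above is at our disposal; this is precisely where the monotonicity hypothesis \eqref{LAMBDA12} is indispensable, because without it the composition $\Phi_0\circ\mathcal M_2$ need not be lower semicontinuous (consistent with the non-existence examples to be exhibited in Section~\ref{sec:non}). The Egoroff-type refinement advertised in the introduction is the natural tool to quantify the behaviour near the degenerate level set $\{u=0\}$: upgrading a.e.\ convergence to near-uniform convergence outside a small exceptional set gives genuine two-sided convergence of $\chi_{\{u_k>0\}}\to\chi_{\{u>0\}}$ on $\{u\ne 0\}$, which in turn makes the passage to the limit in the nonlocal term rigorous and avoids any delicate interaction between the Sobolev convergence and the distributional structure of $\{u=0\}$.
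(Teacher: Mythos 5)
Your proof is correct and follows the same direct-method skeleton as the paper: minimizing sequence, Poincar\'e and weak compactness, weak closedness of $\bar u+W^{1,2}_0(\Omega)$, weak lower semicontinuity of the Dirichlet term, the one-sided bound $\liminf_k\mathcal M_2(u_k)\ge\mathcal M_2(u)$, and then monotonicity (assumption \eqref{LAMBDA12}) plus continuity of $\Phi_0$ on $[0,\lambda_2\lambda_\Omega]$ to pass to the limit in the volume term. The only divergence is in how that one-sided bound is obtained: you use the pointwise observation that a.e.\ convergence gives $\liminf_k\chi_{\{u_k>0\}}\ge\chi_{\{u>0\}}$ a.e.\ together with Fatou's lemma, whereas the paper invokes the refinement of Egoroff's theorem for $W^{1,2}$ functions (uniform convergence off a set of small $2$-capacity) and then lets the exceptional set shrink. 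Your route is in fact the more elementary one (it is the classical Alt--Caffarelli argument) and is fully rigorous as written, since only lower semicontinuity of $\mathcal M_2$ is needed once $\Phi_0$ is nondecreasing; the capacitary Egoroff refinement buys quasi-uniform, two-sided control of $\chi_{\{u_k>0\}}$ away from $\{u=0\}$, but that extra information is not used in this lemma. Accordingly, your closing paragraph slightly overstates matters: the Egoroff-type upgrade is not needed to make the passage to the limit rigorous, and your Fatou argument already closes the proof.
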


\begin{proof} The proof
is a standard lower semicontinuity argument
(we give the details for the facility of the reader). We notice that
$$ {\mathcal{M}}(u_0)\le(\lambda_1+\lambda_2)\,Q_2\,{\mathcal{L}}^n(\Omega)<+\infty$$
and so~$J[\bar u]
<+\infty$. Now,
let~$u_k\in\A$ be a minimizing sequence.
We observe that~$\bar u\in \A$, hence for sufficiently large $k$ we may suppose that 
\begin{equation}\label{JU}
J[u_k]\le J[\bar u]<+\infty.\end{equation}
Set~$v_k:=u_k-\bar u\in W^{1,2}_0(\Om)$.
As a consequence of~\eqref{JU}, we have that
$$ \sup_{k\in\N} \| \nabla v_k\|_{L^2(\Omega,\R^n)} < +\infty,$$
and so, by Poincar\'e inequality, also
$$ \sup_{k\in\N} \| v_k\|_{L^2(\Omega)}<+\infty.$$
Therefore, up to a subsequence,~$v_k$
converges weakly to some~$v\in W^{1,2}_0(\Omega)$,
strongly in~$L^2(\Omega)$ and a.e. in~$\Om$.
Then, $\nabla v_k$ converges weakly to~$\nabla v$
in~$L^2(\Omega,\R^n)$. So, if we set~$u:=v+\bar u$,
we have that~$u\in\A$, $u_k\to u$ in~$L^2(\Omega)$ and a.e. in~$\Om$,
and~$\nabla u_k\to\nabla u$ weakly in~$L^2(\Omega,\R^n)$.
In particular,
\begin{equation}\label{KAH}
\liminf_{k\to+\infty} \int_\Om |\nabla u_k|^2 \ge \int_\Om |\nabla u|^2 .\end{equation}
Now we observe that
\begin{equation}\label{LAKTGAH}
\liminf_{k\to+\infty}\int_\Omega \lambda_2 Q(x)\,\chi_{\{ u_k>0 \}}(x)\,dx
\ge\int_{\Omega\setminus E_\e}
\lambda_2 Q(x)\,\chi_{\{ u>0 \}}(x)\,dx.
\end{equation}
For this, let $\e>0$ be fixed. Using the refinement of Egoroff's theorem for 
$W^{1,2}$ functions, it follows that there exists a subset $E_\e\subset \Omega$
such that $u_k\to u$ uniformly in $\Omega\setminus E_\e$ and ${\rm{cap}}_2(E_\e)<\e$
where $\rm{cap}_2$ is the 2-capacity. Thus
\[\liminf_{k\to+\infty}\int_\Omega \lambda_2 Q(x)\,\chi_{\{ u_k>0 \}}(x)\,dx
\ge\int_{\Omega\setminus E_\e}
\lambda_2 Q(x)\,\chi_{\{ u>0 \}}(x)\,dx-
\lambda_2Q_2|E_\e|.\]
Sending $\e\to 0$ the result in~\eqref{LAKTGAH} follows.

{F}rom~\eqref{LAKTGAH}, we deduce that
\begin{equation}\label{aUJAKA1qw1}\liminf_{k\to+\infty}
{\mathcal{M}}_2(u_k)=
\liminf_{k\to+\infty}\int_\Omega
\lambda_2 Q(x)\,\chi_{\{ u_k>0 \}}(x)\,dx
\ge\int_\Omega
\lambda_2 Q(x)\,\chi_{\{ u>0 \}}(x)\,dx = {\mathcal{M}}_2(u).\end{equation}
Furthermore, by~\eqref{UNASO},
$$ 0\le{\mathcal{M}}_1(u)
=\lambda_1\,\big(\lambda_\Omega-\lambda_{2}^{-1}{\mathcal{M}}_2(u)\big)$$
and therefore~${\mathcal{M}}_2(u) \in [0,\,\lambda_2\lambda_\Omega]$
and, similarly,~${\mathcal{M}}_2(u_k) \in [0,\,\lambda_2\lambda_\Omega]$.

Therefore, since, by~\eqref{LAMBDA12}, the function~$\Phi_0$
is nondecreasing in~$[0,\,\lambda_2\lambda_\Omega]$, we deduce from~\eqref{aUJAKA1qw1}
that
$$ \liminf_{k\to+\infty}\Phi_0\big( 
{\mathcal{M}}_2(u_k)\big) =
\Phi_0\left( \liminf_{k\to+\infty}
{\mathcal{M}}_2(u_k)\right) \ge \Phi_0\big({\mathcal{M}}_2(u) \big).$$
This and~\eqref{KAH} give that
$$ \liminf_{k\to+\infty} J[u_k]\ge J[u],$$
hence~$u$ is the desired minimizer.
\end{proof}

\subsection{Euler-Lagrange equations} 
We now state the basic properties of the minimizers.  The starting point is 
to derive the differential inequalities that the minimizers satisfy in $\Omega$.

\begin{lem}\label{LEMMA2}
Let~$u$ be a minimizer in~$\Omega$ for the functional~$J$
in~\eqref{EN:FUNCT} and suppose that~\eqref{LAMBDA12} holds true. 
Then~$u$ is subharmonic. 
\end{lem}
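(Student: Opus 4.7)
The plan is to use a standard one-sided perturbation argument, exploiting the monotonicity of $\Phi_0$ provided by \eqref{LAMBDA12} to control the nonlinear volume term for free.

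First I would fix a test function $\varphi \in C_c^\infty(\Omega)$ with $\varphi \ge 0$, and for $\epsilon > 0$ consider the competitor $v_\epsilon := u - \epsilon \varphi$. Clearly $v_\epsilon - \bar u \in W^{1,2}_0(\Omega)$, so $v_\epsilon \in \mathcal{A}$. The key point is that $v_\epsilon \le u$ pointwise, which forces the inclusion $\{v_\epsilon > 0\} \subseteq \{u > 0\}$, and hence, since $Q \ge 0$,
\[
\mathcal{M}_2(v_\epsilon) = \lambda_2 \int_\Omega Q(x)\,\chi_{\{v_\epsilon>0\}}(x)\,dx \le \lambda_2 \int_\Omega Q(x)\,\chi_{\{u>0\}}(x)\,dx = \mathcal{M}_2(u).
\]
Both values lie in $[0, \lambda_2 \lambda_\Omega]$, and assumption \eqref{LAMBDA12} says $\Phi_0$ is nondecreasing on this interval, so $\Phi_0(\mathcal{M}_2(v_\epsilon)) \le \Phi_0(\mathcal{M}_2(u))$.

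Next I would invoke the minimality $J[u] \le J[v_\epsilon]$, which together with the previous inequality yields
\[
\int_\Omega |\nabla u|^2 \,dx \le \int_\Omega |\nabla u - \epsilon \nabla \varphi|^2 \,dx = \int_\Omega |\nabla u|^2 \,dx - 2\epsilon \int_\Omega \nabla u \cdot \nabla \varphi \,dx + \epsilon^2 \int_\Omega |\nabla \varphi|^2 \,dx.
\]
Rearranging, dividing by $2\epsilon$, and letting $\epsilon \to 0^+$ gives $\int_\Omega \nabla u \cdot \nabla \varphi \,dx \le 0$ for every nonnegative $\varphi \in C_c^\infty(\Omega)$, which is precisely the weak statement $\Delta u \ge 0$ in $\Omega$, i.e., subharmonicity.

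There is no real obstacle here: the only subtle step is verifying that the volume term behaves monotonically under downward perturbations, which is exactly what \eqref{LAMBDA12} was designed to encode (note that this argument would fail for upward perturbations $u + \epsilon \varphi$, which is why one obtains only one-sided information and hence only subharmonicity rather than harmonicity at this stage; harmonicity inside $\Omega^+(u)$ will need a separate two-sided perturbation argument staying inside the positivity set, as in Corollary~\ref{COR:5}).
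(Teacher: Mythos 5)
Your proof is correct, and it reaches the conclusion by a slightly different route than the paper. The paper's proof takes a ball $B\Subset\Omega$, lets $v$ be the harmonic replacement of $u$ in $B$, and compares $u$ with the truncation $w:=\min\{u,v\}$; since $w\le u$ forces $\mathcal{M}_2(w)\le\mathcal{M}_2(u)$, condition \eqref{LAMBDA12} kills the volume term, and the algebraic identity $\int_B\big(|\nabla u|^2-|\nabla w|^2\big)=\int_B|\nabla z|^2$ with $z:=\max\{u-v,0\}$ then forces $z\equiv0$, i.e.\ $u\le v$ on every ball, which is the comparison-with-harmonic-replacement formulation of subharmonicity. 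You instead use the competitor $u-\epsilon\varphi$ with $\varphi\ge0$ smooth and compactly supported: the same monotonicity mechanism (a competitor $\le u$ cannot enlarge $\{u>0\}$, so \eqref{LAMBDA12} makes the $\Phi_0$ term nonincreasing) reduces matters to the Dirichlet energy, and dividing by $\epsilon$ yields $\int_\Omega\nabla u\cdot\nabla\varphi\le0$, i.e.\ $\Delta u\ge0$ in the distributional sense. Your version is a bit more elementary (no harmonic replacement, no manipulation of the energy difference), and your remark about why upward perturbations fail correctly identifies the one-sided nature of the argument. The only point worth flagging is that the paper's formulation ($u$ is below its harmonic replacement, hence satisfies the sub-mean-value inequality) is exactly what is invoked later, e.g.\ in Lemma~\ref{VAVA} and in the sharp separation of phases; from your distributional inequality one recovers those pointwise statements by the standard equivalence for $W^{1,2}$ functions (passing to the precise representative, or simply using the continuity of $u$ once it is established), so this is a matter of bookkeeping rather than a gap.
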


\begin{proof} We use some classical ideas in
Lemma~2.2 of~\cite{AC} and Theorem~2.3 in~\cite{ACF}, combining them here
with condition~\eqref{LAMBDA12}. For this, we consider a ball~$B\Subset\Omega$
and the function~$v$ which is harmonic in~$B$ and coincides with~$u$
in~$\Omega\setminus B$. We also take~$w:=\min\{u,v\}$.
Then, $w$ is an admissible competitor for~$u$ and therefore~$J[u]\le J[w]$, that is
\begin{equation} \label{PP1al}
I:=\int_{B}|\na u(x)|^2\,dx-
\int_{B}|\na w(x)|^2\,dx\le \Phi_0\big({\mathcal{M}}_2(w)\big)
-\Phi_0\big({\mathcal{M}}_2(u)\big).\end{equation}
On the other hand, if we set~$z:=\max\{ u-v,0\}$, we have that
\begin{eqnarray*}
&& I = \int_{B} \big(\na (u-w)(x)\big)\cdot
\big(\na (u+w)(x)\big)\,dx=\int_{B\cap\{ u>v\}} 
\big(\na (u-v)(x)\big)\cdot\big(\na (u+v)(x)\big)\,dx\\
&&\qquad =
\int_{B\cap\{ u>v\}} \big|\na (u-v)(x)\big|^2\,dx
+2\int_{B\cap\{ u>v\}} \big(\na (u-v)(x)\big)\cdot\nabla v(x)\,dx\\ &&\qquad= 
\int_{B\cap\{ u>v\}} \big|\na (u-v)(x)\big|^2\,dx=\int_B |\nabla z(x)|^2\,dx.
\end{eqnarray*}
Inserting this into~\eqref{PP1al}, we obtain that
\begin{equation} \label{PP1al:2}
\int_B |\nabla z(x)|^2\,dx
\le \Phi_0\big({\mathcal{M}}_2(w)\big)
-\Phi_0\big({\mathcal{M}}_2(u)\big).\end{equation}
Moreover, we have that~$w\le u$ and therefore~$\chi_{\{w>0\}}\le
\chi_{\{u>0\}}$. Accordingly, ${\mathcal{M}}_2(w)\le{\mathcal{M}}_2(u)$
and then, in light of~\eqref{LAMBDA12}, we obtain that~$\Phi_0\big({\mathcal{M}}_2(w)\big)
\le\Phi_0\big({\mathcal{M}}_2(u)\big)$. {F}rom this and~\eqref{PP1al:2} we deduce that~$z$
is constant in~$B$. Since~$z$ vanishes in~$\Omega\setminus B$, we
conclude that~$z$ vanishes in~$\Omega$ and therefore that~$u\le v$, which
establishes the desired result.
\end{proof}

\section{Non existence of minimizers for irregular nonlinearities}\label{sec:non}

In this section,
we observe that when the regularity and the structural assumptions
on~$\Phi_0$ are violated, minimizers may not exist.
To exhibit this phenomenon in an explicit and concrete
example, we consider the case in which~$\Omega:=(0,1)\subset\R$,
$\lambda_2:=Q:=1$,
$\bar u(x):=x$ for any~$x\in[0,1]$,
and
$$ \Phi(r_1,r_2) := \left\{
\begin{matrix}
r_2 & {\mbox{ if $r_2\in\left[0,\,\frac12\right]$, }}\\
&\\
\frac{5-2r_2}{8} & {\mbox{ if $r_2\in\left(\frac12,\,1\right)$, }}\\ & \\
1 & {\mbox{ if $r_2\in[1,+\infty)$. }}
\end{matrix}
\right. $$
Notice that, with this setting,
\begin{equation}\label{0sd8}
\Phi_0(r)= \left\{
\begin{matrix}
r & {\mbox{ if $ r\in\left[0,\,\frac12\right]$, }}\\ & \\
\frac{5-2r}{8} & {\mbox{ if $r\in\left(\frac12,\,1\right)$, }}\\ &\\
1 & {\mbox{ if $r\in[1,+\infty)$. }}
\end{matrix}
\right. \end{equation}
For this choice of $\Phi_0$, \emph{there exists no minimizer}~$u^*$ for the energy
functional in~\eqref{EN:FUNCT} with the condition 
that~$u^*-\bar u\in W^{1,2}_0(\Omega)$.

To see this, let us suppose, by contradiction, that such minimizer exists.
Then,
\begin{equation}\label{909051}
\int_\Omega |\dot u^*|^2 \le J[u^*]\le J[\bar u] = 1+ \Phi_0(1)=2.
\end{equation}
As a consequence,
\begin{equation*}
1-0=u^*(1)-u^*(0)=\int_{ (0,1)\cap\{u^*>0\}} \dot u^*\le
\sqrt{ \int_\Omega |\dot u^*|^2 } \,\sqrt{ {\mathcal{L}}^1
\big((0,1)\cap\{u^*>0\}\big) }\le \sqrt{ 2} \, \sqrt{ {\mathcal{M}}_2(u^*) }
\end{equation*}
and so
\begin{equation}\label{FR12b}
{\mathcal{M}}_2(u^*)\ge\frac12.
\end{equation}
We claim that
\begin{equation} \label{M091}
{\mbox{$\{u^*=0\}$ has positive measure.}}\end{equation}
To check this, we argue by contradiction and assume
that~${\mathcal{L}}^1( (0,1)\cap\{u^*=0\} )=0$, hence~$
{\mathcal{M}}_2(u^*)=1$. Consequently, since~$\bar u$ is
a minimizer for the Dirichlet energy in~$(0,1)$, we find that
\begin{equation}\label{MC0}
J[u^*] = \int_0^1 |\dot u^*|^2 + 1\ge
\int_0^1 |\dot{ \bar u}|^2 + 1 = 2.
\end{equation}
Now we define, for any~$\delta\in\left(0,\frac12\right)$,
$$ u_\delta(x):=
\left\{
\begin{matrix}
0 & {\mbox{ if $x\in[0,\,\delta]$, }}\\ &\\
\frac{x-\delta}{1-\delta}
& {\mbox{ if $x\in(\delta,\,1]$.}}
\end{matrix}
\right. $$
Then, there holds 
$$ J[u^*]\le J[u_\delta]
=\int_\delta^1 \left|\frac{1}{1-\delta}\right|^2 +\Phi_0 (1-\delta)
= \frac{1}{1-\delta}+ \frac{5-2(1-\delta)}{8}.$$
Accordingly, by taking~$\delta$ as small as we wish, we obtain
$$ J[u^*]\le 1 +\frac38.$$
This inequality is in contradiction with~\eqref{MC0}
and so it proves~\eqref{M091}.

In particular, from~\eqref{M091},
we can take a Lebesgue point~$p\in(0,1)$ for~$\{u^*=0\}$.
Thus, if~$\e>0$ is sufficiently small, we have that
\begin{equation}\label{MI01}
{\mathcal{L}}^1 \Big(
(p-\e,\,p+\e) \cap \{u^*=0\}\Big) \ge \e.\end{equation}
For small~$\e>0$, we can also suppose that~$(p-\e,\,p+\e)\subset(0,1)$.

Now we take~$\varphi\in C^\infty_0 ([-1,1])$
with~$\varphi>0$ in~$(-1,1)$ and~$|\dot\varphi|\le1$.
For any~$\e>0$, we define~$\varphi_\e(x):= \varphi\left( \frac{x-p}\e \right)$
and we remark that
\begin{equation}\label{01902}
\int_{p-\e}^{p+\e} |\dot \varphi_\e|^2 \le \frac{2}{\e}
.\end{equation}
Let also
$$ u_\e(x):= u^*(x)+\e^4 \varphi_\e(x).$$
Notice that~$u_\e\ge u^*$, and
\begin{equation}\label{dwuo22d}
\begin{split}
& {\mathcal{M}}_2(u_\e)-{\mathcal{M}}_2(u^*)=
{\mathcal{L}}^1 \big((0,1)\cap\{u_\e>0\}\big)- 
{\mathcal{L}}^1 \big((0,1)\cap\{u^*>0\}\big) \\
&\qquad\qquad= {\mathcal{L}}^1
\big(\{x\in (p-\e,\,p+\e) {\mbox{ s.t. }} u^*(x)=0\}\big)
\in[\e,\,2\e],
\end{split}\end{equation}
thanks to~\eqref{MI01}.
Notice also that, in view of~\eqref{FR12b} and~\eqref{M091},
$$ {\mathcal{M}}_2(u^*)\in \left[\frac12, \,1\right)$$
and so, if~$\e>0$ is sufficiently small, we deduce from~\eqref{dwuo22d} that also
$$ {\mathcal{M}}_2(u_\e)\in \left[\frac12, \,1\right).$$
Therefore, by~\eqref{0sd8} and~\eqref{dwuo22d},
\begin{equation}\label{0d61aqa1}
\Phi_0 \big({\mathcal{M}}_2(u_\e)\big)-\Phi_0\big({\mathcal{M}}_2(u^*)\big)=
-\frac{{\mathcal{M}}_2(u_\e)-{\mathcal{M}}_2(u^*)}{4}
\le -\frac{\e}4.
\end{equation}
On the other hand, recalling~\eqref{909051} and~\eqref{01902},
\begin{eqnarray*}
&& \int_\Omega |\dot u_\e|^2-
\int_\Omega |\dot u^*|^2 =
\int_{p-\e}^{p+\e} \big( 2\e^4\dot u^*\, \dot\varphi_\e+\e^8
|\dot\varphi_\e|^2\big) \\
&&\qquad\le 2\e^4 \,\sqrt{ \int_\Omega |\dot u^*|^2 }
\,\sqrt{ \int_{p-\e}^{p+\e} |\dot \varphi_\e|^2 }
+\e^8\int_{p-\e}^{p+\e}|\dot\varphi_\e|^2
\le 2\e^4 \,\sqrt{ 2}
\, \sqrt{ \frac{2}{\e} }
+2\e^7\le \e^3,
\end{eqnarray*}
as soon as~$\e>0$ is small enough. Using this and~\eqref{0d61aqa1},
we obtain that
$$ J[u_\e]-J[u^*] \le -\frac{\e}4 + \e^3 <0$$
if~$\e>0$ is small enough, which is a contradiction with the minimality
of~$u^*$. This shows that no minimizer exists in this case.

\section{Irregular free boundaries}\label{sec:irregular}

In this section, we would like to remark that if~$\Phi_0$
is not monotone, then there may exist minimizers 
whose free boundary is not regular, even in dimension~$2$
(therefore, the result in Theorem~\ref{thm:reg}
cannot be generalized to nonlinear problems
for which~$\Phi_0$ is not monotone).

To make an explicit example, we consider the case in which~$n=2$, $\Om:=B_1\subset\R^2$,
$\lambda_1:=\lambda_2:=Q:=1$ and
$\bar u(x):=x_1 x_2$. We also
define
\begin{eqnarray*}
&& c_1:= \int_{\partial B_1} \bar u^+,\\
&& c_2:=\frac{c_1}{2} \left[ \int_{B_1}|\nabla \bar u|^2+1\right]^{-1},\\
&& c_3:= 2+\frac{1}{4c_2}\\
{\mbox{and }} && c_\star := \min\left\{ \frac\pi4, \,\frac{c_1}{2c_3}\right\}.\end{eqnarray*}
We remark that~$c_\star<\pi/2$.
We consider a smooth function~$\phi_\star:[0,+\infty)\to[0,+\infty)$
with~$\phi_\star(0)=0$, $\phi_\star(r)>0$ for any~$r>0$, $\phi_\star(\pi/4)=2$ and
\begin{equation}\label{90Ka34zxA}
1=\phi_\star(\pi/2)=\min_{[c_\star,\,+\infty)} \phi_\star.
\end{equation}
Let also~$\Phi(r_1,r_2):=\phi_\star(r_2)$.
In this way, we have that~$\Phi_0(r)=\phi_\star(r)$
and we observe that all our structural assumptions on~$\Phi_0$ are satisfied in this case,
except the monotonicity.

We will show that
\begin{equation}\label{GOAL:J}
J[\bar u]=\min_{ u-\bar u\in W^{1,2}_0(B_1) } J[u],
\end{equation}
hence~$\bar u$ is a minimizer for~$J$ in~$B_1$ with respect to its own boundary values.
Interestingly, the set~$\{\bar u>0\}$ is in this case a singular cone,
which shows that the monotonicity assumption on~$\Phi_0$ cannot be dropped if one
wishes to prove that the free boundary of minimizers in the plane is smooth.

To prove~\eqref{GOAL:J}, we
argue as follows. Let~$u$ be such that~$u-\bar u\in W^{1,2}_0(B_1)$ and set~$v:=\min\{u,1\}$.
Then, $v-\bar u\in W^{1,2}_0(B_1)$, therefore~$v=\bar u$ along~$\partial B_1$ and thus,
if $\nu$ is the exterior normal of~$B_1$,
\begin{equation}\label{p0q7er}
\begin{split}
&c_1 = \int_{\partial B_1} \bar u^+ = \int_{\partial B_1} v^+=
\int_{\partial B_1} v^+ x\cdot\nu=
\int_{B_1} {\rm div}\, (v^+ x)
= \int_{B_1} (2v^+ +\nabla v^+\cdot x)
\\ &\qquad\qquad\le 2{\mathcal{L}}^2 (B_1\cap \{v>0\})+
\int_{B_1} |\nabla v^+|.
\end{split}
\end{equation}
Now we observe that
\begin{eqnarray*}
&&\int_{B_1}|\nabla v^+| = \int_{B_1} 2\cdot\frac{1}{2\sqrt{c_2}}\cdot\sqrt{c_2}\,|\nabla v^+|
\le \int_{B_1\cap\{v>0\}} \left( \frac{1}{4c_2} + c_2|\nabla v|^2\right)
\\ &&\qquad\le
\frac{1}{4c_2}\,{\mathcal{L}}^2 (B_1\cap \{v>0\})+
c_2 \int_{B_1} |\nabla u|^2 \le (c_3-2)\,{\mathcal{L}}^2 (B_1\cap \{v>0\})+c_2\,J[u].
\end{eqnarray*}
By plugging this information into~\eqref{p0q7er} and recalling that~$\{v>0\}=\{u>0\}$,
we obtain
$$ c_1\le c_3\,{\mathcal{L}}^2 (B_1\cap \{u>0\})+c_2\,J[u].$$
This implies that
either
\begin{equation}\label{p0q7er:1}
c_3\,{\mathcal{L}}^2 (B_1\cap \{u>0\})\ge \frac{c_1}{2}
\end{equation}
or
\begin{equation}\label{p0q7er:2}
c_2\,J[u]\ge \frac{c_1}{2}.
\end{equation}
If~\eqref{p0q7er:1} is satisfied, then
$$ {\mathcal{M}}_2(u)={\mathcal{L}}^2 (B_1\cap \{u>0\})\ge \frac{c_1}{2c_3}\ge
c_\star.$$ Consequently, by~\eqref{90Ka34zxA} and using that~$\pi={\mathcal{L}}^2(B_1)$,
we have
$$ \Phi_0 ( {\mathcal{M}}_2(u) )\ge 1=
\Phi_0(\pi/2) =\Phi_0 ( {\mathcal{M}}_2(\bar u) ).$$
Therefore, since~$\bar u$ is harmonic, we conclude that~$ J[u]\ge J[\bar u]$.
This proves~\eqref{GOAL:J} in this case, and we now consider the case in which~\eqref{p0q7er:2} holds true.

In this setting, we have that
$$ J[u]\ge \frac{c_1}{2 c_2}=\int_{B_1}|\nabla \bar u|^2+1=
\int_{B_1}|\nabla \bar u|^2+\Phi_0(\pi/2)=
J[\bar u].$$
This proves~\eqref{GOAL:J}, as\footnote{Let us remark
that the counterexamples discussed here
are based on the intuition that when~$\Phi_0'$ vanishes
at a minimizer, then the problem is related to
that of harmonic functions, and so the regularity
of the free boundary may be violated by looking at
harmonic functions
with irregular level sets.
Notice also that the condition~$\Phi_0'<0$ reduces to~$\Phi_0'>0$
by replacing $u$ by~$-u$.
We think that it is a very interesting problem
to further investigate the cases in which~$\Phi_0$
is not differentiable, or not continuous.}
desired.

\section{BMO gradient estimates and Lipschitz continuity of the minimizers}\label{sec:BMO}

In this section, we will prove that minimizers have gradient which is
locally in BMO and, as a consequence, we obtain an estimate for the integral averages of 
the minimizers. This method is structurally quite different from
the classical techniques in~\cite{AC, ACF}, which obtain
Lipschitz estimates in the linear case without using BMO estimates
on the gradient of the solution.

{F}rom now on we assume that~\eqref{LAMBDA12} and~\eqref{VARPI} hold true.

We remark that
condition~\eqref{VARPI} is satisfied in all nontrivial cases,
and then it links~$\varpi$ to quantities only depending on~$\Om$ and~$u_0$.
More precisely, condition~\eqref{VARPI}
is satisfied provided that~$\bar u^+=u^+\big|_{\partial\Omega}$ has some positive mass along the boundary
(and when this does not happen, the positive phase of the minimizer is trivial).
Indeed, we have the following observation:

\begin{lem}\label{VAVA}
Let~$u$ be a minimizer in~$\Omega$ for the functional~$J$
in~\eqref{EN:FUNCT}. Assume that~$\Omega$ has Lipschitz boundary and
that~$\bar u^+$ has some positive mass along~$\partial\Omega$.
Then~\eqref{VARPI} is satisfied, with
$$ \varpi:= 
\frac{1}{\displaystyle
C_\Omega \left( C_\Omega\,J[\bar u]
+2\|\bar u^+\|_{L^\infty(\partial\Omega)} \int_{\partial\Omega} \bar u^+\right)
}\,\left(\int_{\partial\Omega} \bar u^+\right)^2,$$
where~$C_\Omega>0$ is the trace constant for the domain~$\Omega$
for the embedding of~$W^{1,1}(\Omega)$ into~$L^1(\partial\Omega)$.
\end{lem}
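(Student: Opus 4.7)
The plan is to combine the weak maximum principle (available thanks to the subharmonicity established in Lemma~\ref{LEMMA2}) with the $W^{1,1}$ trace inequality applied to $u^+$. First, I would note that $u-\bar u\in W^{1,2}_0(\Om)$ forces $u=\bar u\leq\|\bar u^+\|_{L^\infty(\partial\Om)}=:M$ on $\partial\Om$ in the trace sense. Testing the weak subharmonicity of $u$ against $(u-M)^+\in W^{1,2}_0(\Om)$ yields $\int_\Om|\na(u-M)^+|^2\leq 0$, hence $u\leq M$ almost everywhere in $\Om$; in particular $u^+\leq M\,\chi_{\Om^+(u)}$.

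Next, I would apply the trace inequality for $u^+\in W^{1,1}(\Om)$, whose boundary trace coincides with $\bar u^+$. Setting $m:=\mathcal L^n(\Om^+(u))$ and $A:=\int_{\partial\Om}\bar u^+$, and using Cauchy--Schwarz on $\na u^+$ together with the minimality $J[u]\leq J[\bar u]$, one obtains
$$A=\int_{\partial\Om}u^+\leq C_\Om\left(\int_\Om u^+ +\int_\Om|\na u^+|\right)\leq C_\Om\bigl(Mm+\sqrt{m}\,\sqrt{J[\bar u]}\bigr),$$
since $\int_\Om u^+\leq Mm$ by the preceding step, while $\int_\Om|\na u^+|=\int_{\Om^+(u)}|\na u|\leq\sqrt{m}\,\sqrt{\int_\Om|\na u|^2}\leq\sqrt{m}\sqrt{J[\bar u]}$ by Cauchy--Schwarz and minimality.

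Finally, to match the precise denominator appearing in the definition of~$\varpi$, I would apply Young's inequality $\sqrt{m\,J[\bar u]}\leq\tfrac{1}{2}\bigl(\alpha m+J[\bar u]/\alpha\bigr)$ with the calibration $\alpha:=C_\Om J[\bar u]/A$; this shifts the $m$-free contribution to exactly $A/2$, and after rearranging one reads off $A^2\leq C_\Om m\bigl(C_\Om J[\bar u]+2MA\bigr)$, which is the claimed bound $m\geq\varpi$.

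The argument is short, and the only slightly delicate point is ensuring that the weak maximum principle is legitimately available: this relies on the variational subharmonicity of Lemma~\ref{LEMMA2} together with the boundary condition built into the admissible class~$\A$. Everything else is the trace embedding $W^{1,1}(\Om)\hookrightarrow L^1(\partial\Om)$, Cauchy--Schwarz, and a one-parameter Young calibration, so I expect no substantive obstacle beyond the bookkeeping of this final algebraic step.
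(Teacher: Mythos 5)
Your argument is correct and follows essentially the same route as the paper's proof: the $L^\infty$ bound on $u^+$ via subharmonicity (Lemma~\ref{LEMMA2}), the $W^{1,1}(\Om)\hookrightarrow L^1(\partial\Om)$ trace inequality applied to $u^+$, the energy bound $J[u]\le J[\bar u]$, and a calibrated Young inequality producing exactly the stated $\varpi$. The only cosmetic difference is that the paper applies Young pointwise to $|\na u|$ before integrating (with $\eta$ playing the role of your $1/(2\alpha)$), whereas you integrate via Cauchy--Schwarz first; the calibration and the resulting constant are identical.
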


\begin{proof} First of all,
Lemma~\ref{LEMMA2} gives that~$u$ is subharmonic,
hence so is~$u^+$. Therefore
$$ \|u^+\|_{L^\infty(\Omega)}=\|u^+\|_{L^\infty(\partial\Omega)}=
\|\bar u^+\|_{L^\infty(\partial\Omega)}.$$
Moreover, by the minimality of~$u$,
$$ \int_{\Omega^+(u)} |\nabla u|^2\le
\int_{\Omega} |\nabla u|^2\le J[u]\le J[\bar u].$$
Now, let~$\eta>0$, to be chosen appropriately.
By the trace inequality (see e.g.
Theorem~1(ii) on page~258 of~\cite{Evans}) and the observations above, we have that
\begin{eqnarray*}
&&
\int_{\partial\Omega} \bar u^+=
\| u^+\|_{L^1(\partial\Omega)}\le C_\Omega\,\|u^+\|_{W^{1,1}(\Omega)}
\\ &&\qquad= C_\Omega\int_{\Omega^+(u)} \big(|\nabla u|+u\big)
\le C_\Omega \int_{\Omega^+(u)} \left(\eta |\nabla u|^2+
\frac{1}{4\eta}+\|u^+\|_{L^\infty(\Omega)}\right)
\\ &&\qquad\le C_\Omega \left[ \eta\,J[\bar u] + 
\left(\frac{1}{4\eta}+\|\bar u^+\|_{L^\infty(\partial\Omega)}\right)\,{\mathcal{L}}^n
\big(\Om^+(u)\big)\right].
\end{eqnarray*}
Hence, we choose
$$ \eta:=\frac{\displaystyle\int_{\partial\Omega} \bar u^+}{2C_\Omega\,J[\bar u]}$$
and we obtain that
$$ C_\Omega\, \left(
\frac{C_\Omega\,J[\bar u]}{\displaystyle 2\int_{\partial\Omega} \bar u^+}
+\|\bar u^+\|_{L^\infty(\partial\Omega)}\right)\,{\mathcal{L}}^n\big(\Om^+(u)\big)
\ge \frac12\,\int_{\partial\Omega} \bar u^+,$$
which gives the desired result.
\end{proof}

Now we establish the BMO estimate claimed 
in Theorem~\ref{THM:BMO}:

\begin{proof}[Proof of Theorem~\ref{THM:BMO}]
Let~$B_r(x_0)\subseteq
D\Subset\Omega$. Without loss of generality,
we assume that~$u$ does not vanish identically,
hence~$\L^n(\Omega^+(u))>0$. 
We consider here the function~$v\in W^{1,2}(B_r(x_0))$ which solves
\begin{equation}\label{eq v}
\left\{
\begin{array}{lll}
\Delta v=0 &\text{in} \ B_r(x_0),\\
v=u &\text{on}\ \p B_r(x_0).
\end{array}
\right.
\end{equation}
We also extend~$v$ to be equal to~$u$ in~$\Omega\setminus B_r(x_0)$.
Since~$v\in\A$, we have that~$J[u]\le J[v]$ and therefore
\begin{eqnarray}\label{oj:9}
\int_{B_r(x_0)}\big(|\na u|^2-|\na v|^2\big) 
&\le& 
\Phi_0\big( {\mathcal{M}}_2(v)\big)- \Phi_0\big( {\mathcal{M}}_2(u)\big)\\\nonumber 
&=&
\Phi_0 \left(\lambda_2\int_{\Om\setminus B_r(x_0)}Q\I u+\lambda_2
\int_{B_r(x_0)}Q\I v\right)\\\nonumber
&&-\Phi_0 \left(\lambda_2\int_{\Om\setminus B_r(x_0)}Q\I u+\lambda_2\int_{B_r(x_0)}Q\I u\right).
\end{eqnarray}
Now we claim that
\begin{equation}\label{porvv r}
\int_{B_r(x_0)}\big(|\na u|^2-|\na v|^2\big)\le C_\star \,r^n,
\end{equation}
for some~$C_\star>0$ (independent of~$r$). 
To prove this, we first assume that~$r>0$ is so small that
\begin{equation}\label{c star}
\int_{\Omega\setminus B_r(x_0)}\I u \ge \frac{ \L^n(\Omega^+(u)) }{ 2 }
=:c_\star.\end{equation}
We stress that~$c_\star>0$ depends on~$\varpi$ (but not on~$r$).
We also set
$$ C_0:= \sup_{\xi \in [\lambda_2 c_\star, \lambda_2\lambda_\Omega)} \Phi_0'(\xi).$$
Let us  fix~$a\ge \lambda_2 c_\star$ and~$b$, $c\in [0,+\infty)$, 
with~$b\ge c$ and~$a+b<\lambda_2\lambda_\Omega$, then observe that
$$ \Phi_0(a+b) -\Phi_0 (a+c)
=\int_c^b \Phi_0'(a+\tau)\,d\tau\le  C_0\,(b-c)\le C_0\,b.
$$
On the other hand,
if~$a\ge \lambda_2 c_\star$ and~$b$, $c\in [0,+\infty)$, 
with~$b\le c$ and~$a+c<\lambda_2\lambda_\Omega$, then we have that
$$ \Phi_0(a+b) -\Phi_0 (a+c)\le0,$$
due to~\eqref{LAMBDA12}.
Therefore, for any~$a\ge \lambda_2 c_\star$ and~$b$, $c\in [0,+\infty)$, 
with~$a+b$, $a+c<\lambda_2\lambda_\Omega$, we get
$$ \Phi_0(a+b) -\Phi_0 (a+c)\le C_0\,b.$$
Utilizing this inequality with
\begin{equation}\label{NOTE}
\begin{split}
& a := \lambda_2\,\int_{\Omega\setminus B_r(x_0)}Q\I u,\\
& b := \lambda_2\,\int_{B_r(x_0)}Q\I v\\
{\mbox{and }}\quad &
c := \lambda_2\,\int_{B_r(x_0)}Q\I u\end{split}
\end{equation}
yields
\begin{eqnarray*} &&\Phi_0
\left(\lambda_2\int_{\Om\setminus B_r(x_0)}Q\I u+
\lambda_2\int_{B_r(x_0)}Q\I v\right)-\Phi_0
\left(\lambda_2\int_{\Om\setminus B_r(x_0)}Q\I u+
\lambda_2\int_{B_r(x_0)}Q\I u\right)
\\ && \qquad\le C_0\,\lambda_2\,\int_{B_r(x_0)}Q \I v
\le C_\star \,r^n,\end{eqnarray*}
for some~$C_\star>0$ (possibly depending on~$\varpi$ in~\eqref{VARPI}).
Plugging this  into~\eqref{oj:9} we see  that~\eqref{porvv r}
is satisfied
if~$r$ is chosen so small to fulfill~\eqref{c star} (say~$r\in[0,r_0]$).

Now we complete the proof of~\eqref{porvv r} when~$r>r_0$.
In this case, we use the notation 
in~\eqref{NOTE} and we claim that
\begin{equation}\label{NMA}
\Phi_0(a+b)\le \hat C\, (b+1),
\end{equation}
for some~$\hat C>0$, possibly depending on~$r_0$, $Q$, $\lambda_2$ and~$\Om$.
To this goal, we define
$$ a_0:= \lambda_2\int_{\Om\setminus B_{r_0}(x_0)}Q \;\;{\mbox{ and }}\;\;
b_0:=a+b-a_0.$$
Notice that the condition~$r>r_0$ implies that~$a_0\ge a$ and so~$b_0\le b$.
We distinguish two cases, either~$b_0\le0$ or~$b_0>0$.
If~$b_0\le0$, we use~\eqref{LAMBDA12} and we obtain
$$ \Phi_0(a+b)=\Phi_0(a_0+b_0)\le \Phi_0(a_0).$$
This implies~\eqref{NMA} in this case.
If instead~$b_0>0$, we have
$$ \Phi_0(a+b)=\Phi_0(a_0+b_0)=
\int_0^{b_0}\Phi_0'(a_0+\tau)\,d\tau+\Phi_0(a_0)
\le \sup_{\xi\in [a_0,\lambda_2\lambda_\Omega)}\Phi'_0(\xi)\,b_0
+\Phi_0(a_0)\le
\sup_{\xi\in [a_0,\lambda_2\lambda_\Omega)}\Phi'_0(\xi)\,b
+\Phi_0(a_0),$$
which completes the proof of~\eqref{NMA}.

Now we observe that~$ b\le C'\,r^n$,
for some~$C'>0$. This fact, 
together with~\eqref{NMA} and the assumption that~$r>r_0$,
gives that~$\Phi_0(a+b)\le \tilde C\,r^n$.
Using this and
the estimate in~\eqref{oj:9}, we find that
$$\int_{B_r(x_0)}\big(|\na u|^2-|\na v|^2\big)
\le\Phi_0
\left(\lambda_2\int_{\Om\setminus B_r(x_0)}Q\I u+\lambda_2
\int_{B_r(x_0)}Q\I v\right)=\Phi_0(a+b)\le \tilde C\,r^n,
$$
which establishes~\eqref{porvv r} also when~$r>r_0$.

In addition, by~\eqref{eq v},
\begin{eqnarray*}
\int_{B_r(x_0)}\big(|\na u|^2-|\na v|^2\big)=
\int_{B_r(x_0)}\big(|\na u|^2-|\na v|^2 +2\nabla v\cdot\nabla(v-u)\big)
=
\int_{B_r(x_0)}|\na u-\na v|^2.
\end{eqnarray*}
This and~\eqref{porvv r} yield that
\begin{equation}\label{porvv r:2}
\int_{B_r(x_0)} |\na u-\na v|^2\le C_\star \,r^n.
\end{equation}
Now we use some techniques developed in~\cite{DK}.
We recall the notation in~\eqref{recn}
and we observe that, using H\"older inequality,
\begin{equation}\begin{split}\label{3.2bis}
& \int_{B_r(x_0)}|(\na v)_{x_0,r}-(\na u)_{x_0,r}|^2 =
\int_{B_r(x_0)}\left|\frac{1}{\L^n(B_r(x_0))}\left(\int_{B_r(x_0)}\na v-\na u\right)\right|^2 \\
&\quad\qquad\le \frac{1}{\L^n(B_r(x_0))}\left(\int_{B_r(x_0)}|\nabla v-\nabla u|\right)^2
\le \int_{B_r(x_0)}|\nabla v-\nabla u|^2.
\end{split}\end{equation}
Furthermore, we recall
the following Campanato growth  type estimate (see e.g.
Theorem 5.1 in \cite{DiB-M}), valid for any~$0<r<R$,
\begin{equation}\label{Manf-camp-0}
\int_{B_r(x_0)}|\na v-(\na v)_{x_0,r}|^2\le C\,
\left(\frac rR\right)^{n+\alpha}\int_{B_R(x_0)}|\na v-(\na v)_{x_0,R}|^2,
\end{equation}
for suitable~$\alpha\in(0,1)$ and~$C>1$.

Now, using \eqref{3.2bis}
and possibly allowing~$C$ to be a universal constant varying from line to line, we have
\begin{eqnarray*}
&& \int_{B_r(x_0)}|\na u-(\na u)_{x_0,r}|^2\\
&\le& 
C\,\left[ \int_{B_r(x_0)}|\na u-\na v|^2+
\int_{B_r(x_0)}|\na v-(\na v)_{x_0,r}|^2
+\int_{B_r(x_0)}|(\na v)_{x_0,r}-(\na u)_{x_0,r}|^2\right] \\
&\le& 
C\,\left[ \int_{B_r(x_0)}|\na u-\na v|^2+
\int_{B_r(x_0)}|\na v-(\na v)_{x_0,r}|^2\right].\end{eqnarray*}
So, using~\eqref{Manf-camp-0}, 
\begin{equation}\label{11AOJM}
\int_{B_r(x_0)}|\na u-(\na u)_{x_0,r}|^2
\le 
C\,\left[ \int_{B_r(x_0)}|\na u-\na v|^2+
\left(\frac rR\right)^{n+\alpha}\int_{B_R(x_0)}|\na v-(\na v)_{x_0,R}|^2
\right].\end{equation}
Now we remark that
\begin{eqnarray*}
&&
\int_{B_R(x_0)}|\na v-(\na v)_{x_0,R}|^2\\
&\le& C\,\left[ 
\int_{B_R(x_0)}|\na v-\na u|^2+
\int_{B_R(x_0)}|\na u-(\na u)_{x_0,R}|^2
+\int_{B_R(x_0)}|(\na u)_{x_0,R}-(\na v)_{x_0,R}|^2
\right] \\
&\le& C\,\left[ 
\int_{B_R(x_0)}|\na v-\na u|^2+
\int_{B_R(x_0)}|\na u-(\na u)_{x_0,R}|^2\right],
\end{eqnarray*}
where~\eqref{3.2bis} has been used once again.

Let us plug this into~\eqref{11AOJM},
and recall that~$r\le R$. We exploit~\eqref{porvv r:2}, and conclude that 
\begin{eqnarray*}
&&\int_{B_r(x_0)}|\na u-(\na u)_{x_0,r}|^2\\
&\le&
C\,\left[ \int_{B_r(x_0)}|\na u-\na v|^2+
\left(\frac rR\right)^{n+\alpha}
\int_{B_R(x_0)}|\na v-\na u|^2+\left(\frac rR\right)^{n+\alpha}
\int_{B_R(x_0)}|\na u-(\na u)_{x_0,R}|^2
\right]
\\ &\le& 
C\,\left[ \int_{B_R(x_0)}|\na u-\na v|^2+
+\left(\frac rR\right)^{n+\alpha}
\int_{B_R(x_0)}|\na u-(\na u)_{x_0,R}|^2
\right]\\
&\le& CR^n + C\,\left(\frac rR\right)^{n+\alpha}
\int_{B_R(x_0)}|\na u-(\na u)_{x_0,R}|^2
.\end{eqnarray*}
Therefore, defining
$$ \psi(r):=\sup\limits_{t\leq r} \int\limits_{B_t(x_0)}|\na u-(\na u)_{x_0,t}|^2,$$
we have that
$$\psi(r)\le CR^n+C\left(\frac rR\right)^{n+\alpha}\psi(R).$$
Thus, by Lemma~2.1 in Chapter~3  
of~\cite{Giaq}, 
we  conclude that there exist $c>0$ 
and $R_0>0$ such that
$$\psi(r)\leq Cr^n\left(\frac{\psi(R)}{R^n}+1\right)$$
for all $r\le R\le R_0$,
and hence
$$ \int_{B_r(x_0)}|\na u-(\na u)_{x_0,r}|^2 \le C r^n$$
for some tame constant $C>0$.

Therefore, by H\"older inequality,
$$ \fint_{ B_r(x_0)}|\na u-(\na u)_{x_0,r}|
\le \sqrt{ \fint_{ B_r(x_0)}|\na u-(\na u)_{x_0,r}|^2 }\le C,$$
up to renaming constants, as desired.
\end{proof}

Exploiting Theorem~\ref{THM:BMO}, 
we can now prove Corollary~\ref{COR:5}, by arguing as follows:

\begin{proof}[Proof of Corollary~\ref{COR:5}] By
Theorem~\ref{THM:BMO}, we have that~$\nabla u\in L^q_{\rm loc}(\Omega)$,
for any $1<q<+\infty$. 
Hence~$u$ is continuous.
The modulus of continuity $\sigma$ follows as in \cite{Morrey} and \cite{ACF}.
Therefore, $\Om^+(u)$ is open and thus, if~$x_0\in\Om^+(u)$,
there exists~$r>0$ such that~$\overline{B_r(x_0)}\subset \Om^+(u)$.
Consequently,
$$ m:=\min_{\overline{B_r(x_0)} } u >0.$$
Let now~$\phi\in C^\infty_0 (B_r(x_0))$, $\epsilon\in\R$
and~$u_\epsilon:=u+\epsilon\phi$. If~$|\epsilon|< \frac{m}{
1+\|\phi\|_{L^\infty(\R^n)}}$,
we have that~$u_\epsilon>0$ in~$B_r(x_0)$ and so
$$ \Om^+(u_\epsilon)=\Om^+(u).$$
This implies that, for small~$\epsilon$,
$$ 0\le J[u_\epsilon] - J[u] =\int_\Om \big(|\nabla u_\epsilon|^2 
-|\nabla u|^2\big)$$
and therefore
$$ \int_\Om \nabla u\cdot\nabla\phi=0,$$
which shows that~$u$ is harmonic in~$B_r(x_0)$, as desired.

Now we prove~\eqref{KA78}. For this we observe that, by the continuity of~$u$, it follows that
$$ \lim_{r\to 0}\fint_{\partial B_r(x_0)}u=0 \quad \mbox{ for any } 
x_0\in \Gamma.$$
Therefore
\begin{equation}\label{7bis}
\begin{split}
\frac{1}{r^{n-1}} \int_{\partial B_r(x_0)} u\;&=\int_0^r 
\frac{d}{dt} \left(
\frac{1}{t^{n-1}} \int_{\partial B_t(x_0)} u(x)\, 
d{\mathcal{H}}^{n-1}(x)\right) \,dt
\\&=
\int_0^r 
\frac{d}{dt} \left( \int_{\partial B_1} u(x_0+t\omega)\, 
d{\mathcal{H}}^{n-1}(\omega)\right) \,dt\\&= 
\int_0^r\left(
\int_{\partial B_1} \nabla u(x_0+t\omega)\cdot\omega\, d{\mathcal{H}}^{n-1} (\omega) \right)\,dt\\
&=\int_0^r\left( \frac{1}{t^{n}}
\int_{\partial B_t} \nabla u(x_0+\nu)\cdot\nu\,d{\mathcal{H}}^{n-1} (\nu) \right)\,dt
\\&=\int_0^r\left( \frac{1}{t^{n}} \frac{d}{dt}\left(
\int_{B_t} \nabla u(x_0+\nu)\cdot\nu\,d{\mathcal{H}}^{n-1} (\nu)\right) \right)\,dt
\\ &=
\int_0^r\left( \frac{1}{t^{n-1}} \frac{d}{dt}\left(
\int_{B_t(x_0)} \nabla u(x)\cdot\frac{x-x_0}{|x-x_0|}\,d{\mathcal{H}}^{n-1} (x)\right) \right)\,dt
.\end{split}
\end{equation}
Now, 
we notice that
\begin{equation}\label{odd}
\int_{B_\epsilon(x_0)} (\na u)_{x_0,\epsilon}
\cdot\frac{x-x_0}{|x-x_0|}\,dx=0\end{equation}
by odd symmetry, for any~$\epsilon>0$. In consequence of this, we have that
\begin{eqnarray*}\left|
\frac{1}{\epsilon^{n-1}}
\int_{B_\epsilon(x_0)}\nabla u(x)\cdot\frac{x-x_0}{|x-x_0|}\,dx \right|&=&
\left|\frac{1}{\epsilon^{n-1}}\int_{B_\epsilon(x_0)}\left(\nabla u(x)-
(\na u)_{x_0,\epsilon}
\right)\cdot\frac{x-x_0}{|x-x_0|}\,dx \right|\\
&\le &\epsilon\left(\frac{1}{\epsilon^{n}}\int_{B_\epsilon(x_0)}\left|
\nabla u(x)- (\na u)_{x_0,\epsilon}\right|\,dx\right)\longrightarrow 0,
\end{eqnarray*}
as~$\epsilon\to0$, thanks to the BMO estimate in Theorem~\ref{THM:BMO}.

Thus, an integration by parts gives that
\begin{eqnarray*}
&& \int_0^r\left( \frac{1}{t^{n-1}} \frac{d}{dt}\left(
\int_{B_t(x_0)} \nabla u(x)\cdot\frac{x-x_0}{|x-x_0|}\,
d{\mathcal{H}}^{n-1} (x)\right) \right)\,dt
\\ &=&
\frac{1}{r^{n-1}} 
\int_{B_r(x_0)} \nabla u(x)\cdot\frac{x-x_0}{|x-x_0|}\,
d{\mathcal{H}}^{n-1} (x)
+(n-1)
\int_0^r \frac{1}{t^{n}}
\int_{B_t(x_0)} \nabla u(x)\cdot\frac{x-x_0}{|x-x_0|}\,
d{\mathcal{H}}^{n-1} (x) \,dt.\end{eqnarray*}
So, recalling~\eqref{7bis}
and using again~\eqref{odd}, we obtain that
\begin{eqnarray*}
\frac{1}{r^{n-1}} \int_{\partial B_r(x_0)} u
&=&
\frac{1}{r^{n-1}}
\int_{B_r(x_0)} \big( \nabla u(x)-(\na u)_{x_0,r}\big)
\cdot\frac{x-x_0}{|x-x_0|}\,
d{\mathcal{H}}^{n-1} (x)
\\ &&\qquad+(n-1)
\int_0^r \frac{1}{t^{n}}
\int_{B_t(x_0)} \big(\nabla u(x)-(\na u)_{x_0,t}\big)
\cdot\frac{x-x_0}{|x-x_0|}\,
d{\mathcal{H}}^{n-1} (x) \,dt.\end{eqnarray*}
Therefore, the BMO estimate in Theorem~\ref{THM:BMO} yields
the desired result in~\eqref{KA78}.\end{proof}

As customary, one can deduce from the integral estimate
in~\eqref{KA78} a linear growth from the free boundary. We give
the details for convenience, starting from the one-phase case:

\begin{cor}\label{COR:6}
Let~$u\ge0$ be a minimizer in~$\Omega$ for the functional~$J$
in~\eqref{EN:FUNCT} and let~$D\Subset\Om$.
Let~$\varpi>0$ and assume that~${\mathcal{L}}^n
\big(\Omega^+(u)\big)\ge\varpi$.

Then, there exists~$C>0$,
possibly depending on~$\varpi$, $Q$, $\Om$ and~$D$, such that
\begin{equation*} 
u(x)\le C\,{\rm dist}(x,\Gamma) ,\end{equation*}
for any~$x\in D$ for which~$B_{ 2{\rm dist}(x,\Gamma) }(x)\subset D$.
\end{cor}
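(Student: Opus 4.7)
The plan is to combine two ingredients that are already at our disposal: the harmonicity (and nonnegativity) of $u$ in $\Omega^+(u)$, which gives a mean-value identity at any point $x$ with $\mathrm{dist}(x,\Gamma)=d$, and the integral bound~\eqref{KA78} from Corollary~\ref{COR:5}, which controls the spherical averages of $u$ around any free boundary point.

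First I would fix an auxiliary subdomain $D'$ with $D \Subset D' \Subset \Omega$ and apply Corollary~\ref{COR:5} in $D'$, yielding a constant $C_1>0$ such that $\bigl|r^{1-n}\int_{\partial B_r(y_0)} u\bigr| \le C_1 r$ for every $y_0 \in \Gamma$ with $B_r(y_0) \subset D'$. Set $\delta_0 := \mathrm{dist}(D, \partial D') > 0$. For $x\in D$ with $B_{2d}(x)\subset D$ and $d:=\mathrm{dist}(x,\Gamma)$, pick any $x_0 \in \Gamma$ realising the distance, i.e.\ $|x-x_0|=d$. Since $u\ge 0$ and $B_d(x) \subset \Omega^+(u)$ (connectedness plus $u$ continuous and strictly positive at $x$), $u$ is harmonic on $B_d(x)$, so the mean value property gives $u(x) = \fint_{B_d(x)} u$. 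Using the elementary containment $B_d(x)\subset B_{2d}(x_0)$ together with $u\ge 0$, I then enlarge to
$$u(x) \;\le\; \frac{1}{|B_d|}\int_{B_{2d}(x_0)} u \;=\; 2^n \fint_{B_{2d}(x_0)} u.$$

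Next, in the regime $d\le \delta_0$, the inclusion $B_d(x_0)\subset B_{2d}(x)\subset D$ together with the $\delta_0$-margin yields $B_{2d}(x_0)\subset D'$, so \eqref{KA78} applies at $x_0$ for every radius $t\in(0,2d]$. Slicing in spherical shells,
$$\int_{B_{2d}(x_0)} u \;=\; \int_0^{2d} \int_{\partial B_t(x_0)} u\,d\mathcal{H}^{n-1}\,dt \;\le\; \int_0^{2d} C_1 t\cdot t^{n-1}\,dt \;=\; \frac{C_1\,(2d)^{n+1}}{n+1}.$$
Combining the last two displays gives $u(x) \le C_3\, d$ with a constant $C_3$ depending only on $\varpi$, $Q$, $\Omega$ and $D$.

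The complementary regime $d>\delta_0$ is handled by the continuity of $u$ from Corollary~\ref{COR:5}: on the compact set $\overline{D}$ we have $u\le M<\infty$, so $u(x)\le M\le (M/\delta_0)\,d$. Taking $C:=\max\{C_3,M/\delta_0\}$ yields the asserted linear growth. The only potentially delicate step is the geometric bookkeeping that guarantees $B_{2d}(x_0)\subset D'$ so that~\eqref{KA78} may be legitimately invoked at radius $2d$; this is a straightforward consequence of the choice of $D'$ and the cutoff $d\le\delta_0$, with the opposite regime disposed of by $L^\infty$-boundedness.
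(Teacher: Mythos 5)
Your argument is correct and is essentially the paper's own proof: the mean value property of the harmonic, nonnegative $u$ on $B_d(x)$, combined with the inclusion $B_d(x)\subseteq B_{2d}(x_0)$ and the integration of the spherical estimate~\eqref{KA78} over radii up to $2d$, gives the linear bound. Your additional bookkeeping with the intermediate domain $D'$ and the splitting into the regimes $d\le\delta_0$ and $d>\delta_0$ is a harmless (indeed slightly more careful) refinement of the same scheme, not a different approach.
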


\begin{proof}
Let~$d$ be the distance of~$x$ to~$\Gamma$.
Let~$x_0\in \overline{ B_{d}(x)}\cap \Gamma$.
Then, we can use~\eqref{KA78} and obtain that, for any~$\rho\in (0,2d)$
$$ \int_{\partial B_\rho(x_0)} u\le C\rho^n,$$
for some~$C>0$. So, we integrate this inequality in~$\rho\in (0,2d)$
and we find that
\begin{equation}\label{FI:1} \int_{B_{2d}(x_0)} u\le Cd^{n+1},\end{equation}
up to renaming~$C>0$.

On the other hand, since $u$ is harmonic in~$B_d(x)$,
thanks to Corollary~\ref{COR:5}, we have that
\begin{equation}\label{FI:2} u(x)= \fint_{B_d(x)} u.\end{equation}
Notice now that~$B_d(x)\subseteq B_{2d}(x_0)$, hence we deduce from~\eqref{FI:1}
and~\eqref{FI:2} that
$u(x)\le Cd$, as desired.
\end{proof}

From Corollary~\ref{COR:6} one can deduce that  that~$u$
is Lipschitz continuous, as stated in the next result for completeness:

\begin{cor}\label{COR:7}
Let~$u\ge0$ be a minimizer in~$\Omega$ for the functional~$J$
in~\eqref{EN:FUNCT} and let~$D\Subset\Om$.
Let~$\varpi>0$ and assume that~${\mathcal{L}}^n
\big(\Omega^+(u)\big)\ge\varpi$.

Then, there exists~$C>0$,
possibly depending on~$\varpi$, $Q$, $\Om$ and~$D$, such that
\begin{equation*} 
\sup_{x\in D} |\nabla u(x)|\le C .\end{equation*}
\end{cor}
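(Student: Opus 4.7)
The strategy is to combine the harmonicity of $u$ on $\Omega^+(u)$, granted by Corollary~\ref{COR:5}, with standard interior derivative estimates for harmonic functions and the linear growth furnished by Corollary~\ref{COR:6}. Since $u\ge 0$, on the open set $\Omega\setminus\overline{\Omega^+(u)}$ the function vanishes identically and so does its gradient; hence the only relevant points are $x\in D\cap\Omega^+(u)$. The plan is to split these points into two regimes depending on their distance to the free boundary.

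To set this up, I would choose an intermediate open set $D'$ with $D\Subset D'\Subset\Omega$ and fix $\delta_0:=\tfrac{1}{4}\mathrm{dist}(D,\partial D')>0$. Corollary~\ref{COR:6} applied on $D'$ yields a constant $C_0>0$ such that $u(y)\le C_0\,\mathrm{dist}(y,\Gamma)$ whenever $y\in D'$ and $B_{2\mathrm{dist}(y,\Gamma)}(y)\subset D'$. Fix $x\in D\cap\Omega^+(u)$ and set $d:=\mathrm{dist}(x,\Gamma)$. In the regime $d\le\delta_0$, the ball $B_d(x)$ lies inside $\Omega^+(u)$, so $u$ is harmonic there. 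Every $y\in B_{d/2}(x)$ satisfies $\mathrm{dist}(y,\Gamma)\le 3d/2\le 2\delta_0$ and $B_{2\mathrm{dist}(y,\Gamma)}(y)\subset B_{3\delta_0}(x)\subset D'$, so Corollary~\ref{COR:6} gives $u(y)\le \tfrac{3C_0}{2}d$. The Cauchy estimate for harmonic functions then produces
$$|\nabla u(x)|\le \frac{c_n}{d}\sup_{B_{d/2}(x)}u \le \frac{3c_n C_0}{2}.$$

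In the regime $d>\delta_0$, the function $u$ is harmonic in $B_{\delta_0}(x)\subset D'$, and an $L^2$ version of the harmonic gradient estimate gives $|\nabla u(x)|\le c_n\,\delta_0^{-(n+2)/2}\|u\|_{L^2(B_{\delta_0}(x))}$. A global $L^2$ bound on $u$ follows at once from minimality: $\int_\Omega|\nabla u|^2\le J[\bar u]<\infty$, combined with Poincar\'e's inequality applied to $u-\bar u\in W^{1,2}_0(\Omega)$, yields $\|u\|_{L^2(\Omega)}\le M$ for a constant $M$ depending on $\bar u$ and $\Omega$. Hence $|\nabla u(x)|\le c_n M\,\delta_0^{-(n+2)/2}$ in this regime, completing the proof. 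The most delicate point, although essentially bookkeeping, is the calibration between $\delta_0$ and $D'$ so that Corollary~\ref{COR:6} can be applied to every auxiliary point $y\in B_{d/2}(x)$ used in the small-$d$ regime; once this geometric setup is fixed, the two harmonic estimates combine cleanly and uniformly across the two cases.
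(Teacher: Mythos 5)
Your proof is correct and is precisely the standard deduction the paper intends: the paper states that Corollary~\ref{COR:7} follows from the linear growth in Corollary~\ref{COR:6} together with harmonicity of $u$ in $\Omega^+(u)$ (citing Theorem~5.3 of~\cite{ACF}), and your two-regime splitting with interior gradient estimates for harmonic functions, plus the $L^2$ bound from minimality and Poincar\'e for the far-from-$\Gamma$ regime, fills in exactly those details. (Only a trivial arithmetic remark: for $y\in B_{d/2}(x)$ one has $B_{2\,\dist(y,\Gamma)}(y)\subseteq B_{7d/2}(x)\subseteq B_{7\delta_0/2}(x)$ rather than $B_{3\delta_0}(x)$, but since $7\delta_0/2<4\delta_0=\dist(D,\partial D')$ the needed inclusion in $D'$ still holds.)
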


The proof of Corollary~\ref{COR:7} is by now standard (see e.g.
Theorem~5.3 in~\cite{ACF}).

The Lipschitz estimate in Corollary~\ref{COR:7} is optimal,
since the solutions have linear growth from the free boundary, as
stated in the following result:

\begin{lem}\label{BY BELOW}
Let~$u$ be a minimizer in~$\Omega$ for the functional~$J$
in~\eqref{EN:FUNCT} and let~$D\Subset\Om$.

Let~$d>0$ and suppose that~$B_d\subseteq\Om^+(u)$.

Let~$\bar\omega\ge\varpi>0$ and assume that
\begin{equation}\label{IOTA:0}
{\mathcal{L}}^n
\big(\Omega^+(u)\big)\in[\varpi,\bar\omega].\end{equation}
Assume also that
\begin{equation}\label{IOTA}
\iota:=\inf_{ r\in [
\lambda_2 Q_1
\varpi/2, \;
2\lambda_2 Q_2
\bar\omega]} \Phi_0'(r)>0.
\end{equation}
Then, there exist~$d_0$, $c_0>0$, possibly depending on~$\varpi$, 
$\bar\omega$, $Q$, $\lambda_1$, $\lambda_2$ and~$\Omega$, such that if~$d\in(0,d_0)$ we have that
$$u(0)\ge c_0\,\iota\, d.$$
\end{lem}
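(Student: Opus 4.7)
The plan is to follow an Alt--Caffarelli type non-degeneracy scheme by testing minimality against a competitor that deletes the positive phase of~$u$ inside~$B_{d/2}$. Since~$B_d\subseteq\Omega^+(u)$, Corollary~\ref{COR:5} guarantees that~$u$ is harmonic on~$B_d$. First I would define~$v:=u$ on~$\Omega\setminus B_d$, $v:=0$ on~$B_{d/2}$, and $v$ equal to the harmonic extension in the annulus~$B_d\setminus\overline{B_{d/2}}$ with boundary values~$v=u$ on~$\partial B_d$ and~$v=0$ on~$\partial B_{d/2}$. The maximum principle then gives $0\le v\le u$ on~$B_d$, so that~$v\in\A$ is an admissible competitor.

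Next I would exploit~$J[u]\le J[v]$. Because $u-v\in W^{1,2}_0(B_d)$ and $u$ is harmonic in~$B_d$, combined with integration by parts in the annulus (where $\Delta v=0$ and $v=0$ on~$\partial B_{d/2}$), the Dirichlet difference collapses to
\[\int_{B_d}\bigl(|\nabla u|^2-|\nabla v|^2\bigr)=-\int_{\partial B_{d/2}}u\,\partial_r v\,d\mathcal H^{n-1}.\]
On the volume side, $\{v>0\}=\{u>0\}\setminus B_{d/2}$, hence $\mathcal M_2(u)-\mathcal M_2(v)\ge c_n\lambda_2 Q_1 d^n$ for a dimensional~$c_n>0$. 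Choosing $d_0$ so small that $\lambda_2 Q_2 |B_1|d^n\le \lambda_2 Q_1\varpi/2$ for all $d\le d_0$ ensures that both $\mathcal M_2(u)$ and $\mathcal M_2(v)$ stay inside $[\lambda_2 Q_1\varpi/2,\,2\lambda_2 Q_2\bar\omega]$, so~\eqref{IOTA} and the mean value theorem yield
\[\Phi_0(\mathcal M_2(u))-\Phi_0(\mathcal M_2(v))\ge c_n\,\iota\,\lambda_2 Q_1 d^n.\]
Combining these two estimates with the minimality of~$u$ produces the key inequality
\[\int_{\partial B_{d/2}}u\,\partial_r v\,d\mathcal H^{n-1}\ge c\,\iota\,d^n.\]

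The hard part will be converting this bound into a lower estimate on $u(0)$. For the factor~$u$ on~$\partial B_{d/2}$ I would apply Harnack's inequality to the positive harmonic function~$u$ in~$B_d$, obtaining $\max_{\partial B_{d/2}}u\le C_H\,u(0)$. To bound~$\partial_r v$ on~$\partial B_{d/2}$, I would construct an explicit radial harmonic barrier~$\tilde v$ in the annulus with $\tilde v=M:=\max_{\partial B_d}u$ on~$\partial B_d$ and $\tilde v=0$ on~$\partial B_{d/2}$. Since $\tilde v\ge v$ in the annulus and the two functions agree on~$\partial B_{d/2}$, Hopf's principle gives $0\le\partial_r v\le\partial_r\tilde v$, and the explicit radial formula yields $\partial_r\tilde v\le CM/d$. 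Substituting back produces $u(0)\,M\ge c\,\iota\,d^2$.

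To eliminate~$M$, I would invoke the Lipschitz bound of Corollary~\ref{COR:7}, which provides $M\le u(0)+Ld$ on~$\partial B_d$ with $L$ depending only on the admissible data. Then in $u(0)(u(0)+Ld)\ge c\,\iota\,d^2$ either the term~$u(0)Ld$ dominates, yielding~$u(0)\ge(c/(2L))\,\iota\,d$, or the term~$u(0)^2$ dominates, yielding~$u(0)\ge\sqrt{c\iota/2}\,d$; after adjusting constants one arrives at $u(0)\ge c_0\,\iota\,d$ in either case. The most delicate point is the calibration of~$d_0$: it must be small enough both to keep $\mathcal M_2(v)$ within the range where~\eqref{IOTA} is effective, and to allow a fixed Lipschitz estimate from Corollary~\ref{COR:7} to apply on a neighbourhood of~$B_d$.
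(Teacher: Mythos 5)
Your scheme is sound and it is genuinely different from the paper's. The paper does not use a harmonic replacement at all: it takes the competitor $\min\{u,\psi\}$, where $\psi$ is a cutoff vanishing in $B_{d/4}$ and equal to $2\bar C\,u(0)$ on $\partial B_{d/2}$ (the height being dictated by Harnack, $\sup_{B_{d/2}}u\le\bar C u(0)$), so the Dirichlet cost is simply $\int|\nabla\psi|^2\le C\,u(0)^2d^{n-2}$, while the volume gain in $B_{d/4}$ is $\ge c\,\iota\,d^n$ by the same mean-value argument you use; this gives $u(0)^2\ge c\,\iota\,d^2$ with constants that are purely dimensional and structural. Your route (harmonic replacement in $B_d\setminus\overline{B_{d/2}}$, the identity $\int_{B_d}(|\nabla u|^2-|\nabla v|^2)=-\int_{\partial B_{d/2}}u\,\partial_r v$, the radial barrier and Hopf comparison) is correct as far as it goes, and the volume and mean-value estimates, including the calibration of $d_0$ so that ${\mathcal{M}}_2(v)$ stays in the range of \eqref{IOTA}, are exactly right.

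The one wrinkle is the step $M\le u(0)+Ld$. Because your annulus reaches all the way to $\partial B_d$, Harnack cannot control $M=\max_{\partial B_d}u$ (that sphere may touch the free boundary), so you are forced to import a Lipschitz bound. But Corollary~\ref{COR:7} is stated only for $u\ge0$, whereas Lemma~\ref{BY BELOW} is used in the two-phase setting; you would have to invoke Theorem~\ref{COR:7BIS} instead (this is legitimate — its proof rests on the BMO estimate and the ACF monotonicity formula and does not use this lemma — but it makes your $c_0$ depend additionally on $D$ and ${\rm dist}(D,\partial\Omega)$, a dependence the statement does not list and the paper's proof avoids, and it forces your final dichotomy). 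A cleaner fix inside your own framework: perform the harmonic replacement only in $B_{3d/4}\setminus\overline{B_{d/2}}$, keeping $v=u$ outside $B_{3d/4}$. Then the relevant boundary value is $\max_{\partial B_{3d/4}}u\le C(n)\,u(0)$ by interior Harnack in $B_d$, the barrier gives $\partial_r v\le C u(0)/d$ on $\partial B_{d/2}$, and you land directly on $u(0)^2\ge c\,\iota\,d^2$ with dimensional constants, exactly as in the paper, with no Lipschitz estimate and no case distinction.
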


\begin{proof} We let
$$ C_0:=
\lambda_2 \int_{\Omega\setminus B_{d/2}} Q\chi_{\{u>0\}}.$$
We take~$d_0>0$ small enough such that
$$ C_0\ge 
\lambda_2 \,Q_1\,\frac{\varpi}2.$$
In addition, we have that
$$ C_0\le 
\lambda_2 \,Q_2\,\bar\omega.$$
Then, for any~$a$, $b\in [0, \,
\lambda_2 Q_2\bar\omega]$, with~$a\ge b$, we have that
\begin{equation}\label{IOTA:CON}
\Phi_0(C_0+a)-\Phi_0(C_0+b)
\ge \iota\,(a-b),
\end{equation}
thanks to~\eqref{IOTA}.

Also, from Corollary~\ref{COR:5}, we know that~$u$ is harmonic in $B_d$
and so, by Harnack inequality,
\begin{equation}\label{HAH}
\sup_{B_{d/2}} u\le \bar C \inf_{B_{d/2}} u\le \bar C u(0),\end{equation}
for some~$\bar C>0$.

We take~$\psi_0\in C^\infty(\R^n)$, such that~$\psi_0=0$ in~$B_{1/4}$,
$\psi_0=1$ on~$\partial B_{1/2}$ and~$|\nabla \psi_0|\le 10$.
We also set
$$ \psi(x):= 2\bar C \,u(0)\,\psi_0\left( \frac{x}{d}\right).$$
Notice that
\begin{equation}\label{GA28}
|\nabla \psi|\le \frac{20\bar C\,u(0)}{d}.\end{equation}
If~$x\in B_{d/2}$, we define
$$ v(x):=\min\{ u(x),\,\psi(x)\}.$$
Notice that if~$x\in\partial B_{d/2}$, then
$$\psi(x)=2\bar C \,u(0)\ge u(x),$$
thanks to~\eqref{HAH}, hence~$v=u$ on~$\partial B_{d/2}$.
Therefore, we extend~$v(x):=u(x)$ for any~$x$ outside~$B_{d/2}$, and we have that
\begin{equation}\label{GA29}
\begin{split}
&0\le J[v]-J[u]
= \int_\Omega |\nabla v|^2 - \int_\Omega |\nabla u|^2
+\Phi_0({\mathcal{M}}_2(v))-\Phi_0({\mathcal{M}}_2(u))\\
&\qquad\qquad=
\int_{B_{d/2}\cap\{ \psi>u\}} |\nabla \psi|^2 - \int_{B_{d/2}\cap\{\psi>u\}} |\nabla u|^2
+\Phi_0({\mathcal{M}}_2(v))-\Phi_0({\mathcal{M}}_2(u)).
\end{split}
\end{equation}
Now, from~\eqref{GA28}, we have that
\begin{equation}\label{GA30}
\int_{B_{d/2}\cap\{ \psi>u\}} |\nabla \psi|^2 - \int_{B_{d/2}\cap\{\psi>u\}} |\nabla u|^2
\le
\int_{B_{d/2}} |\nabla \psi|^2\le
\frac{400\bar C^2\,{\mathcal{L}}^n(B_{d/2})\,u^2(0)}{d^2}.
\end{equation}
On the other hand
\begin{eqnarray*}
\Phi_0({\mathcal{M}}_2(u))-\Phi_0({\mathcal{M}}_2(v))&=&
\Phi_0\left( \lambda_2 \int_\Omega Q\chi_{\{u>0\}}\right)-
\Phi_0\left( \lambda_2 \int_\Omega Q\chi_{\{v>0\}}\right) \\
&=&
\Phi_0\left( C_0+\lambda_2 \int_{B_{d/2}} Q\chi_{\{u>0\}}\right)-
\Phi_0\left( C_0+\lambda_2 \int_{B_{d/2}} Q\chi_{\{v>0\}}\right).
\end{eqnarray*}
Notice that the quantity 
$$ \lambda_2 \int_{B_{d/2}} Q\chi_{\{u>0\}}+
\lambda_2 \int_{B_{d/2}} Q\chi_{\{v>0\}}$$
is small if~$d$ is small enough, and so we can apply~\eqref{IOTA:CON}
with~$a:= C_0+\lambda_2 \int_{B_{d/2}} Q\chi_{\{u>0\}}$
and~$b:= C_0+\lambda_2 \int_{B_{d/2}} Q\chi_{\{v>0\}}$. In this way,
we find that
\begin{eqnarray*}
\Phi_0({\mathcal{M}}_2(u))-\Phi_0({\mathcal{M}}_2(v))&\ge&\iota
\lambda_2 \int_{B_{d/2}} Q \,\big(\chi_{\{u>0\}}-\chi_{\{v>0\}}\big)\\
&=&
\lambda_2 \int_{B_{d/2}\cap\{ u>\psi\}} Q \,\big(\chi_{\{u>0\}}-\chi_{\{\psi>0\}}\big).
\end{eqnarray*}
Notice also that in~$B_{d/4}$ we have that~$\psi=0<u$,
hence we conclude that
$$ \Phi_0({\mathcal{M}}_2(u))-\Phi_0({\mathcal{M}}_2(v))\ge
\lambda_2 \int_{B_{d/4}} Q \,\big(\chi_{\{u>0\}}-\chi_{\{\psi>0\}}\big)\ge
\lambda_2 \,Q_1\,{\mathcal{L}}^n (B_{d/4}).$$
Now, plugging this and~\eqref{GA30} into~\eqref{GA29} we infer 
$$ \frac{400\bar C^2\,{\mathcal{L}}^n(B_{d/2})\,u^2(0)}{d^2}\ge
\lambda_2 \,Q_1\,{\mathcal{L}}^n (B_{d/4}),$$
which implies the desired result.
\end{proof}

The Lipschitz regularity for the pure two-phase problem,
as stated in Theorem~\ref{COR:7BIS}, can be deduced from the 
BMO estimate, giving coherent growth for $u^+$ 
and $u^-$, and the classical Alt-Caffarelli-Friedman
monotonicity formula. The details go as follows:

\begin{proof}[Proof of Theorem~\ref{COR:7BIS}]
First we observe that, from \eqref{KA78}, it follows that 
\begin{equation}\label{wahlle}
\left|\frac{1}{r^{n-1}}\int_{\partial B_r(x)} u^+
-\frac{1}{r^{n-1}} \int_{\partial B_r(x)} u^-\right|\le C,
\end{equation}
for any~$x\in\Gamma$ and~$r>0$ such that~$B_r(x)\subseteq D\Subset\Omega$.

We recall now the Alt-Caffarelli-Friedman monotonicity formula \cite{ACF}:   
Let $w^+,w^-$ be two continuous, nonnegative subharmonic 
functions in~$B_1$, with~$w_1w_2=0$, $w_1(0)=w_2(0)=0$.
Then, for any~$x_0\in\Gamma$, 
\begin{equation*}
\Phi(r, w_1, w_2):=\frac{1}{r^4}\int_{B_r(x_0)}
\frac{|\nabla w_1(x)|^2}{|x-x_0|^{n-2}}\,dx
\int_{B_r(x_0)}\frac{|\nabla w_2(x)|^2}{|x-x_0|^{n-2}}\,dx
\end{equation*}
is a monotone increasing function of~$r\in(0,1)$, and 
\begin{equation}\label{5.27}
\Phi(1, w_1, w_2)\leq C\left(1+\int_{B_1} w_1^2+ \int_{B_1}w_2^2\right),
\end{equation}
with some universal constant $C>0$.

In what follows, we will apply this theorem with $w_1:=u^+, w_2:=u^-$.

To fix the ideas we assume that $B_1\subset D$. 
Moreover, we take~$x_0\in D$ such that $u(x_0)>0$ and let 
$x\in\Gamma=\p\{u>0\}$ be the closest point to $x_0$, that is 
$\dist(x_0, \Gamma)=|x_0-x|$.  

Setting~$\rho:=|x-x_0|$, 
we suppose that~$u(x_0)\geq M\rho>0$, for some large $M>0$.
Hence, applying the Harnack's inequality, we infer that  
\begin{equation}\label{nesbyo}
u\ge c_0M\rho \quad \hbox{in}\  B_{\frac{3\rho}{4}}(x_0)\subset  D,
\end{equation}
for some~$c_0>0$. Therefore, setting also
$$ \Sigma_\rho:=\partial B_\rho(x)\cap B_{\frac{3\rho}4}(x_0),$$
we conclude that
$$\fint_{\partial B_\rho(x)} u^+ \geq
c_1\fint_{\Sigma_\rho}u^+\geq c_0c_1M\rho,$$
where~$c_1>0$ depends only on the dimension $n$.

{F}rom this and~\eqref{wahlle}, we obtain that
\begin{equation}\label{nesbyo-1}
\fint_{\partial B_\rho(x)}u^-\geq  \fint_{\partial B_\rho(x)}u^+-C\rho
\geq (c_0c_1M-C)\rho>\frac M2\rho
\end{equation} 
if $M$ is large enough.

\smallskip 

Let $y\in \partial B_{\frac\rho2}(x_0)\cap[x,x_0]$ 
be the mid-point of the segment $[x,x_0]$. 
Then, by construction, we have that 
\begin{equation}\label{eifgreggr}
u^+\geq c_0 M\rho\quad {\mbox{ in }} B_{\frac\rho 4}(y),\end{equation} 
where $c_0$ is the constant in \eqref{nesbyo}. 

For our next computation, it is convenient  to switch to 
polar coordinates $(r, \sigma)$ centered at~$x$. 
Let $E_\rho$ be the set of $\sigma\in S^{n-1}$ such that~$u(x+\rho\sigma)<0$.
Let also~$I_\sigma$ be the ray that connects~$y$ and~$x+\rho\sigma$.
In what follows, we parameterize~$I_\sigma$ in arc-lenght by 
the parameter~$r\ge0$, with~$r=0$ corresponding to the point~$y$. 
The function~$u$ evaluated at the point of~$I_\sigma$ parameterized by~$r$
will be denoted by~$u(r)$. 

In this notation, formula~\eqref{eifgreggr} says that~$u^-(r)=0$ for any~$r
\in\left(0,\frac{\rho}{4}\right)$, and so
$$ \nabla u^-(r)=0 \quad {\mbox{ for any }}r
\in\left(0,\frac{\rho}{4}\right).$$
Then, recalling \eqref{nesbyo-1}, we have that
\begin{equation}\begin{split}\label{mek0}
&\frac M2 \leq \frac1{\rho}\fint_{\partial B_\rho(x)}u^-
=\frac{C}{\rho} \,\int_{E_\rho}
u^-(x+\rho\sigma)\, d{\mathcal{H}}^{n-1}(\sigma) =
\frac{C}{\rho}\, \int_{E_\rho}\int_{I_\sigma\cap B_\rho(x)} 
D_ru^-(r)\,dr\, d{\mathcal{H}}^{n-1}(\sigma)\\
&\qquad \le \frac{C}{\rho}\,\big(\rho \H^{n-1}(E_\rho)\big)^{\frac12} \,
\left( \int_{E_\rho}\int_{I_\sigma\cap B_\rho(x)} 
|D_ru^-(r)|^2\,dr \,d{\mathcal{H}}^{n-1}(\sigma)\right)^{\frac12}\\
&\qquad \le \frac{C}{\rho}\,\big(\rho \H^{n-1}(E_\rho)\big)^{\frac12} \,
\left( \int_{S^{n-1}}\int_{I_\sigma\cap \{r\in[\rho/4,\,2\rho]\}} 
|D_ru^-(r)|^2\,dr \,d{\mathcal{H}}^{n-1}(\sigma)\right)^{\frac12}\\
&\qquad \le \frac{C}{\rho}\,\big(\rho \H^{n-1}(E_\rho)\big)^{\frac12} \,
\left( \int_{B_{2\rho}(y)\setminus B_{\rho/4}(y)} \frac{
|\nabla u^-(z)|^2}{|z-y|^{n-1}}
\,dz\right)^{\frac12}\\
&\qquad \le \frac{C}{\rho}\,\big(\rho \H^{n-1}(E_\rho)\big)^{\frac12} \,
\left( \frac1\rho\,\int_{B_{2\rho}(y)\setminus B_{\rho/4}(y)} \frac{
|\nabla u^-(z)|^2}{|z-y|^{n-2}}
\,dz\right)^{\frac12},\end{split}\end{equation}
up to renaming~$C>0$ from line to line, 
where the H\"older's inequality was also used.

Now we observe that, if~$z\in B_{2\rho}(y)\setminus B_{\rho/4}(y)$,
we have that
$$ |z-x|\le |z-y|+|x-y|\le 3\rho$$
and so
$$ |z-y|\ge \frac{\rho}4\ge \frac{|z-x|}{12}.$$
Thus, renaming constants in~\eqref{mek0}, we obtain that
\begin{equation}\begin{split}\label{mek}
\frac M2 \,&
\le \frac{C}{\rho}\,\big(\rho \H^{n-1}(E_\rho)\big)^{\frac12} \,
\left( \frac1\rho\,\int_{B_{3\rho}(x)} \frac{
|\nabla u^-(z)|^2}{|z-x|^{n-2}}
\,dz\right)^{\frac12} \\ &=
\frac{C}{\rho}\,\big( \H^{n-1}(E_\rho)\big)^{\frac12} \,
\left( \int_{B_{3\rho}(x)} \frac{
|\nabla u^-(z)|^2}{|z-x|^{n-2}}
\,dz\right)^{\frac12}
.\end{split}\end{equation}
In order to estimate the integral average of $u^+$, we use~\eqref{eifgreggr}
and we obtain that (up to renaming constants)
\begin{eqnarray*}
&& M \rho^n\leq C\int_{\p B_{\frac{\rho}4}(y)} u^+= C\rho^{n-1} \int_{\partial B_1} u^+\left( y+\frac\rho4\omega\right)\,
d{\mathcal{H}}^{n-1}(\omega)\\&&\qquad\qquad
=
C\rho^{n-1} \int_{\partial B_1} \left[u^+\left( y+\frac\rho4\omega\right)-u^+(x)\right]\,
d{\mathcal{H}}^{n-1}(\omega)\\
&&\qquad\qquad\le C
\rho^{n-1} \int_{\partial B_1} \left[
\int_0^1
\left|\nabla u^+\left( x+r\left(y-x+\frac\rho4\omega\right)\right)\cdot\left(y-x+\frac\rho4\omega\right)
\right|
\,dr\right]\,
d{\mathcal{H}}^{n-1}(\omega)\\
&&\qquad\qquad\le C
\rho^{n} \int_{\partial B_1} \left[
\int_0^1
\left|\nabla u^+\left( x+r\left(y-x+\frac\rho4\omega\right)\right)
\right|
\,dr\right]\,
d{\mathcal{H}}^{n-1}(\omega)\\
&&\qquad\qquad\le C
\rho^{n-1} \int_{\partial B_1} \left[
\int_{\rho/4}^{2\rho}
\left|\nabla u^+\left( x+\bar r\bar\omega\right)
\right|
\,d\bar r\right]\,
d{\mathcal{H}}^{n-1}(\bar \omega)\\
&&\qquad\qquad= C
\rho^{n-1} \int_{\partial B_1} \left[
\int_{\rho/4}^{2\rho} \frac{{\bar r^{n-1}}\;
\left|\nabla u^+\left( x+\bar r\bar\omega\right)
\right|}{\bar r^{n-1}}
\,d\bar r\right]\,
d{\mathcal{H}}^{n-1}(\bar \omega)\\
&&\qquad\qquad= C
\rho^{n-1} \int_{B_{2\rho}(x)\setminus B_{\rho/4}(x)}
\frac{
\left|\nabla u^+\left( z\right)
\right|
}{|z-x|^{n-1}}\,dz\\
&&\qquad\qquad\le C
\rho^{n-1+\frac{n}2} \left(\int_{B_{2\rho}(x)\setminus B_{\rho/4}(x)}
\frac{
\left|\nabla u^+\left( z\right)
\right|^2
}{|z-x|^{2(n-1)}}\,dz\right)^{\frac12}
\\
&&\qquad\qquad\le C
\rho^{n-1} \left(\int_{B_{2\rho}(x)\setminus B_{\rho/4}(x)}
\frac{
\left|\nabla u^+\left( z\right)
\right|^2
}{|z-x|^{n-2}}\,dz\right)^{\frac12}.
\end{eqnarray*}
Combining this with~\eqref{mek}, and renaming constants, we conclude that
\begin{eqnarray*}
M^2 &\le& \frac{C}{\rho^2}\,\big( \H^{n-1}(E_\rho)\big)^{\frac12} \,
\left( \int_{B_{3\rho}(x)} \frac{
|\nabla u^-(z)|^2}{|z-x|^{n-2}}
\,dz\right)^{\frac12}
\,
\left(\int_{B_{2\rho}(x)\setminus B_{\rho/4}(x)}
\frac{
\left|\nabla u^+\left( z\right)
\right|^2
}{|z-x|^{n-2}}\,dz\right)^{\frac12}\\ &\le&
\frac{C}{\rho}\,
\left( \int_{B_{3\rho}(x)} \frac{
|\nabla u^-(z)|^2}{|z-x|^{n-2}}
\,dz\right)^{\frac12}
\,
\left(\int_{B_{3\rho}(x)}
\frac{
\left|\nabla u^+\left( z\right)
\right|^2
}{|z-x|^{n-2}}\,dz\right)^{\frac12}\\&=&
C\,
\big( \Phi(\rho,u^+,u^-)\big)^{\frac12}.
\end{eqnarray*}
Hence, from the monotonicity formula and~\eqref{5.27}, we get that
$$ M^2\le C\,
\left(1+\int_{B_1(x)}(u^+)^2+ \int_{B_1(x)}(u^-)^2\right)^{\frac12},
$$
which bounds~$M$, as desired.
\end{proof}

\section{Free boundary condition}\label{sec:FB-cond}

In this section, we will assume that the function~$Q$ introduced in~\eqref{Q}
is continuous.

Next result shows that, at points~$p$ of the free boundary,
the following condition holds true in the sense of distributions:
\begin{eqnarray*}&& \big(\partial_\nu^+ u(p)\big)^2 - \big(\partial_\nu^- u(p)\big)^2\\&&\quad= 
\left[\lambda_2\partial_{r_2}\Phi
\left(\lambda_1
\int_\Om Q
\,\chi_{ \{ u<0\} },\;
\lambda_2
\int_\Om Q
\,\chi_{ \{ u>0\} }\right)\,
-\,
\lambda_1\partial_{r_1}\Phi
\left(\lambda_1
\int_\Om Q
\,\chi_{ \{ u<0\} },\;
\lambda_2
\int_\Om Q
\,\chi_{ \{ u>0\} }\right)\,
\right]\,Q(p)
,\end{eqnarray*}
where~$\nu$ is the normal vector exterior to~$\partial\{u>0\}$
(and thus pointing towards~$\{u\le0\}$) and we set
\begin{equation}\label{no00n}
\partial_{\nu}^+ u (x):=\lim_{t\to0}\frac{u(x-t\nu)-u(x)}{t}\;{\mbox{ and }}\;
\partial_{\nu}^- u (x):=\lim_{t\to0}\frac{u(x+t\nu)-u(x)}{t}
.\end{equation}
More precisely, we have that:

\begin{lem}\label{FBCOND}
Let~$u$
be a minimizer of $J$
as in~\eqref{EN:FUNCT} 
and suppose
that~$Q\in W^{1,1}(\Omega)$. 
Assume also that
\begin{equation}\label{ME0}
{\mbox{the set~$\{u=0\}\cap\Omega$ has zero measure.}}
\end{equation}
Then
$$\lim_{\e\searrow0}\left\{
\int_{\partial { \{ u>\e\} }\cap\Omega}  {\mathcal{I}}^{\e,+}(u,x)\,
V(x)\cdot\nu^{\e,+}(x)\,d{\mathcal{H}}^{n-1}(x)
+
\int_{\partial { \{ u<-\e\} }\cap\Omega} {\mathcal{I}}^{\e,-}(u,x)
\,V(x)\cdot\nu^{\e,-}(x)\,d{\mathcal{H}}^{n-1}(x)\right\}
=0$$
for any vector field~$V\in C^\infty_0(\Omega,\R^n)$,
where we have denoted
by~$\nu^{\e,+}$ and~$\nu^{\e,-}$ the exterior normals of~$\{u>\e\}$
and~$\{u<-\e\}$, respectively, and
\begin{equation}\label{DEF:I}
\begin{split}
& {\mathcal{I}}^{\e,+}(u,x):=
|\partial_{\nu}^+u(x)|^2 
-\lambda_2\partial_{r_2}\Phi
\left(\lambda_1
\int_\Om Q(\xi)
\,\chi_{ \{ u<-\e\} }(\xi)\,d\xi,\;
\lambda_2
\int_\Om Q(\xi)
\,\chi_{ \{ u>\e\} }(\xi)\,d\xi\right)\,
Q(x)\\ {\mbox{and }}\;&
{\mathcal{I}}^{\e,-}(u,x):=
|\partial_{\nu}^-u(x)|^2 
-
\lambda_1\partial_{r_1}\Phi
\left(\lambda_1
\int_\Om Q(\xi)
\,\chi_{ \{ u<-\e\} }(\xi)\,d\xi,\;
\lambda_2
\int_\Om Q(\xi)
\,\chi_{ \{ u>\e\} }(\xi)\,d\xi\right)
\,Q(x).
\end{split}\end{equation}
\end{lem}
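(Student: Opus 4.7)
The plan is to derive the free boundary condition by an inner (domain) variation. For $V\in C_0^\infty(\Omega,\mathbb R^n)$ and small $|t|$, the map $\Phi_t(x):=x+tV(x)$ is a diffeomorphism of $\Omega$ which coincides with the identity outside the support of $V$, so the composition $u_t:=u\circ\Phi_t^{-1}$ is an admissible competitor. Since $t$ can have either sign, minimality forces $\frac{d}{dt}\big|_{t=0}J[u_t]=0$. A standard change of variables followed by differentiation at $t=0$ produces the stress--energy identity
\begin{equation*}
\frac{d}{dt}\bigg|_{t=0}\int_{\Omega}|\nabla u_t|^2\,dx=\int_{\Omega}\mathcal T[u,V]\,dy,\quad \mathcal T[u,V]:=|\nabla u|^2\,\text{div}\,V-2\sum_{i,j}\partial_i u\,\partial_j u\,\partial_i V_j,
\end{equation*}
while the identities $\{u_t>0\}=\Phi_t(\{u>0\})$ and (via $|\{u=0\}|=0$) $\{u_t<0\}=\Phi_t(\{u<0\})$, combined with $Q\in W^{1,1}$, give
\begin{equation*}
\frac{d}{dt}\bigg|_{t=0}\mathcal M_2(u_t)=\lambda_2\!\int_{\{u>0\}}\!\text{div}(QV)\,dy,\qquad \frac{d}{dt}\bigg|_{t=0}\mathcal M_1(u_t)=\lambda_1\!\int_{\{u<0\}}\!\text{div}(QV)\,dy.
\end{equation*}
The chain rule applied to $\Phi(\mathcal M_1(u_t),\mathcal M_2(u_t))$ then yields the Euler--Lagrange bulk identity
\begin{equation}\label{pnEL}
\int_{\Omega}\mathcal T[u,V]\,dy+\lambda_2\,\partial_{r_2}\Phi\!\int_{\{u>0\}}\!\text{div}(QV)\,dy+\lambda_1\,\partial_{r_1}\Phi\!\int_{\{u<0\}}\!\text{div}(QV)\,dy=0,
\end{equation}
with $\partial_{r_i}\Phi$ evaluated at $(\mathcal M_1(u),\mathcal M_2(u))$.

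Next I convert (\ref{pnEL}) into boundary integrals. By Corollary~\ref{COR:5} (and its direct analogue on $\{u<0\}$, obtained from local perturbations that do not alter the volumes), $u$ is harmonic, hence $C^\infty$, on each of the open sets $\{u>0\}$ and $\{u<0\}$, so Sard's theorem ensures that for a.e.\ $\epsilon>0$ the level sets $\{u=\pm\epsilon\}$ are smooth hypersurfaces on which $\nabla u\ne 0$. For such an $\epsilon$, the stress tensor of the harmonic function $u$ is divergence-free, so the divergence theorem gives
\begin{equation*}
\int_{\{u>\epsilon\}}\!\mathcal T[u,V]\,dy=-\int_{\partial\{u>\epsilon\}}\!|\partial_\nu^+ u|^2\,V\cdot\nu^{\epsilon,+}\,d\mathcal H^{n-1},
\end{equation*}
the sign being read off from the fact that on the regular level set $\nabla u$ is antiparallel to $\nu^{\epsilon,+}$, so that $|\nabla u|^2\,V\cdot\nu^{\epsilon,+}-2(\nabla u\cdot V)(\nabla u\cdot\nu^{\epsilon,+})=-|\nabla u|^2\,V\cdot\nu^{\epsilon,+}=-|\partial_\nu^+ u|^2\,V\cdot\nu^{\epsilon,+}$; the analogous identity holds on $\{u<-\epsilon\}$ with $\partial_\nu^- u$ and $\nu^{\epsilon,-}$. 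In parallel, the divergence theorem also gives
\begin{equation*}
\int_{\{u>\epsilon\}}\!\text{div}(QV)\,dy=\int_{\partial\{u>\epsilon\}}\!Q\,V\cdot\nu^{\epsilon,+}\,d\mathcal H^{n-1},
\end{equation*}
and its counterpart on $\{u<-\epsilon\}$; the regularity $Q\in W^{1,1}$ legitimizes the trace.

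To close the argument I pass to the limit. Theorem~\ref{COR:7BIS} provides $u\in C^{0,1}_{\text{loc}}$, so $\mathcal T[u,V]\in L^1_{\text{loc}}$; combined with the assumption $|\{u=0\}|=0$, dominated convergence forces
\begin{equation*}
\int_{\{u>\epsilon\}}\!\mathcal T[u,V]\,dy\longrightarrow\int_{\{u>0\}}\!\mathcal T[u,V]\,dy,\qquad\int_{\{u>\epsilon\}}\!\text{div}(QV)\,dy\longrightarrow\int_{\{u>0\}}\!\text{div}(QV)\,dy
\end{equation*}
as $\epsilon\to 0^+$ (and analogously on the negative side), while continuity of $\nabla\Phi$ guarantees that $\partial_{r_i}\Phi$ evaluated at the $\epsilon$-masses converges to $\partial_{r_i}\Phi$ at the true masses. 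Substituting the boundary-integral representations into (\ref{pnEL}) and regrouping by sign recovers exactly
\begin{equation*}
\lim_{\epsilon\searrow 0}\!\left[\int_{\partial\{u>\epsilon\}}\!\mathcal I^{\epsilon,+}(u,x)\,V\cdot\nu^{\epsilon,+}\,d\mathcal H^{n-1}+\int_{\partial\{u<-\epsilon\}}\!\mathcal I^{\epsilon,-}(u,x)\,V\cdot\nu^{\epsilon,-}\,d\mathcal H^{n-1}\right]=0.
\end{equation*}
The main technical obstacle is that the divergence-theorem step requires $\{u=\pm\epsilon\}$ to be a smooth hypersurface with $\nabla u\ne 0$, which holds only for a.e.\ $\epsilon$, so one must first take the limit along a regular sequence $\epsilon_k\to 0^+$ and then appeal to the continuity in $\epsilon$ of the bulk quantities to obtain the full limit in the statement.
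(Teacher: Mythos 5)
Your proposal is essentially correct and rests on the same underlying mechanism as the paper (a two\--sided inner variation along the flow of $V$, the chain rule on $\Phi$, and conversion of volume variations into boundary integrals), but it is organized genuinely differently. The paper never forms a ``bulk'' Euler--Lagrange identity: it keeps every quantity at the $\e$\--level, expands $J[u_t]-J[u]$ directly, and imports the key representation of the Dirichlet first variation as a limit of boundary integrals over $\partial\{u>\e\}$ and $\partial\{u<-\e\}$ from the companion paper \cite{DKV} (formula~\eqref{PO:A3}), while the volume term is linearized with $\partial_{r_i}\Phi$ evaluated at the $\e$\--masses and the limit $\e\searrow0$ is taken by dominated convergence before dividing by $t$. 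You instead first derive the distributional identity~(\ref{pnEL}) with $\partial_{r_i}\Phi$ at the limiting masses, and only afterwards reconstruct the $\e$\--level boundary integrals by exploiting that the stress tensor $T_{ij}=|\nabla u|^2\delta_{ij}-2\partial_iu\,\partial_ju$ is divergence\--free on the harmonic phases, together with Sard's theorem and the divergence theorem; the continuity of $\nabla\Phi$ then moves the coefficients from the limiting masses to the $\e$\--masses. Your route is more self\--contained (no appeal to \cite{DKV}); its price is that you must justify harmonicity of $u$ in $\{u<0\}$ (you correctly note this follows from perturbations not affecting the volume term) and that differentiating $t\mapsto\mathcal M_i(u_t)$ for $Q\in W^{1,1}$ requires the smoothing approximation that the paper carries out explicitly.

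The one step you should tighten is the upgrade from regular values to all $\e$. Continuity in $\e$ of the bulk quantities does not by itself control the boundary integrals at non\--regular levels, because for such $\e$ you have not yet identified the boundary integral with the bulk one. The clean fix is to observe that, for $\e>0$, the level sets $\{u=\pm\e\}$ lie inside the open sets where $u$ is harmonic, hence real\--analytic; their critical subsets have Hausdorff dimension at most $n-2$, so $\{u>\e\}$ and $\{u<-\e\}$ are sets of locally finite perimeter and the Gauss--Green identities you use hold for \emph{every} small $\e$, not just almost every one. With that remark (or, alternatively, by quoting the $\e$\--limit representation of \cite{DKV} as the paper does), your argument yields the full limit in the statement.
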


\begin{proof} The argument is a (not completely straightforward)
modification of the classical domain variations in Theorem~2.5
in~\cite{AC} and in Theorem~2.4 of~\cite{ACF}. 
We provide full details for the facility of the reader.
For small~$t\in\R$, we consider the ODE flow~$y=y(t;x)$ given
by the Cauchy problem
\begin{equation*}
\left\{
\begin{matrix}
\partial_t y(t;x) = V(y(t;x)),\\
y(0;x)= x.
\end{matrix}
\right.\end{equation*}
The map~$\R^n\ni x\mapsto y(t;x)$
is invertible for small~$t$, i.e. we can consider
the inverse diffeomorphism~$x(t;y)$ and we define
$$ u_t (y):= u( x(t;y) ).$$
We remark that, in light of~\eqref{ME0},
\begin{equation}\label{ME0:2}
{\mbox{the set~$\{u_t=0\}\cap\Omega$ has zero measure.}}
\end{equation}
Given~$\e>0$, we define~${E^{\e,+}}:=\{u>\e\}\cap\Om$,
${E^{\e,-}}:=\{u<\e\}\cap\Om$
and~${E^{\e,\pm}_t}:=y(t;{E^{\e,\pm}})$. 
Notice that
\begin{equation}\label{PDI}
\{u_t>\e\}\cap\Om = {E^{\e,+}_t}\;{\mbox{ and }}\;
\{u_t<\e\}\cap\Om = {E^{\e,-}_t}.
\end{equation}
One can check (see e.g. formulas~(4.5), (4.13) and~(4.22) in~\cite{DKV})
that
\begin{eqnarray}
\label{PO:A1}&& y(t;\Om)=\Om,\\
\label{PO:A2}&& \det D_x y(t;x) = 1+t\,{\rm div} V(x) +o(t)\\
{\mbox{and }}
\label{PO:A3}&& 
\int_\Omega |\nabla u(x)|^2\,dx-\int_\Omega |\nabla u_t(y)|^2\,dy= \lim_{\e\searrow0}
t\,\int_{\partial {E^{\e,+}}\cap\Omega}
|\partial_{\nu}^+u(y)|^2 \,V(y)\cdot\nu^{\e,+}(y)\,d{\mathcal{H}}^{n-1}(y)
\\ \nonumber&&\qquad\qquad+
t\,\int_{\partial {E^{\e,-}}\cap\Omega}
|\partial_{\nu}^-u(y)|^2 \,V(y)\cdot\nu^{\e,-}(y)\,d{\mathcal{H}}^{n-1}(y)+
o(t).\end{eqnarray}
By approximating~$Q$ by a sequence of
smooth functions in~$W^{1,1}(\Omega)$
and a.e. in~$\Omega$, we deduce from~\eqref{PO:A1} and~\eqref{PO:A2} that
\begin{equation*}\begin{split}
&
\int_\Om Q(y)\,\chi_{{E^{\e,\pm}_t}}(y)\,dy=
\int_\Om Q\big(y(t;x)\big)
\,\chi_{{E^{\e,\pm}}}(x)\,\big( 1+t\,{\rm div} V(x) +o(t)\big)\,dx
\\&\qquad=
\int_\Om Q(x)\,\chi_{{E^{\e,\pm}}}(x)\,\big( 1+t\,{\rm div} V(x) +o(t)\big)\,dx\\&\qquad\qquad+
\int_{\Omega\times(0,1)} \nabla Q\big( x+\tau(y(t;x)-x)\big)
\cdot(y(t;x)-x)
\,\chi_{{E^{\e,\pm}}}(x)\,\big( 1+t\,{\rm div} V(x) +o(t)\big)
\,dx\,d\tau.\end{split}\end{equation*}
Therefore
\begin{equation}\label{minor:2}\begin{split}
&
\int_\Om Q(y)\,\chi_{{E^{\e,\pm}_t}}(y)\,dy\\&\qquad=
\int_\Om Q(x)\,\chi_{{E^{\e,\pm}}}(x)\,dx
+t\,\int_\Om {\rm div}\,(Q(x)V(x))\,\chi_{{E^{\e,\pm}}}(x)\,dx
-t\int_{ {E^{\e,\pm}} }\nabla Q(x)\cdot V(x)\,dx
\\&\qquad\qquad+t
\int_{{E^{\e,\pm}}\times(0,1)} \nabla Q\big( x+\tau t V(x)+o(t)\big)\cdot V(x)
\,dx\,d\tau
+o(t).
\end{split}\end{equation}
Furthermore, changing variables again, we see that
\begin{equation}\label{minor:3}\begin{split}&
\int_{{E^{\e,\pm}}\times(0,1)} \nabla Q\big( x+\tau t V(x)+o(t)\big)\cdot V(x)
\,dx\,d\tau\\&\qquad=
\int_{{\tilde E^{\e,\pm}_t}\times(0,1)} \nabla Q(y)\cdot V(
y-\tau t V(y)+o(t)) \,(1-t\,{\rm div}V(y)+o(t))
\,dy\,d\tau
\\ &\qquad=\int_{{\tilde E^{\e,\pm}_t}\times(0,1)} \nabla Q(y)\cdot V(
y) 
\,dy\,d\tau+o(1)
\\ &\qquad=\int_{E^{\e,\pm}} \nabla Q(y)\cdot V(
y) 
\,dy+o(1),
\end{split}\end{equation}
for a suitable set~${\tilde E^{\e,\pm}_t}$ which is close, for small~$t$,
to~$E^{\e,\pm}$.
{F}rom~\eqref{minor:2} and~\eqref{minor:3}, we have that
\begin{equation*}
 \int_\Om Q(y)\,\chi_{{E^{\e,\pm}_t}}(y)\,dy=
\int_\Om \big( Q(x)+ t\,{\rm div} \big(Q(x)V(x)\big) \big)
\,\chi_{{E^{\e,\pm}}}(x)\,dx+o(t).
\end{equation*}
Consequently, we can linearize~$\Phi$ and obtain
\begin{equation}\label{FoR}
\begin{split}
& \Phi\left(\lambda_1\int_\Om Q(y)\,\chi_{{E^{\e,-}_t}}(y)\,dy,\;
\lambda_2\int_\Om Q(y)\,\chi_{{E^{\e,+}_t}}(y)\,dy
\right)\\ 
=\;&
\Phi\left(\lambda_1
\int_\Om Q(x)
\,\chi_{{E^{\e,-}}}(x)\,dx, \;\lambda_2
\int_\Om Q(x)
\,\chi_{{E^{\e,+}}}(x)\,dx\right)\\
&\quad
+t\lambda_1\partial_{r_1}\Phi
\left(\lambda_1
\int_\Om Q(x)
\,\chi_{{E^{\e,-}}}(x)\,dx,\;
\lambda_2
\int_\Om Q(x)
\,\chi_{{E^{\e,+}}}(x)\,dx
\right)
\int_\Om {\rm div} \big(Q(x)V(x)\big)
\,\chi_{{E^{\e,-}}}(x)\,dx \\
&\quad
+t\lambda_2\partial_{r_2}\Phi
\left(\lambda_1
\int_\Om Q(x)
\,\chi_{{E^{\e,-}}}(x)\,dx,\;
\lambda_2
\int_\Om Q(x)
\,\chi_{{E^{\e,+}}}(x)\,dx
\right)
\int_\Om {\rm div} \big(Q(x)V(x)\big)
\,\chi_{{E^{\e,+}}}(x)\,dx 
+o(t)\\
=\;&
\Phi\left(\lambda_1
\int_\Om Q(x)
\,\chi_{{E^{\e,-}}}(x)\,dx, \;\lambda_2
\int_\Om Q(x)
\,\chi_{{E^{\e,+}}}(x)\,dx\right)
\\ &\quad+t\lambda_1\partial_{r_1}\Phi
\left(\lambda_1
\int_\Om Q(x)
\,\chi_{{E^{\e,-}}}(x)\,dx,\;
\lambda_2
\int_\Om Q(x)
\,\chi_{{E^{\e,+}}}(x)\,dx\right)
\int_{\partial {E^{\e,-}}\cap\Om }Q(x)V(x)\cdot\nu^{\e,-}(x)\,d{\mathcal{H}}^{n-1}(x)
\\ &\quad+t\lambda_2\partial_{r_2}\Phi
\left(\lambda_1
\int_\Om Q(x)
\,\chi_{{E^{\e,-}}}(x)\,dx,\;
\lambda_2
\int_\Om Q(x)
\,\chi_{{E^{\e,+}}}(x)\,dx\right)
\int_{\partial {E^{\e,+}}\cap\Om }Q(x)V(x)\cdot\nu^{\e,+}(x)\,d{\mathcal{H}}^{n-1}(x)
\\ &\qquad+o(t).
\end{split}
\end{equation}
Moreover, by inspection and recalling~\eqref{PDI}, one sees that
$$ \lim_{\e\searrow0} \chi_{E^{\e,+}_t}= \lim_{\e\searrow0} \chi_{\{ u_t>\e\}}=
\chi_{\{ u_t>0\}} ,$$
for any small~$t\ge0$.
Similarly, and using~\eqref{ME0:2},
$$ \lim_{\e\searrow0} \chi_{E^{\e,-}_t}=
\chi_{\{ u_t<0\}} =\chi_{\{ u_t\le0\}}$$
a.e. in~$\Omega$. As a consequence, by the Dominated Convergence Theorem,
$$ {\mathcal{M}}_1(u_t) = \lim_{\e\searrow0}
\lambda_1
\int_\Om Q(x)
\,\chi_{{E^{\e,-}_t}}(x)\,dx
\;\;{\mbox{ and }}\;\;
{\mathcal{M}}_2(u_t) = \lim_{\e\searrow0}
\lambda_2
\int_\Om Q(x)
\,\chi_{{E^{\e,+}_t}}(x)\,dx,$$
for any small~$t\ge0$. So, we can take the limit with respect to~$\e$ in formula~\eqref{FoR}
and obtain that
\begin{eqnarray*}
&& \Phi\big({\mathcal{M}}_1(u_t),\;{\mathcal{M}}_2(u_t)\big)-
\Phi\big({\mathcal{M}}_1(u),\;{\mathcal{M}}_2(u)\big)\\
&=&\lim_{\e\searrow0}
t\lambda_1\partial_{r_1}\Phi
\left(\lambda_1
\int_\Om Q(x)
\,\chi_{{E^{\e,-}}}(x)\,dx,\;
\lambda_2
\int_\Om Q(x)
\,\chi_{{E^{\e,+}}}(x)\,dx\right)
\int_{\partial {E^{\e,-}}\cap\Om }Q(x)V(x)\cdot\nu^{\e,-}(x)\,d{\mathcal{H}}^{n-1}(x)
\\ &&\quad+t\lambda_2\partial_{r_2}\Phi
\left(\lambda_1
\int_\Om Q(x)
\,\chi_{{E^{\e,-}}}(x)\,dx,\;
\lambda_2
\int_\Om Q(x)
\,\chi_{{E^{\e,+}}}(x)\,dx\right)
\int_{\partial {E^{\e,+}}\cap\Om }Q(x)V(x)\cdot\nu^{\e,+}(x)\,d{\mathcal{H}}^{n-1}(x)
\\ &&\qquad+o(t).
\end{eqnarray*}

{F}rom this and
\eqref{PO:A3}, we have that
\begin{eqnarray*}&&
J[u]-J[u_t]\\
&=&
\int_\Omega |\nabla u(x)|^2\,dx-\int_\Omega |\nabla u_t(y)|^2\,dy
\\ &&\qquad+
\Phi\big({\mathcal{M}}_1(u),\;{\mathcal{M}}_2(u)\big)-
\Phi\big({\mathcal{M}}_1(u_t),\;{\mathcal{M}}_2(u_t)\big)
\\
&=& \lim_{\e\searrow0}
t\,\int_{\partial {E^{\e,+}}\cap\Omega}
|\partial_{\nu}^+u(y)|^2 \,V(y)\cdot\nu^{\e,+}(y)\,d{\mathcal{H}}^{n-1}(y)
+
t\,\int_{\partial {E^{\e,-}}\cap\Omega}
|\partial_{\nu}^-u(y)|^2 \,V(y)\cdot\nu^{\e,-}(y)\,d{\mathcal{H}}^{n-1}(y)
\\ &&-
t\lambda_1\partial_{r_1}\Phi
\left(\lambda_1
\int_\Om Q(x)
\,\chi_{{E^{\e,-}}}(x)\,dx,\;
\lambda_2
\int_\Om Q(x)
\,\chi_{{E^{\e,+}}}(x)\,dx\right)
\int_{\partial {E^{\e,-}}\cap\Om }Q(x)V(x)\cdot\nu^{\e,-}(x)\,d{\mathcal{H}}^{n-1}(x)
\\ &&\quad-t\lambda_2\partial_{r_2}\Phi
\left(\lambda_1
\int_\Om Q(x)
\,\chi_{{E^{\e,-}}}(x)\,dx,\;
\lambda_2
\int_\Om Q(x)
\,\chi_{{E^{\e,+}}}(x)\,dx\right)
\int_{\partial {E^{\e,+}}\cap\Om }Q(x)V(x)\cdot\nu^{\e,+}(x)\,d{\mathcal{H}}^{n-1}(x)
\\ &&\qquad+o(t).
\end{eqnarray*}
Dividing by~$t$ and then letting $t\to 0$, we obtain the desired result.
\end{proof}

We observe that when~$\Phi(r_1,r_2):=r_1+r_2$, then \eqref{DEF:I}
reduces to~${\mathcal{I}}^{\e,+}(u,x):=
|\partial_{\nu}^+u(x)|^2 
-\lambda_2\,Q(x)$ and~$
{\mathcal{I}}^{\e,-}(u,x):=
|\partial_{\nu}^-u(x)|^2 
-\lambda_1\,Q(x)$. Hence, in this particular case,
our Lemma~\ref{FBCOND} boils down to Theorem~2.4 in~\cite{ACF}.
If also~$\lambda_1:=0$, then 
Lemma~\ref{FBCOND} boils down to Theorem~2.5 in~\cite{AC}.

\medskip 
Next we show that $\fb{u}$ contains $\partial\{u<0\}$ under the condition \eqref{LAMBDA12}.
Thus one has sharp separation of phases.
\begin{lem}
Let $u$ be a minimizer in~$\Omega$ of the functional in~\eqref{EN:FUNCT}. 
Then $\partial\{u<0\}\setminus \partial\{u>0\}=\varnothing$.
\end{lem}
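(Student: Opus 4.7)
I plan to argue by contradiction: assume there exists $x_0\in\partial\{u<0\}$ with $x_0\notin\partial\{u>0\}$. The first step is to translate this combinatorial hypothesis into a clean analytic statement. By Corollary~\ref{COR:5} (or Theorem~\ref{COR:7BIS}), $u$ is continuous, so $\{u>0\}$ is open and $\partial\{u>0\}=\overline{\{u>0\}}\setminus\{u>0\}$. Since $x_0\in\partial\{u<0\}$, continuity forces $u(x_0)\le 0$, so $x_0\notin\{u>0\}$; combined with $x_0\notin\partial\{u>0\}$, this gives $x_0\notin\overline{\{u>0\}}$. Therefore there exists $r>0$ with $B_r(x_0)\Subset\Omega$ on which $u\le 0$.

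The second step is a harmonic replacement comparison. Let $v$ solve $\Delta v=0$ in $B_r(x_0)$ with $v=u$ on $\p B_r(x_0)$, extended by $u$ outside $B_r(x_0)$. Since $u\le 0$ on $\p B_r(x_0)$, the maximum principle yields $v\le 0$ in $B_r(x_0)$. Consequently $\{v>0\}=\{u>0\}$ as subsets of $\Omega$, so $\M_2(v)=\M_2(u)$ and the volume term $\Phi_0(\M_2(\cdot))$ coincides for $u$ and $v$. Minimality of $u$ then reduces to
\[
\int_{B_r(x_0)}|\na u|^2\le \int_{B_r(x_0)}|\na v|^2,
\]
while the harmonic replacement $v$ minimizes Dirichlet energy among functions with the same boundary trace, giving the reverse inequality. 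Strict convexity of the Dirichlet energy then forces $u\equiv v$ in $B_r(x_0)$, so $u$ is harmonic there.

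Finally, I will combine the harmonicity with the sign constraint $u\le 0$ in $B_r(x_0)$ to rule out both possibilities for $u(x_0)$. If $u(x_0)<0$, continuity places $x_0$ in the interior of $\{u<0\}$, contradicting $x_0\in\partial\{u<0\}$. If $u(x_0)=0$, the strong maximum principle applied to the harmonic function $u\le 0$ achieving its maximum at the interior point $x_0$ forces $u\equiv 0$ in $B_r(x_0)$, so $\{u<0\}\cap B_r(x_0)=\varnothing$, again contradicting $x_0\in\overline{\{u<0\}}$. Either way we reach a contradiction, establishing $\partial\{u<0\}\subseteq\partial\{u>0\}$.

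The argument is short and essentially linear; the only mildly delicate point is the first step, where one must carefully use continuity of $u$ together with the openness of $\{u>0\}$ to pass from the purely topological hypothesis $x_0\notin\partial\{u>0\}$ to the pointwise inequality $u\le 0$ on a full ball around $x_0$. Once this localization is available, the role of assumption~\eqref{LAMBDA12} is absorbed implicitly through the continuity and subharmonicity provided by Lemmas~\ref{LEMMA2} and the regularity results, and neither the monotonicity of $\Phi_0$ nor the Bernoulli condition needs to be invoked explicitly in the replacement comparison, since the two competitors produce identical positivity sets.
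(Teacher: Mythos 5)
Your argument is correct, but it follows a different route from the paper's. The paper's proof is essentially two lines: at a point $p\in\partial\{u<0\}\setminus\partial\{u>0\}$ one finds a ball $B_r(p)$ with $u\le 0$, $u(p)=0$ and $\mathcal{L}^n\big(B_r(p)\cap\{u<0\}\big)>0$, and then invokes the global subharmonicity of $u$ (Lemma~\ref{LEMMA2}, which is where~\eqref{LAMBDA12} enters) to get $0=u(p)\le\fint_{B_r(p)}u<0$, a contradiction. You instead re-run a local replacement argument: since the harmonic replacement in a ball where $u\le0$ leaves the positivity set unchanged, the volume term cancels identically, so $u$ is actually harmonic in that ball, and the strong maximum principle at $x_0$ yields the contradiction. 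Your route has the small conceptual advantage of making explicit that the comparison step is neutral with respect to $\Phi_0$ (no monotonicity is needed there, and you in fact obtain local harmonicity, which is stronger than the subharmonicity the paper quotes); note, however, that you do not actually use the subharmonicity of Lemma~\ref{LEMMA2} as your closing remark suggests, only the continuity of $u$ from Corollary~\ref{COR:5}, and since that continuity is itself established in the paper under~\eqref{LAMBDA12} and~\eqref{VARPI}, the hypotheses are not genuinely weakened in context. The paper's version is shorter because the subharmonicity lemma is already in place; also, your case $u(x_0)<0$ is redundant, since $\{u<0\}$ is open and $x_0\in\partial\{u<0\}$ already forces $u(x_0)\ge0$, hence $u(x_0)=0$.
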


\begin{proof}
Let~$E:=\partial\{u<0\} \setminus \partial\{u>0\}$. We want to show
that~$E$ is empty
and suppose by contradiction
that~$E\neq \varnothing$. Then, there exist~$p\in\Omega$ and~$r>0$
such that~$u\le 0$ in $B_r(p)$, with~$u(p)=0$ and
${\mathcal{L}}^n\big(B_r(p)\cap \{u<0\}\big)>0$. 
Then, we use that~$u$ is subharmonic, in view of Lemma~\ref{LEMMA2},
and we obtain that 
$$ u(p)\le\int_{B_r(p)} u<0,$$
which is a contradiction.
\end{proof}

\section{Nondegeneracy of minimizers}\label{sec:nondeg} 

One of the fundamental properties of 
the minimizers is a linear lower bound. In other words, the minimzers grow
at least linearly  away from the free boundary. This is the content of
Theorem~\ref{prop-nondeg}, that we now prove:

\begin{proof}[Proof of Theorem~\ref{prop-nondeg}]
Let $\kappa\in(0,1)$. Without loss of generality we assume that~$x_0=0$. 

Notice that the critical points of a non-constant harmonic function 
have Hausdorff dimension less than $n-2$. From Sard's theorem it follows that 
the one dimensional Lebesgue measure of the critical values of $u$ is zero. 
Consequently, $\partial\{u>\e\}$ is a regular surface for a.e. $\e>0$. 
In particular, one can choose~$\e>0$ small enough 
to ensure that~$B_{\kappa r}\cap\{u>\e\}\not=\emptyset$.

Now take any such small~$\e>0$, and consider the problem 
\begin{equation}\label{constraints}
\left\{
\begin{array}{lll}
v_\e= u &\text{on}\ \partial B_r,\\
v_\e=u &\text{in}\ B_r\cap\Big( \{u<\e\}\cup \big(\{u>\e\}\setminus \Om^+_0\big)\Big),\\
v_\e=\e &\text{in} \ B_{\kappa r}\cap \{u>\e\}\cap \Om^+_0,\\
\Delta v_\e=0 &\text{in} \ D_\e^+,
\end{array}
\right.
\end{equation}
where 
\begin{equation}\label{D:e}
D_\e^+=(B_r\setminus B_{\kappa r})\cap \{u>\e\}\cap \Om^+_0.
\end{equation}
Observe that~$v_\e$ can be obtained by minimizing the Dirichlet integral 
over $B_r$ subject to the constraints in~\eqref{constraints}. 
In fact, the function
\begin{equation*}
w_\e=\left\{
\begin{array}{lll}
u &\text{in}\ \{u<\e\}\\
\e  &\text{in}\ \{u>\e\}\cap \Om^+_0\cap B_{\kappa r}\\
u & \text{elsewhere}
\end{array}
\right.
\end{equation*}
satisfies the boundary constraints in~\eqref{constraints}, 
and hence 
\begin{equation}\label{po987}
\int_{B_r}|\nabla v_\e|^2\leq \int_{B_r}|\nabla w_\e|^2\leq C,
\end{equation}
for some tame constant $C>0$ independent of $\e$. 
Also,~$v_\e$ is continuous at~$\{u=\e\}\cap (B_r\setminus B_{\kappa r})$.
We claim that
\begin{equation}\label{ST:ve}
v_\e\le u \;{\mbox{ in }}\overline{D_\e^+}.\end{equation}
Indeed, by inspection we see that~$v_\e\le u$ on~$\partial D_\e^+$.
Moreover, $D_\e^+\subseteq\{ u>\e\}$ and so~$u$ is harmonic there,
thanks to Corollary~\ref{COR:5}.
Hence, we obtain~\eqref{ST:ve} from
the comparison principle.

Now,
formulas~\eqref{po987} and~\eqref{ST:ve}
imply that~$v_\e\to v$ weakly in $W^{1, 2}(B_r)$, as~$\epsilon\to0$, 
and~$v\le u$. Furthermore,~$v$ is continuous in~$B_r$ and solves 
\begin{equation*}
\left\{
\begin{array}{lll}
v= u &\text{on}\ \partial B_r,\\
v=u &\text{in}\ B_r\cap\Big( \{u\le 0\}\cup \big(\{u>0\}\setminus \Om^+_0\big)\Big),\\
v=0 &\text{in} \ B_{\kappa r}\cap \Omega_0^+,\\
\Delta v=0 &\text{in} \ D^+,
\end{array}
\right.
\end{equation*}
where
\begin{equation}\label{def D}
D^+:=(B_r\setminus B_{\kappa r})\cap \Omega_0^+. \end{equation}
The former follows from a 
customary approximation argument as on page 437 of~\cite{ACF}, and hence omitted here. 

Now we extend~$v$ to be equal to~$u$ in~$\Om\setminus B_r$ and we 
compare~$J[u]$ with~$J[v]$ in~$\Omega$. Accordingly, the minimality of~$u$ gives that 
\begin{equation*}
\int_{\Om}|\nabla u|^2+ \Phi_0 \left(\lambda_2\int_{\Om}Q\I{u}\right)
\le  \int_{\Om} |\nabla v|^2+ \Phi_0 \left(\lambda_2\int_\Om Q\I{v}\right).
\end{equation*}
This implies that 
\begin{equation}\label{zzzz}
\int_{B_r}|\nabla u|^2-\int_{B_r} |\nabla v|^2 = 
\int_{\Omega}|\nabla u|^2-\int_{\Omega} |\nabla v|^2 
\le \Phi_0 \left(\lambda_2\int_\Om Q\I{v}\right)- \Phi_0 \left(\lambda_2\int_{\Om}Q\I{u}\right).
\end{equation}
On the other hand, recalling that~$v=u$ in~$B_r\setminus\Omega_0^+$ 
and~$v=0$ in~$B_{\kappa r}\cap\Omega_0^+$, we see that 
\begin{equation}\begin{split}
& \int_{B_r}|\nabla u|^2-|\nabla v|^2 = \int_{B_{r}\cap \Omega_0^+}|\nabla u|^2-|\nabla v|^2 \\
&\qquad\qquad =\int_{D^+}|\nabla u|^2-|\nabla v|^2 + \int_{B_{\kappa r}\cap\Omega_0^+}|\nabla u|^2, 
\end{split}\end{equation}
where~\eqref{def D} was also used. 
This and~\eqref{zzzz} give that 
\begin{equation}\label{ujergt234}
\int_{D^+}|\nabla u|^2-|\nabla v|^2 + \int_{B_{\kappa r}\cap\Omega_0^+}|\nabla u|^2 \le 
 \Phi_0 \left(\lambda_2\int_\Om Q\I{v}\right)- \Phi_0 \left(\lambda_2\int_{\Om}Q\I{u}\right).
\end{equation}

Now we write 
\begin{equation}\label{ywre543}\begin{split}
& \Phi_0 \left(\lambda_2\int_\Om Q\I{u}\right)- \Phi_0 \left(\lambda_2\int_{\Om}Q\I{v}\right) \\
=&\, \Phi_0 \left(\lambda_2\int_{\Om\setminus B_{\kappa r}} Q\I{u} +
\lambda_2\int_{B_{\kappa r}}Q\I{u}\right)- 
\Phi_0 \left(\lambda_2\int_{\Om\setminus B_{\kappa r}}Q\I{v} +\lambda_2 \int_{B_{\kappa r}} Q\I{v} \right).
\end{split}\end{equation}
Notice that $v=0$ in~$B_{\kappa r}\cap \Omega_0^+$ and~$v=u$ in~$B_{\kappa r}\setminus \Omega_0^+$, hence
\begin{equation}\label{ywre543-1}
\int_{B_{\kappa r}} Q\I{v} = \int_{B_{\kappa r}\setminus\Omega_0^+} Q\I{u}.
\end{equation}
Moreover, $v=u$ in~$\Omega\setminus B_r$ and in~$(B_r\setminus B_{\kappa r})\setminus \Omega_0^+$.
Also, in~$(B_r\setminus B_{\kappa r})\cap\Omega_0^+=D^+$, we have
that~$u>0$, by definition of~$\Omega_0^+$, 
and therefore 
$$ \I{u} \ge \I{v} \quad {\mbox{ in }} D^+.$$
This implies that 
\begin{equation}\label{ywre543-2}
\int_{\Om\setminus B_{\kappa r}}Q\I{v} \le \int_{\Om\setminus B_{\kappa r}}Q\I{u}.
\end{equation}
Plugging~\eqref{ywre543-1} and~\eqref{ywre543-2} into~\eqref{ywre543}, and using~\eqref{LAMBDA12},
we obtain that 
\begin{equation*}\begin{split}
& \Phi_0 \left(\lambda_2\int_\Om Q\I{u}\right)- \Phi_0 \left(\lambda_2
\int_{\Om}Q\I{v}\right) \\
\geq&\, \Phi_0 \left(\lambda_2\int_{\Om\setminus B_{\kappa r}} Q\I{u} + 
\lambda_2\int_{B_{\kappa r}}Q\I{u}\right)-
\Phi_0 \left(\lambda_2\int_{\Om\setminus B_{\kappa r}}Q\I{u} + 
\lambda_2\int_{B_{\kappa r}\setminus\Omega_0^+} Q\I{u} \right).
\end{split}\end{equation*}
Therefore, from the Mean Value Theorem we get  
\begin{equation}\label{retnvjfhl0}\begin{split}
& \Phi_0 \left(\lambda_2\int_{\Om}Q\I{u}\right)-\Phi_0 \left(\lambda_2
\int_{\Om}Q\I{v}\right)\\&\qquad \ge \Theta\left( \lambda_2
\int_{B_{\kappa r}}Q\I{u}- \lambda_2\int_{B_{\kappa r}\setminus \Omega_0^+}Q\I{u}\right) 
=\Theta \lambda_2\int_{B_{\kappa r}\cap\Omega_0^+}Q\I{u},
\end{split}\end{equation}
where we recalled the notation in~\eqref{IPTH},
and~\eqref{JOHN} has been used to estimate the interval in the definition of~$\Theta$.

So, from~\eqref{ujergt234} and~\eqref{retnvjfhl0} we deduce that 
\begin{equation*}
\int_{D^+}|\nabla u|^2-|\nabla v|^2 + \int_{B_{\kappa r}\cap\Omega_0^+}|\nabla u|^2 
\le -\Theta\lambda_2\int_{B_{\kappa r}\cap\Omega_0^+}Q\I{u}.\end{equation*}
Using this, we obtain that 
\begin{equation}\label{final00}
\min\left\{1, \,\Theta\right\}\, 
\int_{B_{\kappa r}\cap\Omega_{0}^+}\left( |\nabla u|^2 +\lambda_2 Q\I{u}\right) 
\le \int_{D^+}|\nabla v|^2-|\nabla u|^2.\end{equation}

Now we observe that
\begin{eqnarray*}
&& \int_{D^+}|\nabla v|^2-|\nabla u|^2 = \int_{D^+}\left(\nabla v-\nabla u\right)\cdot \left(
\nabla u-\nabla v+2\nabla v\right) \\
&&\qquad = -\int_{D^+}|\nabla u-\nabla v|^2 + 2\int_{D^+}\nabla v\cdot\left(\nabla v-\nabla u\right) 
\le 2\int_{D^+}\nabla v\cdot\left(\nabla v-\nabla u\right).
\end{eqnarray*}
Plugging this into~\eqref{final00}, recalling that~$v_\epsilon$ is a solution of~\eqref{constraints}
and using the Divergence Theorem, we obtain 
\begin{equation}\begin{split}\label{zzzz1}
& \min\left\{1, \,\Theta\right\}\, 
\int_{B_{\kappa r}\cap\Omega_{0}^+}\left( |\nabla u|^2 +\lambda_2
Q\I{u}\right) \\&\qquad \le 
2\int_{D^+}\nabla v\cdot (\nabla v-\nabla u)\\&\qquad \le
2\liminf_{\e\to 0}\int_{D_\e^+}\nabla v_\epsilon \cdot (\nabla v_\epsilon -\nabla u)
\\&\qquad = 2\liminf_{\e\to 0}\left[\int_{D_\e^+} {\rm{div}}\left(\nabla v_\epsilon(v_\epsilon-u)\right)
-\int_{D_\e^+}\Delta v_\epsilon (v_\epsilon-u)\right] 
\\&\qquad = 2\liminf_{\e\to 0} \int_{\partial D_\epsilon^+}
\frac{\partial v_\epsilon}{\partial\nu}\,(v_\epsilon-u)\\&\qquad  \le
2\liminf_{\e\to 0} \int_{\partial B_{\kappa r}\cap\{u>\epsilon\}\cap\Omega_0^+} 
\left|\frac{\partial v_\epsilon}{\partial\nu}\right|\,(u-\epsilon)
.\end{split}\end{equation}
Notice that we have performed an integration by parts, which actually needs a justification, 
since~$D_\epsilon^+$ has not smooth boundary. This can be done 
using an approximation of~$D_\e^+$ by domains whose boundaries are~$C^\infty$ curves, as in~\cite{ACF} 
(see in particular page 437 there).

Our next goal is to estimate the quantity
$$ 2\liminf_{\e\to 0} \int_{\partial B_{\kappa r}\cap\{u>\epsilon\}\cap\Omega_0^+} 
\left|\frac{\partial v_\epsilon}{\partial\nu}\right|\,(u-\epsilon).$$ 
For this, we set~$\kappa':=(\kappa+1)/2$
and we introduce the barrier $b$ as follows:
\begin{equation}\label{b:eq}
\left\{
\begin{array}{lll}
b=\e+\displaystyle\sup_{\partial B_{\kappa'r}\cap\Om_0^+}v_\e &\text{on}\ \partial B_{\kappa' r},\\
b= \e &\text{on }\ \partial B_{\kappa r},\\
\Delta b=0 &\text{in} \ B_{\kappa' r}\setminus B_{\kappa r}.
\end{array}
\right.
\end{equation}
Notice that~$b\ge\epsilon$ on~$\partial \left(B_{\kappa' r}\setminus B_{\kappa r}\right)$
and so, by comparison principle, we have that
\begin{equation}\label{VHA}
{\mbox{$b\ge \epsilon$
in~$\overline{B_{\kappa' r}}\setminus B_{\kappa r}$.}}\end{equation}
Recalling~\eqref{D:e}, we set
$$ D_{\e,*}^+:=D_\e^+\cap B_{\kappa' r}=
(B_{\kappa' r}\setminus B_{\kappa r})\cap \{u>\e\}\cap \Om^+_0$$
and we
claim that
\begin{equation}\label{THC}
{\mbox{$v_\e\leq b$ on~$\partial D^+_{\epsilon,*}$}}.\end{equation}
For this, we use the elementary formula, given sets~$A$ and $B$,
\begin{equation} \label{HIN}
\partial(A\cap B)\subseteq \Big( \partial A\cap\overline{B}\Big)
\cup \Big( \partial B\cap\overline{A}\Big).\end{equation}
This gives that
$$ \partial D^+_{\epsilon,*}\subseteq D_1\cup D_2,$$
with
\begin{eqnarray*}
&& D_1:=\big(\partial B_{\kappa r}\cup\partial B_{\kappa'r}\big)\cap
\{ x\in\overline{\Omega_0^+} {\mbox{ s.t. }} u(x)\ge\e\}\cap \overline{D_\e^+}
\\
{\mbox{and }}&& D_2:= \big(\overline{B_{\kappa'r}}\setminus B_{\kappa r}\big)
\cap \big(\partial \{ x\in{\Omega_0^+} {\mbox{ s.t. }} u(x)>\e\}\big)\cap \overline{D_\e^+}.
\end{eqnarray*}
Now, in light of~\eqref{b:eq} and~\eqref{constraints},
if~$x\in D_1$, we have that either~$x\in \partial B_{\kappa'r}
\cap\{u\ge\epsilon\}\cap \overline{\Om^+_0}$,
and then~$b(x)\ge \sup_{\partial B_{\kappa'r}\cap\Om_0^+}v_\e\ge v_\e(x)$,
or~$x\in \partial B_{\kappa r}\cap
\{u\ge\epsilon\}\cap \overline{\Om^+_0}$
and~$b(x)=\e=v_\e(x)$. Accordingly,
\begin{equation}\label{LA:A}
{\mbox{$b\geq v_\e$ in~$D_1$.}}
\end{equation}
On the other hand,
using again~\eqref{HIN}, one sees that
$$ \partial \{ x\in{\Omega_0^+} {\mbox{ s.t. }} u(x)>\e\}\;\subseteq\;
(\partial\Omega_0^+)\cup(\partial\{u>\e\})\;\subseteq\;
(\partial\Omega)\cup(\partial\{u>0\})\cup(\partial\{u>\e\})$$
and so
$$ D_2\subseteq \big(\overline{B_{\kappa'r}}\setminus B_{\kappa r}\big)\cap\Big(\{u=0\}
\cup\{u=\e\}\Big)\cap \overline{D_\e^+}.$$
As a consequence, if~$x\in D_2$, then~$x\in\overline{B_{\kappa'r}}
\setminus B_{\kappa r}$
and~$u(x)\le\e$. This and~\eqref{VHA} give that~$u\le b$ in~$D_2$.
Hence, in view of~\eqref{ST:ve}, we find that~$v_\e\le b$ in~$D_2$.
This, together with \eqref{LA:A}, proves~\eqref{THC}.

\smallskip

Now, by~\eqref{constraints}, \eqref{b:eq},
\eqref{THC} and the comparison principle, we conclude that~$
v_\e\leq b$ in~$D^+_{\epsilon,*}$.
Therefore, since~$v_\e=\e=b$ on~$\partial B_{\kappa r}\cap \{u>\e\}\cap \Om^+_0$,
we find that
\begin{equation}\label{D1}
{\mbox{
$|\nabla v_\e|\leq |\nabla b|$
on~$\partial B_{\kappa r}\cap \{u>\e\}\cap \Om^+_0.$}}\end{equation}

As a matter of fact, we can explicitly solve~$b$ in~\eqref{b:eq},
and we have that
$$ b(x)= \frac{\sup_{\partial B_{\kappa'r}\cap\Om_0^+} v_\e}{ 
\Psi(\kappa r)-\Psi(\kappa'r)}\,
\big(\Psi(\kappa'r)-\Psi(|x|)\big)+\e+
\sup_{\partial B_{\kappa'r}\cap\Om_0^+} v_\e,$$
where~$\Psi$ is the radially decreasing fundamental solution of the
Laplace operator in~$\R^n$ (up to normalizing constants, $\Psi(\rho)=\rho^{2-n}$
if~$n\ge3$ and~$\Psi(\rho)=-\log\rho$ if~$n=2$).

Consequently, recalling also~\eqref{ST:ve},
\begin{equation}\label{KA90AK}
\sup_{\partial B_{\kappa r}}|\nabla b| = C
\frac{\sup_{\partial B_{\kappa'r}\cap\Om_0^+} v_\e}{ \Psi(\kappa r)-\Psi(
\kappa'r)}\,|\nabla\Psi(\kappa r)|
\le C \frac{\sup_{ \partial B_{\kappa'r}\cap \Om^+_0}u}{r},\end{equation}
for some~$C>0$ possibly depending on~$\kappa$ and
different from step to step.

Now, we observe that, extending $u$ by zero outside~$\Omega_0^+$,
we obtain a 
nonnegative subharmonic function.
More precisely, if we set~$\tilde u:=u\chi_{\Omega_0^+}$,
given any nonnegative~$\phi\in C^\infty_0(\Omega)$, we have that
\begin{eqnarray*}
&&\int_\Omega \tilde u\,\Delta\phi =\lim_{\e\to0}
\int_{\Omega_0^+\cap\{u>\e\}} u\,\Delta\phi =
\lim_{\e\to0}
\int_{\Omega_0^+\cap\{u>\e\}} \Big(\phi\,\Delta u + 
{\rm div}\,(u\nabla\phi-\phi\nabla u)
\Big)\\
&&\qquad= 0+\lim_{\e\to0} \int_{\overline{\Om_0^+}\cap \partial\{u>\e\} }
u\frac{\partial\phi}{\partial\nu}-\phi\frac{\partial u}{\partial\nu}
=-\lim_{\e\to0}\int_{\overline{\Om_0^+}\cap \partial\{u>\e\} }
\phi\frac{\partial u}{\partial\nu}\ge0,
\end{eqnarray*}
and so~$\tilde u$ is subharmonic.
Hence the weak maximum principle
can be applied to the function~$\tilde u$, and so we conclude that, for any~$\sigma\in(0,1)$
and any~$x\in B_{\sigma r}$,
\begin{eqnarray*}
&& u(x)\chi_{\Om_0^+}(x)=\tilde u(x)\le
\fint_{B_{(1-\sigma)r}(x)} \tilde u\le
\left( \fint_{ B_{(1-\sigma)r} (x )} \tilde u^2\right)^{\frac12}
\le
\left( \frac{1}{{\mathcal{L}}^n(B_{(1-\sigma)r})} \int_{B_r} \tilde u^2\right)^{\frac12}\\&&\qquad\quad
\le
\left( \frac{{\mathcal{L}}^n(B_r\cap\Om_0^+)}{{\mathcal{L}}^n(B_{(1-\sigma)r})} \fint_{B_r\cap\Om_0^+} u^2\right)^{\frac12}
\le
\left( \frac{r^n}{(1-\sigma)^nr^n} \fint_{B_r\cap\Om_0^+} u^2\right)^{\frac12}
.\end{eqnarray*}
Thus,
\begin{equation}\label{weak-max-zzz}
\sup_{B_{\sigma r}\cap\Om_0^+} u\leq \frac{1}{(1-\sigma)^{\frac n2}}
\left(\fint_{B_r\cap\Om_0^+} u^2
\right)^{\frac12} = \frac{r\gamma}{(1-\sigma)^{\frac{n}{2}}},
\end{equation}
where we have set 
$$\gamma :=\left(\frac1{r^{2}}\fint_{B_r\cap \Om^+_0}u^2\right)^{\frac12}.$$
{F}rom this and~\eqref{KA90AK} we conclude that
\begin{equation}\label{D2}
\sup_{\partial B_{\kappa r}}|\nabla b|
\le C \gamma.\end{equation}
{F}rom \eqref{D1} and~\eqref{D2}, it follows that
\begin{equation*}
\begin{split}
& \int_{\partial B_{\kappa r}\cap\{u>\epsilon\}\cap\Omega_0^+} 
\left|\frac{\partial v_\epsilon}{\partial\nu}\right|\,(u-\epsilon) \le
\int_{\partial B_{\kappa r}\cap\{u>\epsilon\}\cap\Omega_0^+} 
\left|\nabla v_\epsilon\right|\,(u-\epsilon) \\
&\qquad\le
\int_{\partial B_{\kappa r}\cap\{u>\epsilon\}\cap\Omega_0^+} 
\left|\nabla b\right|\,(u-\epsilon) \le
C\gamma
\int_{\partial B_{\kappa r}\cap\{u>\epsilon\}\cap\Omega_0^+} 
(u-\epsilon) 
\\ &\qquad\le
C\gamma
\int_{\partial B_{\kappa r}\cap\Omega_0^+} u.
\end{split}
\end{equation*}
So, making use of \eqref{zzzz1}, we obtain 
\begin{equation}\label{TER}
\int_{B_{\kappa r}\cap \Om^+_0}\big(|\nabla u|^2+\lambda_2Q \I{u}\big)\le
C\gamma
\int_{\partial B_{\kappa r}\cap\Omega_0^+} u.
\end{equation}
Now we recall the elementary trace inequality for nonnegative functions~$f$,
see e.g. Theorem~1(ii) on page~258 of~\cite{Evans},
namely
\begin{equation}\label{JH:CA}
\int_{\partial B_{\rho}} f\le C
\,\left[\int_{B_{\rho}}|\nabla f|+\frac 1\rho\int_{B_{\rho}}f\right].
\end{equation}
We apply~\eqref{JH:CA} to~$f:=u\chi_{\Om_0^+}$ and~$\rho:=\kappa r$. Then,
since~$u$ vanishes along~$\partial\Om_0^+$,
$$\int_{\partial B_{\kappa r}\cap \Om_0^+} u\le C
\,\left[\int_{B_{\kappa r}\cap\Om_0^+}|\nabla u|+\frac 1{r}\int_{B_{\kappa r}\cap\Om_0^+} u\right],$$
where~$C>0$ now may also depend on~$\kappa$.

Hence, in light of~\eqref{TER}, we find that
\begin{equation}\label{JAK-0A}
\int_{B_{\kappa r}\cap \Om^+_0}\big(|\nabla u|^2+\lambda_2Q\I{u}\big)\le
C\gamma\left[\int_{B_{\kappa r}\cap \Om^+_0}|\nabla u|+\frac 1r\int_{B_{\kappa r}\cap \Om^+_0}u\right].
\end{equation}
Now we point out that, by the Cauchy-Schwarz inequality,
$$ 2\int_{B_{\kappa r}\cap \Om^+_0}|\nabla u|\le
\int_{B_{\kappa r}\cap \Om^+_0}\big(|\nabla u|^2+1\big).$$
This and~\eqref{JAK-0A} give that
\begin{eqnarray*}
&& \int_{B_{\kappa r}\cap \Om^+_0}\big( |\nabla u|^2+\lambda_2Q\I{u}\big)\\
&\le & C\gamma\int_{B_{\kappa r}\cap \Om^+_0}\big(|\nabla u|^2+\I{u}\big)
+ \frac{C\gamma }{r}\sup_{B_{\kappa r}\cap \Om^+_0} u\int_{B_{\kappa r}\cap \Om^+_0}\I{u}\\
&\le & C\gamma\max\left\{1, \,\frac1{Q_1\lambda_2}\right\}\left[\int_{B_{\kappa r}
\cap \Om^+_0}\big(|\nabla u|^2+\lambda_2Q\I{u}\big)
+ \frac{1 }{r}\sup_{B_{\kappa r}\cap \Om^+_0} u
\int_{B_{\kappa r}\cap \Om^+_0}Q\I{u}\right].\end{eqnarray*}
In consequence of this and~\eqref{weak-max-zzz}, we obtain that
$$ \int_{B_{\kappa r}\cap \Om^+_0}\big(|\nabla u|^2+\lambda_2Q\I{u}\big)\le
C\gamma(1+\gamma)\int_{B_{\kappa r}\cap \Om^+_0}\big(|\nabla u|^2+\lambda_2
Q\I{u}\big).$$
If $\gamma$ is sufficiently small we conclude that $u$ vanishes
identically in $B_{\kappa r}\cap\Om_0^+$, as desired.
\end{proof}

\section{Density theorems and clean ball conditions}\label{sec:density}

In this section we prove that the positive phase~$\{u>0\}$ occupies a positive
density near the free boundary points. 

\begin{thm}\label{HJsdfg}
Assume that~$u$ is a minimizer of $J$
as in~\eqref{EN:FUNCT}. 
Let~$\varpi>0$ and assume that~$
{\mathcal{L}}^n
\big(\Omega^+(u)\big)\ge\varpi$.
Let~$D\Subset \Omega$.
Assume that~$x_0\in \Gamma=\partial\Omega^+(u)$
and let~$r>0$ be such that~$B_r(x_0)\subseteq D$.

Then, there exist~$c_1\in(0,1)$, possibly depending
on~$\varpi$, $Q$, $\Om$ and~$D$,
and~$y_0\in B_r(x_0)$ such that
\begin{equation}\label{JK:DEaA1}
B_{c_1 r}(y_0)\subseteq B_r(x_0)\cap \Omega^+(u).\end{equation}
Moreover, there exists~$c_2>0$, possibly depending
on~$\varpi$, $Q$, $\Om$ and~$D$, such that
\begin{equation}\label{JK:DEaA2}
{\mathcal{L}}^n \big(B_r(x_0)\cap \Omega^+(u)\big)\ge c_2 r^n.\end{equation}
\end{thm}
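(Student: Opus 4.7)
The plan is to combine the nondegeneracy statement of Theorem~\ref{prop-nondeg} with the Lipschitz regularity of Theorem~\ref{COR:7BIS}: nondegeneracy forces $u$ to attain a value of order $r$ somewhere inside $B_r(x_0)$, and Lipschitz continuity then propagates positivity to a whole ball of radius comparable to $r$ around that point. The density estimate \eqref{JK:DEaA2} is an immediate consequence of the clean ball condition \eqref{JK:DEaA1} by comparing volumes, so the whole argument reduces to producing the ball $B_{c_1 r}(y_0)$.

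First I would fix a connected component $\Omega^+_0$ of $\Omega^+(u)$ whose closure contains $x_0$, so that $x_0 \in \partial \Omega^+_0$ (possible because $x_0 \in \Gamma = \partial \Omega^+(u)$). I then apply Theorem~\ref{prop-nondeg} at $x_0$ with radius $r/2$ and $\kappa = 1/2$. Assuming $r$ is small enough that ${\mathcal L}^n(B_{r/2}(x_0)) \leq \varpi/2$, hypothesis \eqref{JOHN} is automatic, since then $|\Omega^+(u) \setminus B_{r/2}(x_0)| \geq \varpi - \varpi/2 = \varpi/2$; the remaining structural assumptions are in force throughout this section. Since $x_0 \in \partial \Omega^+_0$, the set $B_{r/4}(x_0) \cap \Omega^+_0$ is nonempty and $u > 0$ there, so the contrapositive of the nondegeneracy conclusion gives
$$
\fint_{B_{r/2}(x_0) \cap \Omega^+_0} u^2 \;\geq\; \bar c\, r^2
$$
for some $\bar c > 0$ depending on $\varpi, Q, \Omega, D$. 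In particular, $\sup_{B_{r/2}(x_0) \cap \Omega^+_0} u \geq \sqrt{\bar c}\, r$.

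Next I would pick $x_1 \in \overline{B_{r/2}(x_0)} \cap \Omega^+_0$ with $u(x_1) \geq \tfrac{1}{2}\sqrt{\bar c}\, r$. Letting $L$ be the local Lipschitz constant of $u$ on $D$ from Theorem~\ref{COR:7BIS} and setting $\delta := \sqrt{\bar c}/(4L)$, for every $x \in B_{\delta r}(x_1)$ the Lipschitz bound yields
$$
u(x) \;\geq\; u(x_1) - L \delta r \;\geq\; \tfrac{1}{4}\sqrt{\bar c}\, r \;>\; 0,
$$
so $B_{\delta r}(x_1) \subseteq \Omega^+(u)$. Taking $c_1 := \min\{\delta,\, 1/2\}$ and $y_0 := x_1$, the triangle inequality gives $|y_0 - x_0| + c_1 r \leq r/2 + r/2 = r$, so $B_{c_1 r}(y_0) \subseteq B_r(x_0)$. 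This establishes \eqref{JK:DEaA1}, and \eqref{JK:DEaA2} follows immediately with $c_2 := {\mathcal L}^n(B_1)\, c_1^n$.

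The main obstacle is that the clean argument above applies directly only when $r \leq r_*$ for some threshold $r_* = r_*(\varpi)$ that ensures \eqref{JOHN}. For $r \in (r_*, \mathrm{diam}(D)]$ I would rerun the argument at the fixed radius $r_*$ in place of $r/2$, producing a ball of fixed radius $c_1 r_*$ contained in $B_{r_*}(x_0) \subseteq B_r(x_0) \cap \Omega^+(u)$. Since $r \leq \mathrm{diam}(D)$, we have $c_1 r_* \geq \bigl(c_1 r_* / \mathrm{diam}(D)\bigr)\, r$, so by replacing $c_1$ with $\min\{c_1,\, c_1 r_*/\mathrm{diam}(D)\}$ and adjusting $c_2$ accordingly, both conclusions survive with constants depending only on $\varpi, Q, \Omega, D$, as claimed.
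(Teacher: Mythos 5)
Your proposal is correct and follows essentially the same route as the paper: nondegeneracy (Theorem~\ref{prop-nondeg}) produces a point in $B_{r/2}(x_0)$ where $u\gtrsim r$, and the optimal regularity of $u$ then yields a clean ball of radius comparable to $r$ around it, from which \eqref{JK:DEaA2} follows by a volume count. The only cosmetic difference is that you propagate positivity via the Lipschitz bound of Theorem~\ref{COR:7BIS}, whereas the paper takes the maximum point of $u$ on $\overline{B_{r/2}(x_0)}$ and invokes the linear-growth estimate of Corollary~\ref{COR:6} to bound its distance to $\Gamma$ from below; your explicit treatment of hypothesis \eqref{JOHN} and of the large-$r$ regime is bookkeeping the paper leaves implicit.
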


\begin{proof} Obviously, we have that~\eqref{JK:DEaA2}
is a direct consequence of~\eqref{JK:DEaA1}, so we focus
on the proof of~\eqref{JK:DEaA1}. To this aim, we
recall that~$u$ is continuous, thanks to Corollary~\ref{COR:5},
hence we can take~$y_0\in\overline{ B_{r/2}(x_0)}$ such that
\begin{equation}\label{F:uIA} u(y_0)=\max_{ \overline{ B_{r/2}(x_0)} } u.\end{equation}
We take~$d:={\rm dist}(y_0,\Gamma)$ and~$z_0\in\Gamma\cap \partial{B_d(y_0)}$.
Notice that, since~$x_0\in\Gamma$, we have that
\begin{equation}\label{bAG}
d\le |x_0-y_0|\le \frac{r}2.\end{equation}
Hence, we are in the position of applying
Corollary~\ref{COR:6}, and we obtain that
\begin{equation}\label{A:p10}
u(y_0)\le Cd,
\end{equation}
for some~$C>0$.
On the other hand, from Theorem~\ref{prop-nondeg} and~\eqref{F:uIA},
$$ cr^2 \le\fint_{B_{r/2}(x_0)} u^2\le u^2(y_0),$$
for some~$c>0$. Comparing this with~\eqref{A:p10}, we conclude that
$$ d\ge \frac{u(y_0)}{C}\ge \frac{\sqrt{c} r}{C}=c_1\, r,$$
for some~$c_1>0$. As a matter of fact, by~\eqref{bAG}, 
we have that~$c_1\in(0,\,1/2)$. This construction establishes~\eqref{JK:DEaA1}.
\end{proof}

\section{Blow-up limits}\label{sec:blowup}

In this section, we consider the blow-up of a minimizer at a free
boundary point. We will show that, in the limit, we obtain a minimizer
for the Alt-Caffarelli problem in~\eqref{EN:FUNCT:AC}.
This phenomenon plays an important role in our analysis, since
it transforms the original nonlinear free boundary problem into
a linear one, in the blow-up limit: that is, in our framework,
the blow-up possesses an additional linearization feature.

To this extent,
for any~$x_0\in\Gamma$ we consider the blow-up sequence 
of~$u$ at~$x_0$, that is
\begin{equation}\label{defblow}
u_k(x) := \frac{u(x_0 + \rho_k x)}{\rho_k},\end{equation}
where~$\rho_k\to0$ as~$k\to+\infty$. 

We have the following convergence result (see e.g. Proposition~8.1 in~\cite{DK}
for the proof): 

\begin{prop}\label{tech-2}
Let $x_0\in \Gamma$ and $u_k$ a the blow-up sequence, 
as introduced in~\eqref{defblow}. 

Then there exists a blow-up limit $u_0:\R^n\to \R$, which is
continuous and with linear growth, such that, 
up to a subsequence, as~$k\to+\infty$,
\begin{itemize}
\item $u_k\to u_0$ in $C_{loc}^\alpha(\R^n)$ for any $\alpha\in (0, 1)$, 
\item $\na u_k\to \na u_0$ weakly in $L^{q}_{loc}(\R^n)$ for any $q>1$,
\item  $\fb{u_k}\to \fb{u_0}$ locally in Hausdorff distance, 
\item $\chi_{\{u_k>0\}}\to \chi_{\{u_0>0\}}$ in $L_{loc}^1(\R^n)$.
\end{itemize}
\end{prop}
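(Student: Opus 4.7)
The plan is to establish the four convergences in sequence, drawing essentially on the Lipschitz estimate (Theorem~\ref{COR:7BIS}), the nondegeneracy (Theorem~\ref{prop-nondeg}), and the clean-ball/positive-density statement (Theorem~\ref{HJsdfg}) already obtained.

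First I would invoke Theorem~\ref{COR:7BIS} to obtain a Lipschitz constant $L$ for $u$ on some neighborhood $D\Subset\Om$ containing $x_0$. Since $x_0\in\Gamma$ one has $u(x_0)=0$, hence $u_k(0)=0$, and for any $R>0$ and $k$ large enough that $B_{R\rho_k}(x_0)\subset D$,
\[|u_k(x)-u_k(y)|=\rho_k^{-1}|u(x_0+\rho_kx)-u(x_0+\rho_ky)|\le L|x-y|\quad\hbox{for all }x,y\in B_R.\]
The family $\{u_k\}$ is therefore uniformly Lipschitz and locally uniformly bounded. Arzel\`a--Ascoli together with a diagonal extraction then produces a Lipschitz function $u_0$ with $u_0(0)=0$ and $|u_0(x)|\le L|x|$ to which a subsequence converges in $C^0_{\mathrm{loc}}(\R^n)$, and the standard interpolation inequality between $C^0$ and $C^{0,1}$ upgrades this to $C^{0,\alpha}_{\mathrm{loc}}$ for every $\alpha\in(0,1)$. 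For the gradients, the uniform $L^\infty_{\mathrm{loc}}$ bound inherited from the Lipschitz estimate makes $\{\na u_k\}$ weakly precompact in $L^q_{\mathrm{loc}}$ for each $q\in(1,\infty)$; a further diagonal extraction, combined with integration by parts against $C^\infty_0$ fields and the $C^0$ convergence, identifies the weak limit with $\na u_0$.

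For the Hausdorff convergence of the free boundaries I would argue by contradiction in both directions. If some $y\in\fb{u_0}$ stayed at positive distance $\varepsilon$ from $\fb{u_k}$ along a subsequence, then $B_\varepsilon(y)$ would lie entirely in one of $\po{u_k}$ or $\{u_k\le 0\}$; the second alternative is ruled out by the existence (via the $C^0$ convergence) of a point arbitrarily close to $y$ where $u_0$ is strictly positive, while in the first Corollary~\ref{COR:5} makes $u_k$ harmonic in $B_\varepsilon(y)$, so the $C^0$ limit $u_0$ is harmonic, nonnegative and vanishes at $y$, and the strong maximum principle forces $u_0\equiv0$ in $B_\varepsilon(y)$, contradicting $y\in\fb{u_0}$. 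Conversely, if $y_k\in\fb{u_k}\to y$, continuity of $u_k$ (Corollary~\ref{COR:5}) forces $u_k(y_k)=0$ and hence $u_0(y)=0$; Theorem~\ref{prop-nondeg} applied to the rescaled minimizer $u_k$ yields, for each small $r>0$ and $k$ large, a point $z_k\in B_r(y_k)$ with $u_k(z_k)\ge cr$, whose limit $z\in\overline{B_r(y)}$ satisfies $u_0(z)\ge cr/2>0$, so $y\in\overline{\po{u_0}}\cap\{u_0=0\}=\fb{u_0}$.

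Finally, for the $L^1_{\mathrm{loc}}$ convergence of $\chi_{\po{u_k}}$ I would decompose $B_R=\po{u_0}\cup\{u_0<0\}\cup\{u_0=0\}$; on each of the two open phases uniform convergence makes $\chi_{\po{u_k}}$ coincide with $\chi_{\po{u_0}}$ for $k$ large, so that the discrepancy is supported in $\{u_0=0\}\cap\po{u_k}$. Combining the Hausdorff convergence just established with the clean-ball estimate of Theorem~\ref{HJsdfg} applied to the rescaled minimizers $u_k$ (which retains its form under blow-up) confines this set to an arbitrarily thin neighborhood of $\fb{u_0}\cap B_R$ whose Lebesgue measure vanishes as $k\to\infty$. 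The delicate step is precisely this last one: the uniform Lipschitz estimate controls the $u_k$ as functions, but the convergence of the geometric objects $\fb{u_k}$ and $\chi_{\po{u_k}}$ genuinely requires the quantitative nondegeneracy and positive density in order to rule out simultaneous thinning or spontaneous appearance of phases in the limit.
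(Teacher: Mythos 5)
Your treatment of the first two bullets (uniform Lipschitz bound from Theorem~\ref{COR:7BIS}, Arzel\`a--Ascoli plus interpolation, weak $L^q_{loc}$ compactness of the gradients and identification of the limit by testing against $C^\infty_0$ fields) is fine, and your Hausdorff argument is essentially the standard one: the inclusion of $\fb{u_0}$ in a neighborhood of $\fb{u_k}$ via harmonicity and the strong maximum principle, and the reverse inclusion via the nondegeneracy of Theorem~\ref{prop-nondeg} transferred to the scale $\rho_k$ (note, though, that this step silently uses the extra hypothesis~\eqref{IPTH} on $\Phi_0'$, which is not listed in the proposition). Be aware that the paper itself does not prove this proposition but cites Proposition~8.1 of~\cite{DK}, and explicitly remarks that Lebesgue density estimates are \emph{not} needed there, whereas your argument leans on Theorem~\ref{HJsdfg}.

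The genuine gap is in the last bullet. First, Theorem~\ref{HJsdfg} only gives information at free boundary points and is not the tool that excludes spurious positivity of $u_k$ deep inside $\{u_0\le 0\}$: if $u_k(x)>0$ with $\dist(x,\fb{u_k})\ge\delta/2$, what you need is the linear growth of Lemma~\ref{BY BELOW} (a full ball $B_{\delta/2}(x)\subset\{u_k>0\}$ forces $u_k(x)\ge c\,\iota\,\delta$, contradicting $u_k(x)\to u_0(x)=0$), again under an \eqref{IOTA}-type assumption; combined with the Hausdorff convergence this confines $\{u_k>0\}\cap\{u_0=0\}\cap B_R$ to a $\delta$-neighborhood of $\fb{u_0}$. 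Second, and more seriously, even granting this confinement you cannot conclude: as $\delta\to0$ the measure of that neighborhood tends to ${\mathcal{L}}^n\big(\fb{u_0}\cap \overline{B_R}\big)$, and you have nowhere shown that this is zero. You cannot invoke Theorem~\ref{lem-ACF-blowup} (which would make $u_0$ an Alt--Caffarelli minimizer) because its proof uses the present proposition. The missing ingredient is a density statement for the \emph{limit}: nondegeneracy passes to $u_0$, and together with the Lipschitz bound it produces a clean ball $B_{cr}(y)\subset\{u_0>0\}\cap B_{2r}(x)$ at every $x\in\fb{u_0}$; hence no point of $\fb{u_0}$ can be a Lebesgue density point of $\fb{u_0}$, which forces ${\mathcal{L}}^n(\fb{u_0})=0$ and then your dominated convergence argument (or, alternatively, the observation that weak-$*$ convergence of $\chi_{\{u_k>0\}}$ to a characteristic function upgrades itself to strong $L^1_{loc}$ convergence) closes the proof. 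As written, the proposal asserts the conclusion of this step without the argument that makes it true.
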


We remark that in the proof of Proposition~\ref{tech-2}
we do not need Lebesgue density estimates.
The statement above can be also enhanced, giving the pointwise
convergence of the gradients, as given by the next result:

\begin{lem}\label{L15}
Let $x_0\in \Gamma$. Let~$u_k$ be the blow-up sequence, 
as introduced in~\eqref{defblow}, and~$u_0$ the blow-up limit 
given by Proposition~\ref{tech-2}. 
Then~$\nabla u_k\to \nabla u_0$
a.e. in~$\R^n$, as~$k\to+\infty$.

In addition, if~$p\in\{u_0\ne0\}$, we have that~$\nabla 
u_k\to \nabla u_0$ as~$k\to+\infty$ uniformly in a neighborhood of~$p$.
\end{lem}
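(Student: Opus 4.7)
The plan is to split $\R^n$ into the open pieces $\{u_0>0\}$, $\{u_0<0\}$, $\mathrm{int}\{u_0=0\}$, together with the residual topological boundary, and treat each separately. The first two pieces will yield uniform $C^1$ convergence, which simultaneously gives the second (uniform) assertion of the lemma and a.e.\ convergence there; the residual boundary is Lebesgue negligible; the interior of $\{u_0=0\}$ requires a non-degeneracy argument. Observe that the weak $L^q_{\rm loc}$ convergence of gradients from Proposition~\ref{tech-2} is by itself insufficient, since it gives pointwise convergence only after a further subsequence and does not deliver the uniform statement.

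For $p\in\{u_0>0\}$, continuity gives $u_0\ge \tfrac12 u_0(p)$ on some $\overline{B_\rho(p)}$, and the $C^\alpha_{\rm loc}$ convergence of Proposition~\ref{tech-2} forces $u_k>0$ on $\overline{B_\rho(p)}$ for all large $k$. Since $u_k$ is a minimizer of the naturally rescaled functional, the proof of Corollary~\ref{COR:5} goes through verbatim (indeed the same argument also yields harmonicity on the negative phase, because perturbations supported inside $\Om^-(u_k)$ leave both $\M_1(u_k)$ and $\M_2(u_k)$ untouched), so $u_k$ is harmonic on $B_\rho(p)$. Standard interior estimates for harmonic functions, together with the uniform $C^0$ convergence $u_k\to u_0$, then give $\nabla u_k\to\nabla u_0$ in $C^\infty_{\rm loc}(B_\rho(p))$, in particular uniformly in a neighborhood of $p$. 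The symmetric argument covers $p\in\{u_0<0\}$, which establishes the second assertion of the lemma and a.e.\ convergence on $\{u_0>0\}\cup\{u_0<0\}$.

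The remaining set $\{u_0=0\}$ decomposes into its topological boundary, contained in $\fb{u_0}\cup\partial\{u_0<0\}$, and its interior. The boundary is Lebesgue negligible: by Theorem~\ref{lem-ACF-blowup} the limit $u_0$ minimizes the linearized functional $J_0$, hence Theorem~\ref{GMT-00}(iii) applies and gives local finiteness of the $\H^{n-1}$ measure of $\fb{u_0}$ (and of $\partial\{u_0<0\}$ by the symmetric statement). On $\mathrm{int}\{u_0=0\}$ I pick $B\Subset\mathrm{int}\{u_0=0\}$; the uniform convergence gives $\sup_B|u_k|\to 0$. If some $y_0\in\fb{u_k}\cap B'$ existed for $B'\Subset B$ along a subsequence, then the non-degeneracy~\eqref{PNOAL9} applied to $u$ at the original point $x_0+\rho_k y_0\in\fb u$ (which lies in any fixed subdomain $D\Subset\Om$ around $x_0$ once $k$ is large) would rescale, by the scale-invariance of $r^{-2}\fint_{B_r}u^2$, to $\fint_{B_r(y_0)}u_k^2\ge c r^2$ with a fixed $c>0$, contradicting the uniform smallness of $u_k$ on $B$. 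The analogous lower bound for the negative phase is obtained by repeating the proof of Theorem~\ref{prop-nondeg} with the sign convention reversed, using the harmonicity of $u$ on $\Om^-$. Hence for $k$ large $u_k$ has definite sign on $B'$, is therefore harmonic there, and the uniform convergence $u_k\to 0$ upgrades to $\nabla u_k\to 0=\nabla u_0$ uniformly on a smaller ball by harmonic gradient estimates.

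The main obstacle is the non-degeneracy step inside $\mathrm{int}\{u_0=0\}$: one must verify that the constant $c$ in~\eqref{PNOAL9} can be chosen independently of $k$ for both phases of $u_k$. Both points reduce to rescaling computations based on the original minimizer $u$ and the $C^1$ regularity of $\Phi_0$ at $\M_2(u)$ (which makes the structural hypotheses~\eqref{JOHN} and~\eqref{IPTH} stable under blow-up); once these uniform constants are in hand, the three pieces combine to give the a.e.\ convergence as well as the second assertion of the lemma.
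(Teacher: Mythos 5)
Your decomposition and your treatment of the set $\{u_0\ne0\}$ (uniform sign for large $k$, harmonicity, interior estimates) are fine, and citing Theorem~\ref{lem-ACF-blowup} inside this proof creates no circularity, since its proof only relies on Proposition~\ref{tech-2}. The genuine problem is the step you yourself flag as the main obstacle: there is no nondegeneracy for the negative phase, and it cannot be obtained by ``repeating the proof of Theorem~\ref{prop-nondeg} with the sign reversed''. That proof hinges on the volume--term gain $\Theta\,\lambda_2\int Q\chi_{\{u>0\}}$ in \eqref{retnvjfhl0}, i.e.\ on the fact that truncating $u^+$ strictly decreases $\Phi_0(\M_2)$ because $\Phi_0'\ge\Theta>0$. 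The functional does not distinguish $\{u<0\}$ from $\{u=0\}$: replacing $u^-$ by $0$ changes neither $\M_1$ nor $\M_2$, so the reversed-sign comparison produces no volume gain and hence no lower bound, and the harmonicity of $u$ in $\Om^-(u)$ alone forces nothing. Indeed the negative phase may be empty (for $\lambda_1=0$ and $\bar u\ge0$ one has $u\ge0$), and the paper explicitly allows $u^-$ to be degenerate in the sense of \eqref{DEG} (see Section~\ref{sec:FBregularity2D}; only $u^+$ is always nondegenerate, cf.\ Lemma~\ref{upng}). So the claimed ``analogous lower bound for the negative phase'' is false, and with it your exclusion of $\partial\{u_k<0\}$ from $B'$, which you do need: without it, ``definite sign, hence harmonic'' fails across the moving boundary $\partial\{u_k<0\}$, and $C^0$-smallness of $u_k$ does not upgrade to gradient smallness near that set.

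The gap is repairable without any negative-phase nondegeneracy: the sharp separation of phases proved at the end of Section~\ref{sec:FB-cond} gives $\partial\{u<0\}\subseteq\fb{u}$, hence by scaling $\partial\{u_k<0\}\subseteq\fb{u_k}$; so once \eqref{PNOAL9} (rescaled, as you correctly do) excludes $\fb{u_k}$ from $B'$, you automatically get $B'\subseteq\{u_k>0\}$, or $B'\subseteq\{u_k<0\}$, or $u_k\equiv0$ on $B'$, and your harmonic-estimate argument closes. For comparison, the paper's own proof takes a lighter route on the zero set: it works at Lebesgue density points $q$ of $\{u_0=0\}$, whose complement in $\{u_0=0\}$ is null by the Lebesgue density theorem, so no perimeter bound on $\fb{u_0}$, no appeal to Theorem~\ref{lem-ACF-blowup} or Theorem~\ref{GMT-00}, and no ``symmetric statement'' for $\partial\{u_0<0\}$ are needed; there one combines the Lipschitz bound with the density to get $\fint_{B_r(q)}u_k^2<cr^2$, applies Theorem~\ref{prop-nondeg} to conclude $u_k\le0$ near $q$, and then uses subharmonicity (Lemma~\ref{LEMMA2}) and the mean value inequality to force $u_0$ (and $u_k$) to vanish identically near $q$ — again only the positive phase is ever used.
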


\begin{proof} The proof is an appropriate modification
(and actually a simplification)
of some arguments also exploited in~\cite{AC}.
We let~${\mathscr{A}}$ be the set of the Lebesgue density points of~$\{ u_0=0\}$.
We show that~$\nabla u_k\to \nabla u_0$
in~${\mathscr{A}}\cup \{ u_0\ne0\}$, as~$k\to+\infty$,
with locally uniform convergence in~$\{ u_0\ne0\}$
(with this, since the complement of~${\mathscr{A}}\cup \{ u_0\ne0\}$
has zero Lebesgue measure, the desired result is established).

To this aim, we observe that if~$p\in \{ u_0\ne0\}$
we know from Proposition~\ref{tech-2} that there exists~$r_0>0$ such
that~$u_k(x)\ne0$ for any~$x\in B_{r_0}(p)$, as long as~$k$ is large enough.
Then, by Corollary~\ref{COR:5}, we have that~$u_k$ is harmonic in~$B_{r_0}(p)$
and so it has second derivatives estimates in~$B_{r_0/2}(p)$.
This implies that~$\nabla u_k\to \nabla u_0$ uniformly in~$B_{r_0/2}(p)$
and so, in particular, that~$\nabla u_k(p)\to \nabla u_0(p)$, as~$k\to+\infty$.

Now, let us take~$q\in{\mathscr{A}}$. Then,
$$ \lim_{r\searrow0}\frac{
{\mathcal{L}}^n(B_r(q)\cap \{u_0=0\})
}{{\mathcal{L}}^n(B_r(q))} =1$$
and therefore for any~$\eta>0$
there exists~$\bar r(\eta)>0$ such that if~$r\in(0,\bar r(\eta)]$ then
$$ \frac{{\mathcal{L}}^n(B_r(q)\cap \{u_0=0\})
}{{\mathcal{L}}^n(B_r(q))} \ge 1-\eta.$$
In particular~$ {\mathcal{L}}^n(B_r(q)\cap \{u_0\ne0\})\le \eta\,
{{\mathcal{L}}^n(B_r(q))}$ and so, in light of the Lipschitz regularity
obtained in Theorem~\ref{COR:7BIS}, we have that
$$ \fint_{ B_r(q) } u_0^2 =\frac{1}{{\mathcal{L}}^n(B_r(q))}
\int_{ B_r(q) \cap \{u_0\ne0\} } u_0^2 \le 
Cr^2 \, \frac{{\mathcal{L}}^n(B_r(q)\cap \{u_0\ne 0\})
}{{\mathcal{L}}^n(B_r(q))} \le C\eta \,r^2< \frac{c}{2}\,r^2,$$
up to renaming~$C>0$ and taking~$\eta$ suitably small,
where~$c>0$ is the one given in Theorem~\ref{prop-nondeg}.
Consequently
$$ \fint_{ B_r(q) } u_k^2 <c\,r^2$$
if $k$ is large enough, and so, by Theorem~\ref{prop-nondeg},
we have that~$u_k\le0$ in~$B_r(q)$,
and so~$u_0\le0$ in~$B_r(q)$.

We also know that~$u_k$ is subharmonic, thanks to Lemma~\ref{LEMMA2},
and thus also~$u_0$ is subharmonic. Accordingly, for small~$r>0$,
$$ 0=u_0(q)\le \fint_{B_r(q)} u_0 \le 0,$$
which implies that~$u_0$ vanishes identically in~$B_r(q)$.
Similarly, $u_k$ vanishes identically in~$B_r(q)$. These considerations
imply that~$\nabla u_k(q)=0=\nabla u_0(q)$.
\end{proof}

Next result shows that
the blow-up limit~$u_0$ is always a minimizer of the Alt-Caffarelli functional in~\eqref{EN:FUNCT:AC}
(for a suitable choice of~$Q$, which turns out to be constant).
That is, the blow-up limit has the additional, and somehow unexpected advantage, to
linearize the interfacial energy. The precise result,
which was stated in Theorem~\ref{lem-ACF-blowup}, is proved
by the following argument:

\begin{proof}[Proof of Theorem~\ref{lem-ACF-blowup}] 
The result in Theorem~\ref{lem-ACF-blowup}
can be seen as a nonlinear counterpart
of Lemma~5.4 in~\cite{AC}.
Up to a translation, we take~$x_0:=0$ in~\eqref{defblow}.
We take a competitor~$v_0$ for~$u_0$, i.e. we suppose that~$v_0-u_0\in W^{1,2}_0(B_r)$.
We also take~$\eta\in C^\infty_0(B_r,\,[0,1])$ and we define
$$ v_\rho:= v_0+(1-\eta)(u_\rho-u_0).$$
We observe that
\begin{equation}\label{92kahf:0}
v_\rho-u_\rho = (v_0-u_0) -\eta(u_\rho-u_0)\end{equation}
and so
\begin{equation}\label{92kahf}
v_\rho-u_\rho =0 {\mbox{ outside }}B_r.
\end{equation}
In addition,
$$\{ v_\rho >0 \} \subseteq \{v_0>0\}\cup \{\eta<1\}$$
and therefore
\begin{equation}\label{osdjfsdf3kn}
\chi_{\{ v_\rho >0 \}} \le \chi_{ \{v_0>0\} }+\chi_{ \{\eta<1\} }.
\end{equation}
We also define~$v(x):=\rho v_\rho(x/\rho)$.
We remark that
$$ (v-u)(x)=\rho\left( v_\rho\left( \frac{x}\rho\right)-u_\rho\left( \frac{x}\rho\right)\right)=0\quad{\mbox{
for any }}x\in\R^n\setminus B_{\rho r},$$
thanks to~\eqref{defblow} and~\eqref{92kahf}.
Since~$B_{\rho r}\subset\Omega$ when~$\rho$ is sufficiently small (possibly in dependence
of the fixed~$r>0$), we obtain that~$v-u=0$ outside~$\Omega$.

Consequently, we can use the minimality of~$u$ in~$\Omega$ and find that
\begin{equation}\label{1st0}
\begin{split}
& 0 \le J[v]-J[u]=\int_{ \Om } \big( |\nabla v|^2-|\nabla u|^2\big)+
\Phi_0\left( \lambda_2 \int_\Omega Q\chi_{\{v>0\}} \right)-
\Phi_0\left( \lambda_2 \int_\Omega Q\chi_{\{u>0\}} \right) \\
&\qquad\qquad= \int_{ B_{\rho r} } \big( |\nabla v|^2-|\nabla u|^2\big)+
\Phi_0\left( \lambda_2 \int_{ B_{\rho r} } Q\chi_{\{v>0\}} +\Xi_\rho\right)-
\Phi_0\left( \lambda_2 \int_{ B_{\rho r} } Q\chi_{\{u>0\}} +\Xi_\rho\right),
\end{split}\end{equation}
where
$$ \Xi_\rho := \lambda_2 \int_{ \Omega\setminus B_{\rho r} } Q\chi_{\{u>0\}}.$$
We point out that
\begin{equation}\label{idosj6w8qeydu2j}
\lim_{\rho\searrow0}\Xi_\rho=\Xi_0:=
\lambda_2 \int_{ \Omega } Q\chi_{\{u>0\}}\,.
\end{equation}
Now we scale the quantities in~\eqref{1st0}, using the substitution~$y:=x/\rho$.
In this way, we find that
\begin{eqnarray*} &&\int_{ B_{\rho r} } |\nabla v(x)|^2\,dx=
\int_{ B_{\rho r} } |\nabla v_\rho(x/\rho)|^2\,dx =
\rho^n\int_{ B_r } |\nabla v_\rho(y)|^2\,dy\\
{\mbox{and }}&&
\int_{ B_{\rho r} } Q(x)\chi_{\{v>0\}}(x)\,dx=
\int_{ B_{\rho r} } Q(x)\chi_{\{v_\rho>0\}}(x/\rho)\,dx=
\rho^n \int_{ B_r } Q(\rho y)\chi_{\{v_\rho>0\}}(y)\,dy,
\end{eqnarray*}
and similar expressions hold true with~$u$ replacing~$v$. Substituting these identities into~\eqref{1st0},
we conclude that
\begin{equation}\label{1st1}
\begin{split}&
0\le \int_{ B_r } \big( |\nabla v_\rho|^2-|\nabla u_\rho|^2\big)\\ &\qquad\qquad+\frac{1}{\rho^n}\left[
\Phi_0\left( \rho^n\lambda_2 \int_{ B_r } Q(\rho x)\chi_{\{v_\rho>0\}}(x)\,dx +\Xi_\rho\right)-
\Phi_0\left( \rho^n\lambda_2 \int_{ B_r } Q(\rho x)\chi_{\{u_\rho>0\}}(x)\,dx +\Xi_\rho\right)\right].\end{split}
\end{equation}
Now, from~\eqref{92kahf:0}, we have that
$$ v_\rho+u_\rho = v_0-u_0 -\eta(u_\rho-u_0)+2u_\rho
= (v_0+u_0)+(2-\eta)(u_\rho-u_0).$$
This and~\eqref{92kahf:0} give that
\begin{eqnarray*} && |\nabla v_\rho|^2-|\nabla u_\rho|^2
= \nabla(v_\rho+u_\rho)\cdot(v_\rho-u_\rho)\\&&\qquad=
\nabla \big( 
(v_0+u_0)+(2-\eta)(u_\rho-u_0)
\big)\cdot\nabla\big( (v_0-u_0) -\eta(u_\rho-u_0)\big)\\
&&\qquad=\nabla(v_0+u_0)\cdot\nabla(v_0-u_0) 
-\eta\nabla(v_0+u_0)\cdot\nabla(u_\rho-u_0)
-(u_\rho-u_0)\nabla(v_0+u_0)\cdot\nabla\eta
\\ &&\qquad\qquad\qquad-(u_\rho-u_0)\nabla\eta\cdot\nabla(v_0-u_0)
+(2-\eta)\nabla(u_\rho-u_0)
\cdot\nabla(v_0-u_0)
\\ &&\qquad\qquad\qquad
+|u_\rho-u_0|^2\,|\nabla\eta|^2
+\eta(u_\rho-u_0)\nabla\eta\cdot\nabla(u_\rho-u_0)
\\ &&\qquad\qquad\qquad
-(2-\eta)(u_\rho-u_0)\nabla(u_\rho-u_0)\cdot\nabla\eta
-(2-\eta)\eta\,|\nabla(u_\rho-u_0)|^2.
\end{eqnarray*}
We remark that the latter term has a sign. So, recalling Proposition~\ref{tech-2},
we obtain that
\begin{equation}\label{0qwu12}
\lim_{\rho\searrow0}\int_{ B_r } \big( |\nabla v_\rho|^2-|\nabla u_\rho|^2\big)
\le 
\int_{ B_r } \nabla(v_0+u_0)\cdot\nabla(v_0-u_0)=
\int_{ B_r } \big( |\nabla v_0|^2-|\nabla u_0|^2\big).
\end{equation}
Now we set
\begin{eqnarray*}
&& \alpha_\rho:=
\lambda_2 \int_{ B_r } Q(\rho x)\chi_{\{v_0>0\}}(x)\,dx+
\lambda_2 \int_{ B_r } Q(\rho x)\chi_{\{\eta<1\}}(x)\,dx\\
{\mbox{and }}&&
\beta_\rho:=
\lambda_2 \int_{ B_r } Q(\rho x)\chi_{\{u_\rho>0\}}(x)\,dx
.\end{eqnarray*}
We observe that
\begin{eqnarray*}
&& \lim_{\rho\searrow0}\alpha_\rho=\alpha_0:=
\lambda_2 \int_{ B_r } Q(0)\chi_{\{v_0>0\}}(x)\,dx+
\lambda_2 \int_{ B_r } Q(0)\chi_{\{\eta<1\}}(x)\,dx\\
{\mbox{and }}&&\lim_{\rho\searrow0}
\beta_\rho=\beta_0:=
\lambda_2 \int_{ B_r } Q(0)\chi_{\{u_0>0\}}(x)\,dx,
\end{eqnarray*}
thanks to Proposition~\ref{tech-2}.

Then, recalling~\eqref{osdjfsdf3kn} and the monotonicity of~$\Phi_0$, 
and exploiting also~\eqref{idosj6w8qeydu2j}, we have that
\begin{eqnarray*}
&&\lim_{\rho\searrow0}
\frac{1}{\rho^n}\left[
\Phi_0\left( \rho^n\lambda_2 \int_{ B_r } Q(\rho x)\chi_{\{v_\rho>0\}}(x)\,dx +\Xi_\rho\right)-
\Phi_0\left( \rho^n\lambda_2 \int_{ B_r } Q(\rho x)\chi_{\{u_\rho>0\}}(x)\,dx +\Xi_\rho\right)\right]\\
&\le& \lim_{\rho\searrow0}
\frac{1}{\rho^n}\Big[\Phi_0(\rho^n\alpha_\rho+\Xi_\rho)-
\Phi_0(\rho^n\beta_\rho+\Xi_\rho) \Big]\\
&=&\lim_{\rho\searrow0}
(\alpha_\rho-\beta_\rho )\int_0^1 \Phi_0' \big(\rho^n\beta_\rho+t
\rho^n(\alpha_\rho-\beta_\rho)
+\Xi_\rho\big)\,dt
\\ &=&
(\alpha_0-\beta_0 )\, \Phi_0' (\Xi_0).
\end{eqnarray*}
So, we insert this inequality and~\eqref{0qwu12} into~\eqref{1st1}
and we obtain
\begin{eqnarray*}
0 &\le& \int_{ B_r } \big( |\nabla v_0|^2-|\nabla u_0|^2\big)
+(\alpha_0-\beta_0 )\, \Phi_0' (\Xi_0) \\
&=& \int_{ B_r } \big( |\nabla v_0|^2-|\nabla u_0|^2\big)
+
\lambda_2 Q(0)\,\Phi_0' (\Xi_0)\,\int_{ B_r } \chi_{\{v_0>0\}}(x)\,dx\\ &&\quad+
\lambda_2 Q(0)\,\Phi_0' (\Xi_0)\,\int_{ B_r } \chi_{\{\eta<1\}}(x)\,dx
-\lambda_2 Q(0)\,\Phi_0' (\Xi_0)\,\int_{ B_r } \chi_{\{u_0>0\}}(x)\,dx
.
\end{eqnarray*}
This estimate is valid for any choice of the function~$\eta$; therefore,
letting~$\{\eta=1\}$ invade the whole of~$B_r$, we deduce that
the term~$\int_{ B_r } \chi_{\{\eta<1\}}(x)\,dx$ can be made as small
as we wish. As a consequence,
\begin{eqnarray*}
0 &\le& \int_{ B_r } \big( |\nabla v_0|^2-|\nabla u_0|^2\big)
+
\lambda_2 Q(0)\,\Phi_0' (\Xi_0)\,\int_{ B_r } \chi_{\{v_0>0\}}(x)\,dx
-\lambda_2 Q(0)\,\Phi_0' (\Xi_0)\,\int_{ B_r } \chi_{\{u_0>0\}}(x)\,dx
,
\end{eqnarray*}
which establishes the desired minimality property for~$u_0$.
\end{proof}

\section{Partial regularity of the free boundary}\label{sec:partialreg}

Using the subharmonicity property of the minimizers and their
Lipschitz regularity (recall Lemma~\ref{LEMMA2}
and Theorem~\ref{COR:7BIS}), we are now in the position to exploit
standard techniques from geometric measure theory and
conclude a partial regularity of the free boundary, as claimed in Theorem~\ref{GMT-00}:

\begin{proof}[Proof of Theorem~\ref{GMT-00}] The proof of~(i)
in Theorem~\ref{GMT-00} follows by a standard integration by parts
(combined with the subharmonicity property of Lemma~\ref{LEMMA2},
see e.g. the proof of formula~(8.1) in~\cite{DK} for details).

To prove~(ii) in Theorem~\ref{GMT-00},
we argue by contradiction and we suppose that
there exist~$r_j\searrow0$ and~$x_j\in\Gamma$ such that~$B_{2r_j}(x_j)\subset\Omega$
and
\begin{equation}\label{1AOJ} \int_{B_{r_j}(x_j)}\Delta u^+\le\frac{r_j^{n-1}}{j}.\end{equation}
Up to subsequences, we may suppose that~$x_j\to \bar x\in\overline{D}\subset\Omega$.
We define
$$v_j(x):=\frac{u(x_j+r_j x)}{r_j}.$$
Notice that, in the light of Corollary~\ref{COR:7}, for any~$x$, $y\in\overline{ B_1}$ we have
$$ |v_j(x)-v_j(y)|\le
\frac{\big|u(x_j+r_j x)-u(x_j+r_jy)\big|}{r_j}\le\frac{C\,|(x_j+r_j x)-(x_j+r_jy)|}{r_j}\le
C\,|x-y|.$$
So $v_j$ (and hence~$v_j^+$)
is Lipschitz in~$\overline{B_1}$ uniformly in~$j$ and thus we may suppose
that~$v_j^+$ converges to some~$\bar v^+$ uniformly in~$\overline{B_1}$.

Now, let~$\phi\in C^\infty_0\big(B_1,\,[0,+\infty)\big)$. Notice that, by
Lemma~\ref{LEMMA2}, we know that~$u$ is subharmonic,
hence so is~$u^+$ and~$\Delta u^+\ge0$. Hence, recalling~\eqref{1AOJ}, we have
\begin{eqnarray*}&&
\int_{B_1} \bar v^+\Delta\phi = \lim_{j\to+\infty}\int_{B_1} v^+_j\Delta\phi
= \lim_{j\to+\infty} r_j^{-1}\int_{B_1} u^+(x_j+r_j x)\,\Delta\phi(x)\,dx\\
&&\qquad=
\lim_{j\to+\infty} r_j \int_{B_1} \Delta u^+(x_j+r_j x)\,\phi(x)\,dx
=
\lim_{j\to+\infty} r_j^{1-n} 
\int_{B_{r_j}(x_j)} \Delta u^+(\xi)\,\phi\left( \frac{\xi-x_j}{r_j}\right)\,d\xi\\
&&\qquad \le \sup_{\R^n} \phi\,
\lim_{j\to+\infty} r_j^{1-n} 
\int_{B_{r_j}(x_j)} \Delta u^+(\xi)\,d\xi
\le\sup_{\R^n} \phi\,
\lim_{j\to+\infty}\frac{1}{j}=0.
\end{eqnarray*}
Accordingly, $\bar v^+$ is superharmonic in~$B_1$. By construction, we also have that~$\bar v^+\ge0$ and
$$ \bar v^+(0)=\lim_{j\to+\infty}\frac{u(x_j)}{r_j}
=\lim_{j\to+\infty}\frac{0}{r_j}=0.$$
As a consequence
\begin{equation}\label{aI12adA}
{\mbox{$\bar v^+$ vanishes identically in~$B_1$.}}\end{equation}
On the other hand, by \eqref{PNOAL9},
$$ cr_j^2 \le \fint_{B_{r_j}(x_j)\cap \Om^+_0}u^2\le \sup_{B_{r_j}(x_j)} u^2\le r_j^2
\sup_{B_1} v_j^2,$$
for some~$c>0$. So, simplifying~$r_j$ on both sides of the inequality and taking
the limit in~$j$, we find that
$$ c\le \sup_{B_1} \bar v^2.$$
This is in contradiction with~\eqref{aI12adA}
and so the proof of~(ii) is complete.

Then, \eqref{90AKJA:A} follows from point~(ii) and suitable geometric measure
theory arguments (see e.g. the proof of Corollary~8.2 in~\cite{DK}
for full details). For the proof of~\eqref{90AKJA:B}, see e.g.
the proof of Theorem~B in~\cite{DK}.
\end{proof}

\section{Regularity of the free boundary}\label{sec:FBregularity2D}

In this section we show that at flat points the free boundary is a 
regular smooth surface. In particular, 
in two spatial dimensions the free boundary is a 
continuously differentiable curve. 

Our approach differs from the one in \cite{AC, ACF}
as we avoid using the flatness classes. Instead, we use the free boundary 
regularity theory for the viscosity solutions from~\cite{DK}.

\subsection{Viscosity solutions}
Recall the definitions of~$\Omega^+(u)$ and ~$\Omega^-(u)$
given in the section with the notation. 

If the free boundary is $C^1$ smooth, then
$$ G(u^+_\nu ,u^-_\nu ) := (u^+_\nu )^2 - (u^-_\nu )^2 -\Lambda$$
is the flux balance across the free boundary, 
where~$u^+_\nu$ and~$u^-_\nu$ are the normal derivatives in the
inward direction to~$\partial\Omega^+(u)$ and~$\partial\Omega^-(u)$, 
respectively, and $\Lambda$ is defined in \eqref{eq-big-lamb}. 

With this notation, we give the definition of viscosity solution:

\begin{defn}\label{def:visc}
Let~$\Omega$ be a bounded domain of~$\R^n$ and let~$u$ 
be a continuous function in~$\Omega$. We say that~$u$ is a viscosity solution
in~$\Omega$ if
\begin{itemize}
\item[i)] $\Delta u=0$ in~$\Omega^+(u)$ and~$\Omega^-(u)$,
\item[ii)] along the free boundary~$\Gamma$, $u$ satisfies the free boundary condition, in the sense that:
\begin{itemize}
\item[a)] if at~$x_0\in\Gamma$ there exists a ball~$B\subset\Omega^+(u)$
such that~$x_0\in \partial B$ and
\begin{equation*}
u^+(x)\ge\alpha\langle x-x_0,\nu\rangle^+ + o(|x-x_0|), \ {\mbox{ for }} x\in B,
\end{equation*}
\begin{equation*}
u^-(x)\le\beta\langle x-x_0,\nu\rangle^- + o(|x-x_0|), \ {\mbox{ for }} x\in B^c,
\end{equation*}
for some $\alpha>0$ and~$\beta\ge0$, with equality along every non-tangential domain,
then the free boundary condition is satisfied
$$ G(\alpha,\beta)=0, $$
\item[b)] if at~$x_0\in\Gamma$ there exists a ball~$B\subset\Omega^-(u)$
such that~$x_0\in \partial B$ and
$$ u^-(x)\ge\beta\langle x-x_0,\nu\rangle^- + o(|x-x_0|), \ {\mbox{ for }} x\in B, $$
$$ u^+(x)\le\alpha\langle x-x_0,\nu\rangle^+ + o(|x-x_0|), \ {\mbox{ for }} x\in\partial B, $$
for some $\alpha\ge0$ and~$\beta>0$, with equality along every non-tangential domain,
then
$$ G(\alpha,\beta)=0. $$
\end{itemize}
\end{itemize}
\end{defn}

\begin{lem}\label{lem-visc-sol-full}
Let $u$ be a minimizer in~$\Omega$ for the functional~$J$ in~\eqref{EN:FUNCT}.
Then, $u$ is also a viscosity solution in the sense of Definition \ref{def:visc}.
\end{lem}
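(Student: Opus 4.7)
The plan is to verify the two conditions of Definition~\ref{def:visc} separately. For condition~(i), the harmonicity of $u$ inside $\Omega^+(u)$ is already the content of Corollary~\ref{COR:5}; the analogous statement inside $\Omega^-(u)$ follows by the same argument, since for any $x_0\in\Omega^-(u)$ the continuity of $u$ (also from Corollary~\ref{COR:5}) gives a ball $B_r(x_0)\subset\Omega^-(u)$ on which any perturbation $u+\varepsilon\varphi$ with $\varphi\in C_0^\infty(B_r(x_0))$ and $|\varepsilon|$ small keeps the set $\{u+\varepsilon\varphi>0\}$ unchanged, so the $\Phi_0$ term in~\eqref{EN:FUNCT} is insensitive to the perturbation and minimality reduces to Euler--Lagrange for the Dirichlet integral alone.

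For condition~(ii), suppose case~(a) occurs at a free boundary point $x_0$, with touching ball $B\subset\Omega^+(u)$, inward unit normal $\nu$, and constants $\alpha>0$, $\beta\ge 0$ as in the definition. The idea is to run a blow-up argument at $x_0$ and reduce the free boundary condition to the one for the linear functional obtained in Theorem~\ref{lem-ACF-blowup}. Setting $u_k(x):=u(x_0+\rho_k x)/\rho_k$ for some $\rho_k\searrow 0$, Proposition~\ref{tech-2} and Lemma~\ref{L15} give a blow-up limit $u_0$ with all the convergence properties listed there. The one-sided asymptotic expansions postulated in Definition~\ref{def:visc}(a), together with the touching ball condition and the local uniform convergence of $u_k$ to $u_0$, force
\[
u_0(x)=\alpha\,\langle x,\nu\rangle^{+}-\beta\,\langle x,\nu\rangle^{-},
\]
so that $u_0$ is the two-plane profile with slopes $\alpha$ and $\beta$ on either side of the hyperplane $\{\langle x,\nu\rangle=0\}$.

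By Theorem~\ref{lem-ACF-blowup} (and its natural two-phase counterpart, obtained by the same domain-variation argument that produces the coefficient $\lambda_0$ in~\eqref{di u}), $u_0$ is a local minimizer of the associated linear Alt--Caffarelli--Friedman functional whose Bernoulli constant is precisely $\Lambda(x_0)$ as defined in~\eqref{eq-big-lamb}. Since the above two-plane profile is a critical point of this linear energy, an application of Lemma~\ref{FBCOND} specialized to $u_0$, with $Q\equiv Q(x_0)$ and with $\Phi$ replaced by the linearization provided by the blow-up, yields $\alpha^2-\beta^2=\Lambda(x_0)$, i.e.\ $G(\alpha,\beta)=0$. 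Case~(b) of Definition~\ref{def:visc} is handled by the symmetric argument, exchanging the roles of $u^+$ and $u^-$ and of the two inward normal derivatives in~\eqref{no00n}.

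The main obstacle is guaranteeing the two-phase analogue of Theorem~\ref{lem-ACF-blowup}: one must check that the blow-up procedure genuinely ``linearizes'' the nonlinear volume functional $\Phi_0\circ\mathcal M_2$ into a classical Alt--Caffarelli--Friedman energy whose Bernoulli constant agrees with the self-driven $\Lambda(x_0)$ predicted by the formal computation in Section~\ref{sec:FB-cond}. Once this linearization is in hand, the rigidity of the blow-up as a two-plane profile and the reading off of the free boundary condition on that profile are entirely standard; the continuity of $Q$ (used in Theorem~\ref{lem-ACF-blowup}) enters only to guarantee that the coefficient $\lambda_0$ appearing in the limit is indeed evaluated at $x_0$.
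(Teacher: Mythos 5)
The paper itself proves this lemma by citation (Lemma~11.17 in~\cite{CS}, Theorem~4.2 in~\cite{DK}), i.e.\ by the direct comparison argument for viscosity solutions; your route through the blow-up and Theorem~\ref{lem-ACF-blowup} is therefore genuinely different and in principle viable, and your verification of condition~(i) of Definition~\ref{def:visc} in $\Omega^-(u)$ (freezing the volume term under small perturbations) is fine. However, two steps in part~(ii) are real gaps. First, the claim that the asymptotic developments ``force'' $u_0(x)=\alpha\langle x,\nu\rangle^+-\beta\langle x,\nu\rangle^-$ is justified only when $\beta>0$ (resp.\ $\alpha>0$ in case~(b)). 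Definition~\ref{def:visc}(a) controls $u^+$ only in $B$ and $u^-$ only in $B^c$; if $\beta=0$, nothing in the definition prevents $u$ from being positive in $B^c$ arbitrarily close to $x_0$ along tangential directions, so the blow-up limit could a priori carry an extra positive phase in the half-space $\{\langle x,\nu\rangle<0\}$. You must exclude this using the minimality of $u_0$ for $J_0$ (harmonic replacement/classification, or nondegeneracy plus the ACF monotonicity formula), and symmetrically in case~(b). Second, the concluding step cannot be delegated to Lemma~\ref{FBCOND}: that lemma concerns the original minimizer $u$, requires the hypothesis~\eqref{ME0} that $\{u=0\}$ has zero measure together with $Q\in W^{1,1}$, and only yields a distributional identity in the limit $\varepsilon\searrow0$; it does not apply ``specialized to $u_0$ with $\Phi$ replaced by the linearization'', and in the one-phase case $\beta=0$ its hypothesis~\eqref{ME0} is violated by $u_0$ itself. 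What you actually need is the elementary classical fact that a two-plane function $\alpha\langle x,\nu\rangle^+-\beta\langle x,\nu\rangle^-$ minimizing the linear functional $J_0$ of Theorem~\ref{lem-ACF-blowup} must satisfy $\alpha^2-\beta^2=\lambda_0$ (by a direct interface perturbation or the Weiss energy), combined with the chain-rule identity $\lambda_2\,Q(x_0)\,\Phi_0'=\Lambda(x_0)$ comparing~\eqref{di u} with~\eqref{eq-big-lamb}.

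Finally, the ``main obstacle'' you flag is not an obstacle: Theorem~\ref{lem-ACF-blowup} as stated already admits sign-changing competitors $v_0$, since $J_0$ penalizes only the set $\{w>0\}$, exactly as $\Phi_0\circ\mathcal{M}_2$ does after~\eqref{UNASO}; no separate two-phase counterpart needs to be proved. With the two gaps above repaired, your argument would give a self-contained proof where the paper relies on external references.
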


\begin{proof}
See Lemma~11.17 in~\cite{CS} or Theorem~4.2 in~\cite{DK} for the proof.
\end{proof}

Notice that, if $x_0\in \fbr{u}$, then $\Gamma$ is flat near $x_0$. 
Therefore, since a minimizer~$u$ is also a viscosity solution, 
according to Lemma~\ref{lem-visc-sol-full}, we can use 
the Harnack inequality approach to $u$, and obtain that~$\Gamma$ 
is $C^{1, \alpha}$ in some neighborhood of~$x_0$. 

We next show that in two dimensions the free boundary is 
a continuously differentiable curve.

\subsection{The case in which $u^-$ is degenerate}
In this section we show that near the points $x_0\in \Gamma$
where $u^-$ is degenerate, the minimizer $u$ behaves 
essentially as a solution to the 
one-phase problem. Recall that we say that~$u^-$ is degenerate at~$x_0$ if
\begin{equation}\label{DEG}
\liminf_{r\to0}  \frac{1}{r}\fint_{B_r(x_0)}u^- =0. \end{equation}
We stress that~$u^+$ (in contrast to ~$u^-$) is always nondegenerate,
according to the following observation:

\begin{lem}\label{upng}
Let~$u$ be a minimizer in~$\Omega$ 
for the functional~$J$ in~\eqref{EN:FUNCT}, 
with~$0\in \fb{u}$. 
Assume that~\eqref{IOTA:0} and~\eqref{IOTA} are satisfied.
Then
$$ \liminf_{r\to0}  \frac{1}{r}\fint_{B_r}u^+>0.$$
\end{lem}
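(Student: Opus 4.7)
The plan is to combine the nondegeneracy result of Theorem~\ref{prop-nondeg} with the linear growth estimate of Corollary~\ref{COR:6} and interior Harnack's inequality. The idea is to locate a sub-ball of radius comparable to~$r$ inside $B_r\cap \{u>0\}$ on which $u$ is bounded below by a constant multiple of~$r$; the integral lower bound will then follow immediately by volume comparison.

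First I would pick a connected component $\Omega^+_0$ of $\Omega^+(u)$ with $0\in\partial\Omega^+_0$ (which exists because $0\in\Gamma=\partial\{u>0\}$). Under \eqref{IOTA:0} and \eqref{IOTA}, for all sufficiently small $r>0$ the hypotheses of Theorem~\ref{prop-nondeg} are met at $x_0=0$ with radius $r/2$: the mass condition \eqref{JOHN} holds trivially once $r$ is small compared to $\varpi$, while \eqref{IOTA} supplies a positive lower bound for the relevant infimum of $\Phi_0'$. The contrapositive of the nondegeneracy statement then yields $\fint_{B_{r/2}\cap\Omega^+_0} u^2 \geq c\,r^2$ for some fixed $c>0$ independent of~$r$, hence the existence of a point $y_r\in B_{r/2}\cap\Omega^+_0$ with $u(y_r)\geq c_0 r$.

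Next, Corollary~\ref{COR:6} applied at $y_r$ gives $\operatorname{dist}(y_r,\Gamma)\geq c_1 r$ for a new constant $c_1>0$. Since $B_{c_1 r}(y_r)$ is connected, contained in $\Omega^+(u)$, and meets $\Omega^+_0$ at $y_r$, it is entirely contained in $\Omega^+_0$; by Corollary~\ref{COR:5}, $u$ is harmonic there. Classical Harnack then upgrades the pointwise bound $u(y_r)\geq c_0 r$ to $u\geq c_2 r$ on the concentric ball $B_{c_1 r/2}(y_r)$, which (by choosing $c_1\leq 1$) lies inside $B_r$. A volume comparison then yields
\[
\frac{1}{r}\fint_{B_r} u^+ \;\geq\; \frac{1}{r\,\mathcal{L}^n(B_r)}\int_{B_{c_1 r/2}(y_r)} u \;\geq\; \left(\tfrac{c_1}{2}\right)^n c_2,
\]
which is bounded away from zero uniformly in small~$r$, giving the desired conclusion.

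The main obstacle is merely the bookkeeping of scales: one must invoke Theorem~\ref{prop-nondeg} at radius $r/2$ rather than $r$ so that the Harnack sub-ball produced by nondegeneracy still fits inside the original ball $B_r$. Beyond this, the argument is a direct concatenation of results already established in the preceding sections, together with the observation that $u^+$ (unlike $u^-$, whose degeneracy is defined in \eqref{DEG}) inherits the full strength of the nondegeneracy theorem at every free boundary point.
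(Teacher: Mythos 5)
Your argument is correct, but it follows a different route than the paper. The paper proves this lemma in two steps: it first invokes the clean ball condition of Theorem~\ref{HJsdfg} to produce $B_{c_1 r}(y_0)\subseteq B_r\cap\Om^+(u)$, and then applies the variational growth Lemma~\ref{BY BELOW} (an energy comparison with a truncated competitor, which is where \eqref{IOTA} enters) to get $u\ge c_2 r$ on $B_{c_1 r/2}(y_0)$ before integrating. You instead unpack the clean-ball argument (nondegeneracy of Theorem~\ref{prop-nondeg} to find $y_r\in B_{r/2}$ with $u(y_r)\ge c_0 r$, then the linear growth bound to get ${\rm dist}(y_r,\Gamma)\ge c_1 r$ -- this is exactly the proof of Theorem~\ref{HJsdfg}) and then replace Lemma~\ref{BY BELOW} by the interior Harnack inequality for the positive harmonic function $u$ in $B_{c_1 r}(y_r)$. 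This works precisely because you keep track of the point where $u$ is large and center the clean ball at it, so the pointwise bound $u(y_r)\ge c_0 r$ propagates to the half-ball by Harnack; the paper's statement of Theorem~\ref{HJsdfg} does not record the value of $u$ at the center of the clean ball, which is why it resorts to the competitor construction of Lemma~\ref{BY BELOW}. Your version is thus a legitimate and arguably more economical concatenation of the same core ingredients (nondegeneracy, linear growth, harmonicity in $\Om^+(u)$ from Corollary~\ref{COR:5}). Two small points of bookkeeping: Corollary~\ref{COR:6} is stated only for $u\ge0$, so in the two-phase setting you should instead derive $u(y_r)\le C\,{\rm dist}(y_r,\Gamma)$ from the Lipschitz estimate of Theorem~\ref{COR:7BIS} together with $u=0$ on $\Gamma$ (the same substitution is implicitly needed in the paper's own proof of Theorem~\ref{HJsdfg}); and when you say that \eqref{IOTA} ``supplies'' the hypothesis \eqref{IPTH}, note that the two infima are over different intervals -- the point is that, thanks to \eqref{IOTA:0}, the weighted volumes actually appearing in the proof of Theorem~\ref{prop-nondeg} lie in $[\lambda_2 Q_1\varpi/2,\,\lambda_2 Q_2\bar\omega]$, which is contained in the interval of \eqref{IOTA}, so the nondegeneracy constant can be taken to depend on $\iota$ rather than on $\Theta$.
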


\begin{proof} By the clean ball condition in Theorem~\ref{HJsdfg},
we have that~$B_{c_1 r}(y_0)\subseteq B_r\cap \Om^+(u)$,
for a suitable point~$y_0\in B_r$ and a constant~$c_1>0$.

Thus, from Lemma~\ref{BY BELOW}, we have that~$u(y)\ge c_2 r$
for any~$y\in B_{c_1 r/2}(y_0)$, for some~$c_2>0$. As a consequence,
$$ \int_{B_r} u^+ \ge \int_{B_{c_1 r/2}(y_0)} u^+ \ge c_2 r\;{\mathcal{L}}^n(
B_{c_1 r/2}(y_0)),$$
which gives the desired result.
\end{proof}

Next, we show that if $u^-$ is degenerate then so is the gradient of $u^-$ in the following sense: 
\begin{lem}\label{0-grad} 
Let~$u$ be a minimizer in~$\Omega$ for the functional~$J$ in~\eqref{EN:FUNCT}.
Let~$x_0\in \fb{u}$, and suppose that~$u^-$ is degenerate at~$x_0$.
Then 
$$ \lim_{r\to 0}\sup_{x\in B_r(x_0)}|\na u^-(x)|=0.$$
\end{lem}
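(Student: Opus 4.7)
The plan is to argue by contradiction via a blow-up at $x_0$ along the sequence realizing the degeneracy. First I upgrade \eqref{DEG} to a sublinear $L^\infty$ decay of $u^-$; then I extract a one-phase Alt--Caffarelli blow-up in which the negative phase disappears; finally, by case analysis on the limit point of the ``bad'' sequence of points, I show that the gradient of $u_j^-$ must vanish uniformly, with a second blow-up handling the delicate case of a free-boundary limit point.

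The preliminary observation is that $u^-$ is subharmonic in $\Omega$: $u$ is harmonic on each of $\{u<0\}$ and $\{u>0\}$ (by the Euler--Lagrange argument behind Corollary~\ref{COR:5}), and a distributional integration by parts then yields $\Delta u^-\ge 0$. The standard weak $L^\infty$ bound $\sup_{B_{r/2}(x_0)}u^-\le C\fint_{B_r(x_0)}u^-$, applied along a subsequence $\rho_j\to 0$ with $\rho_j^{-1}\fint_{B_{\rho_j}(x_0)}u^-\to 0$, yields $\rho_j^{-1}\sup_{B_{\rho_j/2}(x_0)}u^-\to 0$. Setting $u_j(x):=u(x_0+\rho_j x)/\rho_j$, Proposition~\ref{tech-2} (up to a further subsequence) gives $u_j\to u_0$ locally uniformly, and the display rescales to $\sup_{B_{1/2}}u_j^-\to 0$; hence $u_0\ge 0$ on $B_{1/2}$. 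By Theorem~\ref{lem-ACF-blowup}, $u_0$ is a one-phase Alt--Caffarelli minimizer, whose free boundary satisfies the nondegeneracy and linear growth of Theorems~\ref{prop-nondeg} and~\ref{COR:7BIS}.

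Suppose now the conclusion fails. Since $r\mapsto\sup_{B_r(x_0)}|\nabla u^-|$ is nondecreasing with strictly positive limit, there exist $a>0$ and $y_j\in B_{\rho_j/4}(x_0)$ with $|\nabla u^-(y_j)|\ge a$; setting $z_j:=(y_j-x_0)/\rho_j$ and passing to a subsequence $z_j\to z_\infty\in\overline{B_{1/4}}$ gives $|\nabla u_j^-(z_j)|\ge a$. If $u_0(z_\infty)>0$, uniform convergence forces $u_j>0$ near $z_\infty$, so $u_j^-\equiv 0$ there and $\nabla u_j^-(z_j)=0$, a contradiction; and if $z_\infty$ is a Lebesgue density point of $\{u_0=0\}$, the density argument from the proof of Lemma~\ref{L15} forces $u_j\equiv 0$ on a neighborhood of $z_\infty$ for large $j$, producing the same contradiction.

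The remaining case is $z_\infty\in\partial\{u_0>0\}$. Then $u_j(z_j)<0$, $u_j$ is harmonic on the largest ball $B_{d_j}(z_j)\subset\{u_j<0\}$, and the interior gradient estimate gives $a\le|\nabla u_j(z_j)|\le C\delta_j/d_j$ with $\delta_j:=\sup_{B_{1/2}}u_j^-\to 0$, so $d_j\to 0$. Using the sharp separation of phases established in Section~\ref{sec:FB-cond}, pick $\tilde z_j\in\Gamma_j:=\partial\{u_j>0\}$ with $|\tilde z_j-z_j|=d_j$ and perform the second rescaling $w_j(x):=u(p_j+\rho_j d_j x)/(\rho_j d_j)$ centered at $p_j:=x_0+\rho_j\tilde z_j\in\Gamma$; a variant of Theorem~\ref{lem-ACF-blowup} for moving centers $p_j\to x_0$ produces an Alt--Caffarelli minimizer limit $w_0$, and Harnack applied to the positive harmonic function $u_j^-$ on $B_{d_j}(z_j)$, combined with the Lipschitz bound on $u_j$, gives $w_0(y_\infty)\in[-L,-a/C]$ at $y_\infty:=\lim(z_j-\tilde z_j)/d_j$, so that $\nabla w_j(y_j)\to\nabla w_0(y_\infty)$ via Lemma~\ref{L15}. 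The nondegeneracy and linear growth of $u_0^+$ at the regular free-boundary point $z_\infty$ force $\{u_j<0\}$ locally into a slab of width $O(d_j)$ whose $d_j$-rescaling converges to a half-space, and the Dirichlet estimate for harmonic functions on such collapsing thin strips then yields $\nabla w_j(y_j)\to 0$, contradicting $|\nabla w_j(y_j)|=|\nabla u_j(z_j)|\ge a$. The hard part is precisely this last step: controlling the geometry of the vanishing negative phase around a one-phase free-boundary point of $u_0$ tightly enough to rule out persistent gradient mass in the second blow-up limit.
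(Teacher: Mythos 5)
Your preliminary steps are fine and essentially parallel the paper's route: $u^-$ is subharmonic, the degeneracy \eqref{DEG} upgrades to $\rho_j^{-1}\sup_{B_{\rho_j/2}(x_0)}u^-\to0$, the blow-up $u_j\to u_0$ satisfies $u_0\ge0$ in $B_{1/2}$, and the cases $u_0(z_\infty)>0$ and ``$z_\infty$ a density point of $\{u_0=0\}$'' are handled exactly as in Lemma~\ref{L15}. (The paper disposes of the whole lemma at this stage: it shows $\fint_{B_1}u_0^-=0$, hence $u_0^-\equiv0$ in $B_1$, and then invokes Lemma~\ref{L15} to convert $\sup_{B_1}|\na u_0^-|=0$ into $\sup_{B_1}|\na u_j^-|\to0$; the nondegeneracy Theorem~\ref{prop-nondeg} and subharmonicity are used there at the level of the functions $u_j$ themselves, not of a second blow-up limit.) The genuine gap is in your remaining case $z_\infty\in\partial\{u_0>0\}$, which is indeed the delicate one.

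Two steps there do not hold up. First, the confinement of $\{u_j<0\}$ to a slab of width $O(d_j)$ is unjustified: nondegeneracy and linear growth (Theorem~\ref{prop-nondeg}, Lemma~\ref{BY BELOW}) concern only the positive phase, there is no nondegeneracy for $u^-$ in this problem, and $d_j$ was defined as the distance of the single point $z_j$ to $\partial\{u_j<0\}$, which says nothing about the thickness of the negative set elsewhere. Second, and decisively, the ``Dirichlet estimate on collapsing thin strips'' cannot give $\na w_j(y_j)\to0$: $u_j^-$ vanishes only on one side of the putative strip, and its values are of size comparable to $a\,d_j$, i.e.\ of the same order as the strip width, so after the rescaling by $d_j$ nothing small remains --- your own Harnack computation produces $w_j(y_j)\le -a/C$, an order-one negative value at unit distance from the free boundary of $w_j$. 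In fact the configuration you must exclude, $u_j^-\approx a\,\dist(\cdot,\partial\{u_j>0\})$ on a thin negative layer, is perfectly consistent with everything you have derived: its second blow-up is a two-plane profile $\alpha(x\cdot e)^+-\beta(x\cdot e)^-$ with $\alpha^2-\beta^2=\lambda_0$ and $\beta\approx a$, which is not ruled out by minimality of $w_0$ for the $\lambda_1=0$ functional (negative phases carry no penalty), nor by the degeneracy \eqref{DEG}, since the smallness $\sup_{B_{1/2}}u_j^-\le\delta_j$ only rescales to $\delta_j/d_j\gtrsim a$ at the new scale. So no contradiction is reached in this case; to save the second-blow-up strategy you would need a genuinely new ingredient (e.g.\ propagating the degeneracy to the moving centers $p_j$ at scale $\rho_jd_j$, or excluding two-plane limits by using the minimality of $u$ at scale $\rho_j$), and, as a minor further point, the ``moving center'' variant of Theorem~\ref{lem-ACF-blowup} and of Lemma~\ref{L15} that you invoke is asserted rather than proved.
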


\begin{proof}
We argue by contradiction and we suppose that 
the conclusion of the lemma fails. 
Then, there exists a sequence $r_j\to 0$, as~$j\to+\infty$, 
such that 
\begin{eqnarray}\label{ghgeasd96785}
&& \lim_{j\to+\infty}\frac1{r_j}\fint_{B_{r_j}(x_0)}u^-= 0\\
{\mbox{and }} && \label{810}
\lim_{j\to +\infty}\sup_{B_{r_j}(x_0)}|\na u^-|>0.\end{eqnarray}
Consider the scaled functions $u_j(x):=\frac{u(x_0+r_j x)}{r_j}$. 
{F}rom Theorem~\ref{COR:7BIS} we obtain that~$|\nabla u_j|$ is bounded
uniformly in~$j$ in~$B_1$. Then, up to a subsequence,
we have that $u_j\to u_0$ as~$j\to+\infty$
uniformly in~$B_1$, for some function~$u_0$.  Moreover,
by~\eqref{ghgeasd96785},
$$ 0=
\lim_{j\to+\infty}\frac1{r_j}\fint_{B_{r_j}(x_0)}u^-
=\lim_{j\to+\infty}\fint_{B_1}u^-_j =\fint_{B_1}u_0^-.$$
This implies that
\begin{equation}\label{16}
{\mbox{$u_0^-=0$ in $B_1$.}}
\end{equation}
In addition, by~\eqref{810} and Lemma~\ref{L15},
$$ 0<
\lim_{j\to +\infty}\sup_{B_{r_j}(x_0)}|\na u^-|
=
\lim_{j\to +\infty}\sup_{B_1}|\na u^-_j|=\sup_{B_1}|\na u_0^-|
,$$
which is in contradiction with~\eqref{16}.
\end{proof}
From Lemmata~\ref{upng} and~\ref{0-grad} it follows that, 
near a degenerate point~$x_0\in \Gamma$, 
$u$ behaves almost like a minimizer of a one-phase functional.
It is well-known that for the one-phase the gradient $|\na u|$ is 
upper semicontinuous.
The aim of the next two lemmata is to establish this property of the gradient near 
degenerate points. 

First we recall the Bernoulli constant 
\begin{equation}\label{12.5bis}
\Lambda(x_0):=\left[\lambda_2\partial_{r_2}\Phi
\left(\lambda_1 \int_\Om Q\,\chi_{ \{ u<0\} },\;
\lambda_2\int_\Om Q \,\chi_{ \{ u>0\} }\right)\,
-\, \lambda_1\partial_{r_1}\Phi \left(\lambda_1
\int_\Om Q \,\chi_{ \{ u<0\} },\; \lambda_2
\int_\Om Q \,\chi_{ \{ u>0\} }\right)\,
\right]\,Q(x_0)
\end{equation}
measuring the gradient jump across the free boundary.
We observe that, in view of Lemma~\ref{FBCOND} and using the notation in~\eqref{no00n},
if~$x_0$ is a smooth point of~$\partial\{u>0\}$, we know that
\begin{equation}\label{no00n:2}
\big( \partial_\nu^+ u(x_0)\big)^2 - 
\big( \partial_\nu^- u(x_0)\big)^2=\Lambda(x_0).
\end{equation}
Furthermore, from Lemma \ref{0-grad} we should get that 
$(\partial_\nu u^+(x))^2=\Lambda(x_0)+o(1)$ as $x\to x_0$.
The next lemma\footnote{We point out that Lemma~\ref{lem-Qx}
is not explicitly used anywhere in this paper, but we included it
for completeness, since it clarifies the picture of the case
in which~$u^-$ is degenerate.} makes this statement precise. 

\begin{lem}\label{lem-Qx}
Let~$u$ be a minimizer in~$\Omega$ for the functional~$J$ in~\eqref{EN:FUNCT}.
Let~$x_0\in \fb{u}$, and suppose that~$u^-$ is degenerate at~$x_0$. Then,
\begin{equation}\label{labgrad}
\limsup_{x\to x_0}|\na u(x)|^2=\Lambda(x_0). \end{equation}
\end{lem}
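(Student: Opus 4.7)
\medskip

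The plan is to reduce the problem to a one-phase Alt--Caffarelli estimate via the blow-up procedure. First, Lemma~\ref{0-grad} gives $\lim_{r\to 0}\sup_{B_r(x_0)}|\nabla u^-|=0$, so $\limsup_{x\to x_0}|\nabla u^-(x)|^2 = 0$ and, since $\{u>0\}$ and $\{u<0\}$ are disjoint open sets where $|\nabla u|^2$ equals $|\nabla u^+|^2$ and $|\nabla u^-|^2$ respectively, the claim reduces to showing $\limsup_{x\to x_0}|\nabla u^+(x)|^2=\Lambda(x_0)$. Next, differentiating \eqref{PHI-s-12} via the chain rule gives the identity $\lambda_2\Phi_0'({\mathcal M}_2(u)) = \lambda_2\partial_{r_2}\Phi({\mathcal M}_1(u),{\mathcal M}_2(u))-\lambda_1\partial_{r_1}\Phi({\mathcal M}_1(u),{\mathcal M}_2(u))$, so that the constant $\lambda_0$ of Theorem~\ref{lem-ACF-blowup} coincides with $\Lambda(x_0)$ in \eqref{12.5bis}.

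For the upper bound $\limsup \le \Lambda(x_0)$, take any sequence $x_k\to x_0$ with $u(x_k)>0$ realizing the limsup, set $d_k:=\dist(x_k,\Gamma)\to 0$, and consider the rescaling $\tilde u_k(y):=u(x_k+d_ky)/d_k$. By Theorem~\ref{COR:7BIS}, the $\tilde u_k$ are uniformly Lipschitz; moreover $B_1\subset\{\tilde u_k>0\}$ so $\tilde u_k$ is harmonic in $B_1$ by Corollary~\ref{COR:5}, while $\Gamma(\tilde u_k)$ touches $\partial B_1$. An argument parallel to the proof of Theorem~\ref{lem-ACF-blowup} (using that $u^-(x_k+d_ky)/d_k\le|y|\sup_{B_{(1+|y|)d_k}(x_0)}|\nabla u^-|\to 0$ thanks to Lemma~\ref{0-grad}) shows that, up to a subsequence, $\tilde u_k\to\tilde u_0$ where $\tilde u_0\ge 0$ is a global minimizer of the one-phase Alt--Caffarelli functional with constant coefficient $\Lambda(x_0)$, harmonic in $B_1$. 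For such global minimizers the classical gradient bound $|\nabla\tilde u_0|^2\le\Lambda(x_0)$ in $\{\tilde u_0>0\}$ holds: $(|\nabla\tilde u_0|^2-\Lambda(x_0))^+$ is subharmonic in $\{\tilde u_0>0\}$ (by harmonicity there), decays at infinity in the natural scaling, and vanishes at the free boundary because of the Bernoulli condition (Lemma~\ref{FBCOND}, with $u^-\equiv 0$), so the maximum principle forces it to be zero. Interior harmonic estimates yield $\nabla\tilde u_k(0)\to\nabla\tilde u_0(0)$, so $|\nabla u(x_k)|^2\to|\nabla\tilde u_0(0)|^2\le\Lambda(x_0)$.

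For the lower bound $\limsup\ge\Lambda(x_0)$, blow up directly at $x_0$: set $u_\rho(y):=u(x_0+\rho y)/\rho$ and extract a subsequential limit $u_0$ as in Proposition~\ref{tech-2}. The previous step shows $u_0\ge 0$, and by Lemma~\ref{upng} applied before passing to the limit, $u_0\not\equiv 0$; moreover, by Theorem~\ref{lem-ACF-blowup}, $u_0$ is a global one-phase Alt--Caffarelli minimizer with constant $\Lambda(x_0)$. It suffices to exhibit, for every $\varepsilon>0$, a point $y_\varepsilon\in\{u_0>0\}$ with $|\nabla u_0(y_\varepsilon)|^2>\Lambda(x_0)-\varepsilon$, because then Lemma~\ref{L15} gives $|\nabla u(x_0+\rho_k y_\varepsilon)|^2\to|\nabla u_0(y_\varepsilon)|^2$, and $x_0+\rho_k y_\varepsilon\to x_0$. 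Such $y_\varepsilon$ exists by applying Theorem~\ref{GMT-00} (part~(iii)) to $u_0$ to pick a reduced free boundary point $z\in\partial_{\rm red}\{u_0>0\}$; at $z$ the free boundary is flat, the blow-up of $u_0$ at $z$ is the half-plane solution $\sqrt{\Lambda(x_0)}\langle y,\nu\rangle^+$, and the gradient of $u_0$ approaches $\sqrt{\Lambda(x_0)}$ non-tangentially from inside $\{u_0>0\}$. Combining the two inequalities gives the claim.

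The main obstacle is justifying the classical one-phase ingredients for the blow-up limit~$u_0$: namely the interior gradient bound $|\nabla u_0|^2\le\Lambda(x_0)$ used in the upper bound, and the half-plane blow-up at a reduced boundary point used in the lower bound. Both rely on the now-constant coefficient~$\Lambda(x_0)$ and on the fact that $u_0$ inherits nondegeneracy, Lipschitz continuity, and the distributional Bernoulli condition from $u$; the arguments are standard Alt--Caffarelli theory once the setting has been linearized by Theorem~\ref{lem-ACF-blowup}, but must be invoked carefully to ensure that the auxiliary subharmonic/approximation steps are legitimate without appealing to extra regularity.
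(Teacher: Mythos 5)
Your reduction to $u^+$ via Lemma~\ref{0-grad} and the identification $\lambda_0=\Lambda(x_0)$ through the chain rule for $\Phi_0$ are correct, and your lower bound is essentially sound: blow up at $x_0$, note that $u_0\ge0$ follows from Lemma~\ref{0-grad} (not from the ``previous step''), use the classical asymptotics of the Alt--Caffarelli minimizer $u_0$ at a reduced free boundary point, and transfer the gradient back to $u$ by Lemma~\ref{L15}; the appeal to Theorem~\ref{GMT-00} should simply be replaced by the corresponding classical facts for one-phase minimizers. The genuine gap is in the upper bound. Everything there rests on the sharp bound $|\nabla\tilde u_0|^2\le\Lambda(x_0)$ in $\{\tilde u_0>0\}$ for the blow-up limit, and the justification you offer does not stand: $(|\nabla\tilde u_0|^2-\Lambda(x_0))^+$ is indeed subharmonic in $\{\tilde u_0>0\}$, but to run the maximum principle you need $\limsup_{x\to z}|\nabla\tilde u_0(x)|^2\le\Lambda(x_0)$ at \emph{every} point $z\in\partial\{\tilde u_0>0\}$ (including possible non-reduced points) together with control at infinity. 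Lemma~\ref{FBCOND} is a distributional identity obtained from domain variations; it gives no pointwise information at an arbitrary free boundary point, so ``vanishes at the free boundary because of the Bernoulli condition'' is exactly the one-phase version of the statement being proved --- the argument is circular as written. Likewise ``decays at infinity in the natural scaling'' is unsubstantiated: the quantity is merely bounded, and one would need a blow-down/classification input to control it.

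The paper avoids this entirely, and its route is worth noting. It blows up along a sequence $x_k$ realizing the limsup, but centered at the projections $y_k\in\partial\{u>0\}$ and scaled by $\rho_k=\dist(x_k,\Gamma)$, so the limit $u_0$ contains a touching ball $B_1(-e_n)\subseteq\{u_0>0\}$ on which $|\nabla u_0|\le\gamma$ with equality at the center. A purely interior argument (strong maximum principle for the harmonic directional derivative $\partial_\ell u_0$ in that ball), unique continuation, and a second blow-up showing that $\{u_0=0\}$ contains a strip force $u_0$ to be the one-plane solution $u_0(x)=-\gamma x_n$ in $\{x_n<0\}$, vanishing for $0<x_n<\delta$. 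Its free boundary is then a hyperplane, where the Bernoulli relation~\eqref{no00n:2}, with constant $\Lambda(x_0)$ coming from Theorem~\ref{lem-ACF-blowup} as you observed, is legitimately applied and gives $\gamma^2=\Lambda(x_0)$, i.e.\ both inequalities at once, with no global gradient bound needed. If you wish to keep your two-sided structure, the upper bound must either import a correctly quoted sharp gradient estimate for global one-phase minimizers (valid also at singular free boundary points, with a genuine proof or reference) or be replaced by an argument of this interior type.
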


\begin{proof} 
Let us denote by 
\begin{equation}\label{LISU}
\gamma:=\limsup_{x\to x_0}|\na u(x)|.\end{equation} 
By Lemma~\ref{0-grad},
$$
\gamma=\limsup_{x\to x_0}|\na u^+(x)|,$$
hence, in order to prove~\eqref{labgrad}, one has to show that 
\begin{equation}\label{qw9e}
\gamma^2=\Lambda(x_0).\end{equation} 
For this, we take a sequence $x_k\to x_0$ such that~$x_k\in\{u>0\}$
and~$|\na u^+(x_k)|\to \gamma$
as~$k\to+\infty$.

Let $\rho_k:=\dist(x_k, \fb u)$ and let $y_k\in \fb u$ such that 
$\rho_k=|x_k-y_k|$. Consider the blow-up sequence 
$u_k(x):=\frac{u(y_k+\rho_k x)}{\rho_k}$.
{F}rom Proposition~\ref{tech-2}, up to a subsequence, we may assume
that $u_k\to u_0$ as~$k\to+\infty$ locally uniformly.

Without loss of generality we can also assume that
$$\frac{x_k-y_k}{\rho_k}\to -e_n, \quad {\mbox{as }}k\to+\infty,$$
where $e_n$ is the unit direction of the $x_n$ axis.
Thus we have that 
\begin{equation}\label{B1} B_1(-e_n)\subseteq \{u_0>0\}.\end{equation}
This and Lemma~\ref{L15} give that
\begin{equation}\label{78uyja1h} \gamma=\lim_{k\to+\infty}|\nabla u^+(x_k)|=
\lim_{k\to+\infty}\left|\nabla u_k^+\left( \frac{x_k-y_k}{\rho_k}\right)\right|
=|\na u_0(-e_n)|.\end{equation}
{F}rom~\eqref{B1}, we also obtain that
\begin{equation}\label{8945}
{\mbox{$u_0$ is harmonic in $B_1(-e_n)$,}}\end{equation}
thanks to Lemma~\ref{LEMMA2}
and Proposition~\ref{tech-2}.

We also observe that
\begin{equation}\label{LISU:3}
|\na u_0|\le \gamma\ \mbox{in}\ B_1(-e_n).\end{equation}
Indeed, if~$\bar x\in B_1(-e_n)$, we write~$\bar x=-e_n+z$, with~$|z|<1$
and we set
$$ z_k:= y_k+\rho_k \bar x= y_k+ \rho_k z-\rho_k e_n
=\rho_k \left( \frac{y_k-x_k}{\rho_k} -e_n\right)+\rho_k z+x_k\to x_0,$$
as~$k\to+\infty$. Thus, by~\eqref{LISU},
\begin{equation}\label{LISU:2}
\gamma=\limsup_{x\to x_0}|\na u(x)|\ge
\lim_{k\to+\infty} |\na u(z_k)|.\end{equation}
On the other hand,
$$ \nabla u_k(\bar x)=\nabla u(y_k+\rho_k \bar x)=\nabla u(z_k).$$
Hence, taking the limit as~$k\to+\infty$
(and recalling~\eqref{B1} and Lemma~\ref{L15}), we see that
$$ |\nabla u_0(\bar x)|=\lim_{k\to+\infty} |\nabla u(z_k)|.$$
This and~\eqref{LISU:2} imply \eqref{LISU:3}, as desired.

We also remark that
\begin{equation}
\label{g00}
\gamma>0.\end{equation}
Indeed, if~$\gamma=0$,
it follows from~\eqref{LISU:3} that~$u_0$ is constant in~$B_1(-e_n)$.
Thus, since
\begin{equation}\label{i0osd}
u_0(0)=\lim_{k\to+\infty} u_k(0)=\lim_{k\to+\infty} \frac{u(y_k)}{\rho_k}=0,\end{equation}
we obtain that~$u_0$ vanishes identically in~$B_1(-e_n)$,
in contradiction with~\eqref{B1}, thus proving~\eqref{g00}.

Now, we claim that
\begin{equation}\label{812}
u_0(x)=-\nabla u_0(-e_n)\cdot x {\mbox{ for any }}x\in B_1(-e_n).
\end{equation}
For this, we argue as follows:
by~\eqref{78uyja1h} and~\eqref{g00}, we can define
$$ \ell:= -\frac{\na u_0(-e_n)}{\gamma}=
-\frac{\na u_0(-e_n)}{|\na u_0(-e_n)|}.$$
Then, by~\eqref{78uyja1h}
and~\eqref{LISU:3}, we find that
\begin{equation}\label{8912}
\partial_\ell u_0(-e_n)=-\gamma \quad{\mbox{ and }}\quad \partial_\ell u_0(x)\ge-\gamma\ \mbox{in}\ B_1(-e_n).\end{equation}
Furthermore, in light of~\eqref{8945}, we know that~$\partial_\ell u_0$
is harmonic in~$B_1(-e_n)$. This, \eqref{8912} and the strong maximum
principle imply that~$\partial_\ell u_0=-\gamma$ in~$B_1(-e_n)$.

Then, we take a rotation~${\mathcal{R}}$ such that~$e_1={\mathcal{R}} \ell$
and we define~$ v_0(x):= u_0({\mathcal{R}} x)$.
We have that
$$\partial_1 v_0(x)= 
\big( {\mathcal{R}}\nabla u_0({\mathcal{R}} x)\big)\cdot e_1
=\nabla u_0({\mathcal{R}} x)\big)\cdot\ell=-\gamma$$
for any~$x$ such that~${\mathcal{R}} x\in B_1(-e_n)$.

Consequently, for any~$x$ such that~${\mathcal{R}} x\in B_1(-e_n)$,
we have that
\begin{equation} \label{t7}
v_0(x)=-\gamma x_1 + \tilde v(x_2,\dots,x_n),\end{equation}
for some~$\tilde v:\R^{n-1}\to\R$.
In particular,
\begin{equation}\label{09}
|\nabla v_0|^2 = |\gamma|^2 +\sum_{i=2}^{n}|\partial_i \tilde v|^2.\end{equation}
On the other hand, by~\eqref{LISU:3},
for any~$x$ such that~${\mathcal{R}} x\in B_1(-e_n)$,
$$ |\nabla v_0(x)|^2=
\big| {\mathcal{R}}\nabla u_0({\mathcal{R}} x)\big|^2\le
|\nabla u_0({\mathcal{R}} x)|^2\le\gamma.$$
Then we insert this into~\eqref{09} and we obtain that~$\partial_i \tilde v$
vanishes identically for any~$i\in\{2,\dots,n\}$, hence~$\tilde v$
is constant, and~\eqref{t7} reduces to
$$ v_0(x)=-\gamma x_1 +\tilde c,$$
for some~$\tilde c\in\R$.

Now, we recall~\eqref{i0osd} and we obtain that
$$ 0=u_0(0)=v_0(0)=\tilde c.$$
As a consequence, we obtain that
$$v_0(x)=-\gamma x\cdot e_1=-\gamma x\cdot {\mathcal{R}}^T \ell=
-\gamma ({\mathcal{R}} x)\cdot \ell.$$
Hence 
$$ u_0(x)=v_0( {\mathcal{R}}^T x)=-\gamma x\cdot \ell,$$
thus completing the proof of~\eqref{812}.

Now, from~\eqref{B1} and~\eqref{812}, we deduce that~$\ell=e_n$,
and therefore
\begin{equation}\label{PF:1}
u_0(x)=-\gamma x_n {\mbox{ in }} B_1(-e_n).
\end{equation}
We claim that, in fact,
\begin{equation}\label{PF:2}
u_0(x)=-\gamma x_n {\mbox{ in }} \{x_n<0\}.
\end{equation}
To check this, we recall~\eqref{B1} and we denote by~${\mathcal{C}}$
the connected component of~$\{u_0>0\}$ that contains~$B_1(-e_n)$.
By Corollary~\ref{COR:5}, we know that~$u_0$ is harmonic in~${\mathcal{C}}$.
Hence, by~\eqref{PF:1} and the unique continuation principle, we obtain that~$u_0(x)=-\gamma x_n$
in~${\mathcal{C}}$. As a consequence, since~$u_0$ vanishes along~$ \partial {\mathcal{C}}$,
we have that 
$$ \partial {\mathcal{C}}\subseteq \{-\gamma x_n=0\} =\{x_n=0\}, $$
thanks to~\eqref{g00}, and this establishes~\eqref{PF:2}.

It remains to show that 
$\fb{ u_0}=\{y_n=0\}$. 
To see this it is enough to show that there exists~$\delta>0$ such that
\begin{equation}\label{no00n:3}
{\mbox{$u_0=0$ in $\{x_n\in(0,\delta)\}$.}}
\end{equation}
Suppose that 
\[s=\limsup_{\begin{subarray}
yy_n\searrow 0,\\ y'\in\R^{n-1},\\ u_0(y', y_n)>0
\end{subarray}
}\frac{\partial u_0(y', y_n)}{\partial y_n}.
\]
Note that $u_0$ is a minimizer of ACF functional, see Theorem~\ref{lem-ACF-blowup}. 
Taking a sequence $\frac{\partial u_0(y'_k, h_k)}{\partial y_n}\to 0
$ as $h_k\to 0$ and using the same argument above it follows that 
the second blow of $u_0$, which we call $u_{00}$, with respect to the balls 
$B_{h_k}(y_k', 0)$ is of the form~$u_{00}=s y_n$, with~$y_n>0$.
This is a contradiction, since the zero set of 
the minimizers of ACF functional has nontrivial measure.
Thus it follows that $s=0$ and consequently 
we have that $u_0=0$ in some strip $\{0<y_n<\delta\}$, for a suitable~$\delta>0$.
This establishes \eqref{no00n:3}.

Now, in light of~\eqref{no00n:2}, \eqref{PF:2}, 
and~\eqref{no00n:3}, we have that
$$ \Lambda(x_0)=
\big( \partial_\nu^+ u_0(0)\big)^2 - \big( \partial_\nu^- u_0(0)\big)^2=\gamma^2-0,$$
which proves~\eqref{qw9e},
as desired.
\end{proof}

Now we prove a flatness result in dimension~$2$
for the case in which~$u^-$ is degenerate.

\begin{prop}\label{NEWPF}
Let $u$ be as in Lemma \ref{lem-Qx}. If $n=2$ then the free boundary of $u$ is flat at every point. 
\end{prop}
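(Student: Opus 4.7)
The plan is to combine the asymptotic analysis from Lemma~\ref{lem-Qx} with the Alt--Caffarelli--Friedman monotonicity formula and the two-dimensional one-phase theory to conclude flatness at every free boundary point in a neighborhood of $x_0$.

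First, I would note that the proof of Lemma~\ref{lem-Qx} already constructs, for every blow-up sequence at $x_0$, a limit $u_0$ of the explicit half-plane form
\[
u_0(x)=-\sqrt{\Lambda(x_0)}\,x_2 \quad \text{on } \{x_2\leq 0\}, \qquad u_0\equiv 0 \quad \text{on } \{x_2\geq 0\},
\]
after a suitable rotation of coordinates, where $\Lambda(x_0)>0$. In particular $u_0^-\equiv 0$ in $\R^2$ and $\partial\{u_0>0\}=\{x_2=0\}$, so $x_0$ itself is a flat point in the standard sense.

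Second, I would apply the Alt--Caffarelli--Friedman monotonicity formula (recalled in the proof of Theorem~\ref{COR:7BIS}) to $u^+$ and $u^-$ based at $x_0$. By Lemma~\ref{0-grad} we have $\sup_{B_r(x_0)}|\nabla u^-|\to 0$ as $r\to 0$, so the factor $\frac1{r^2}\int_{B_r(x_0)}|\nabla u^-|^2/|x-x_0|^{n-2}\,dx$ tends to $0$, whereas by Lemma~\ref{upng} the $u^+$-factor stays bounded away from zero on some scale. Monotonicity forces the product $\Phi(r,u^+,u^-)$ to vanish identically on some interval $(0,r_0)$, and the equality case then yields $u^-\equiv 0$ in a neighborhood $U$ of $x_0$.

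Third, since $u\geq 0$ on $U$, the problem is locally one-phase. By Theorem~\ref{lem-ACF-blowup}, at any $y_0\in\Gamma\cap U$ the blow-up is a minimizer of the classical one-phase Alt--Caffarelli functional, and the two-dimensional classification of global minimizers of that functional with linear growth (combined with nondegeneracy from Theorem~\ref{prop-nondeg} and the Lipschitz bound from Theorem~\ref{COR:7BIS}) singles out half-plane solutions as the only possibility. Therefore every $y_0\in\Gamma\cap U$ is flat.

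The main obstacle will be justifying the passage from the vanishing of $\Phi(r,u^+,u^-)$ to the identity $u^-\equiv 0$ on a ball around $x_0$. The subtlety is that our problem has a nonlocal Bernoulli condition and is not covered verbatim by~\cite{ACF}; however, the ACF monotonicity formula and its equality case rely only on the subharmonicity of $u^+$ and $u^-$ (guaranteed by Lemma~\ref{LEMMA2} applied to $u$ and to $-u$, which is a minimizer of the companion functional with phases swapped) together with $u^+u^-=0$, and these facts do hold in our setup.
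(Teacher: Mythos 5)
Your argument has a genuine gap at its central step. From Lemma~\ref{0-grad} you do get that the negative factor $\frac1{r^2}\int_{B_r(x_0)}|\nabla u^-|^2\,dx$ tends to $0$, hence the Alt--Caffarelli--Friedman quantity $\phi(r,x_0,u)$ tends to $0$ as $r\to0$; but a monotone nondecreasing function whose limit at $0$ is zero need not vanish on any interval $(0,r_0)$ (think of $\phi(r)=r$), so monotonicity does not force $\phi\equiv0$ near $0$, and you never reach the ``equality case''. Consequently the conclusion $u^-\equiv 0$ in a neighborhood $U$ of $x_0$ is unjustified: degeneracy of $u^-$ at $x_0$ only says that the negative phase is $o(r)$ at $x_0$ (so it disappears in the blow-up limit), not that $u\ge0$ in a full neighborhood; small negative components could a priori accumulate at $x_0$, and nothing in the paper excludes this. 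Since your third step (local one-phase reduction at every $y_0\in\Gamma\cap U$) rests entirely on this, the proof does not go through. Your first step is also inaccurate: the limit constructed in Lemma~\ref{lem-Qx} is not a blow-up of $u$ at $x_0$, but a limit of rescalings centered at the moving points $y_k\in\partial\{u>0\}$ nearest to a special sequence $x_k$ along which $|\nabla u^+|$ realizes its limsup; it identifies $\Lambda(x_0)$ but says nothing about arbitrary blow-ups at $x_0$, which is what flatness at $x_0$ requires.

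The paper's proof stays at $x_0$ and avoids any local one-phase reduction: take an arbitrary blow-up sequence $u_k(x)=u(x_0+r_kx)/r_k$; by Lipschitz continuity (Theorem~\ref{COR:7BIS}) and Proposition~\ref{tech-2} it converges to some $u_0$, which is nonnegative because $\sup_{B_{r_kR}(x_0)}|\nabla u^-|\to0$ by Lemma~\ref{0-grad}; by Theorem~\ref{lem-ACF-blowup}, $u_0$ is a minimizer of the one-phase Alt--Caffarelli functional with constant $\lambda_0$; finally, the Weiss monotonicity formula, comparing the blow-up and blow-down of $u_0$ (both half-plane solutions in dimension two), forces $u_0$ to be homogeneous of degree one and hence a half-plane solution, which gives flatness at $x_0$. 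Your third step is close to this in spirit, and an appeal to the two-dimensional classification of global one-phase minimizers with linear growth could replace the Weiss argument; but it must be applied to the blow-ups at $x_0$ itself, with their nonnegativity coming from Lemma~\ref{0-grad}, not from the (unproved, and not needed) claim that $u^-$ vanishes identically near $x_0$. Points where $u^-$ is nondegenerate are handled separately in Lemma~\ref{lem-nax-verjin}.
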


\begin{proof}
The proof is based on a compactness argument.
Consider $u_k(x)=\frac{u(x_0+r_k x)}{r_k}$ for some positive sequence $r_k\searrow 0$ with $x_0\in\partial\{u>0\}$.

Since, by Theorem~\ref{COR:7BIS},
$u$ is Lipschitz continuous, it follows that $\{u_k\}$ is locally Lipschitz.
So, in view of Proposition \ref{tech-2}, we can employ a customary compactness argument to show that there exists a subsequence $\{u_k\}$ 
converging to a limit $u_0\in W^{1, \infty}_{\rm loc}(\R^2)$
such that
  \begin{eqnarray}\label{HD0}
   u_k\rightarrow u_0 \qquad \textrm{ strongly in} \ W^{1, p}_{\rm loc}(\R^2), \forall p>1\  \textrm{ and } C^\alpha_{\rm loc}(\R^n), \forall \alpha\in (0,1) \
\textrm{as}\ k\rightarrow \infty,\\\label{HD}
   \partial \{ u_k >0 \} \rightarrow \partial \{ u_0 >0 \}\qquad \textrm{in Hausdorff distance}\  d_\H\textrm { locally in}\   \R^2, \\\label{HD1}
   \I{u_k}\rightarrow \I{u_0}\qquad  \textrm{in}\  L^1_{\rm loc}(\R^2).
  \end{eqnarray}
For the proofs of \eqref{HD0}-\eqref{HD1} we refer the reader for instance to \cite{DP-2D} and \cite{Wolan-adv}.

Let $u_0$ be a blow-up of $u$ at $x_0\in \fb u$.
Then, by Theorem~\ref{lem-ACF-blowup}
and the degeneracy of~$u^-$, it holds that $u_0\ge 0$ is a minimizer of the Alt-Caffarelli functional in $\R^2$
with the constant $\lambda_0=\lambda_0(u)$ given in~\eqref{di u}. 
Hence, we now claim that 
\begin{equation}\label{POjAI10}
u_0=\lambda_0x_1^+\end{equation} in a appropriate coordinate system
(in which~$x_0$ is the origin). Indeed, let 
\[W(x, r, u_0)=\frac1{r^{2}}\int_{B_r(x)}|\nabla u_0|^2+\lambda_0^2\I {u_0}-\frac1{r^4}\int_{\p B_r(x)}u^2_0\]
be the Weiss energy. {F}rom the monotonicity of $W$ (see~\cite{Weiss})
it follows that if we first let $r\to 0$, and then $r\to \infty$ for fixed $s>0$  then 
\[
W(0, s, u_{00})\le W(0, 0, u_0)\le W(0, sr, u_0)\le  W(0, \infty, u_0)=W(0, s, u_{0\infty})
,\]
where $u_{00}$ is a blow-up of $u_0$ at the origin and $u_{0\infty}$
a blow-down of $u_0$ at $0$, namely
$$u_{0\infty}=\lim_{R_k\to \infty}\frac{u_0(R_kx)}{R_k}.$$ Both the
blow-up and the blow-down are well-defined for 
$u_0$ because it is a minimizer of the Alt-Caffarelli functional, see \cite{Weiss}.
But $W(0, s, u_{00})=W(0, s, u_{0\infty})={\rm const}$, since in a suitable coordinate system both functions $u_{00}, u_{0\infty}$
will have the form $\lambda_0 x_1^+$.
Hence $u_0$ must be homogeneous and therefore linear, thus proving~\eqref{POjAI10}, which in turn gives the desired result. 
\end{proof}

\subsection{The case in which $u^-$ is nondegenerate}

If $u^-$ is nondegenerate then the Alt-Caffarelli-Friedman functional
\begin{equation}
\phi(r, x_0, u)=\frac1{r^4}\int_{B_r(x_0)}\frac{|\na u^+|^2}{|x-x_0|^2}dx\int_{B_r(x_0)}\frac{|\na u^-|^2}{|x-x_0|^2}dx
\end{equation}
has positive limit and therefore from Theorem 7.4 (i) in~\cite{ACF}
when 
$n=2$ the blow-up $u_0$ must be a two-plane solution.

\begin{lem}\label{lem-nax-verjin}
Let~$u$ be a minimizer in~$\Omega$ for the functional~$J$ in~\eqref{EN:FUNCT}, with~$0\in \fb{u}$. Assume that~\eqref{IOTA:0} and~\eqref{IOTA} are satisfied.

Let $\{r_j\}_{j\in\N}$ be a
sequence of positive numbers such that 
$r_j\searrow 0$ as~$j\to+\infty$. 
Let $x_0\in \fb u$ and set $u_j(x):=\frac{u(x_0+r_jx)}{r_j}$ such that $u_j\to u_0$
as~$j\to+\infty$
for some subsequence, still denoted by~$\{r_j\}$. 
If $n=2$ and $u^-$ is nondegenerate at $x_0$ then 
\[\lim_{r_j\to 0}\phi(r_j, x_0, u)=\gamma>0\]
and $u_0$ is a two-plane solution, namely
$$u_0(x)=\mu_1(x\cdot \ell)^+-\mu_2(x\cdot \ell)^-$$
for some unit direction  $\ell$ and positive constants $\mu_1, \mu_2 
$.
\end{lem}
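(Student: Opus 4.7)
The approach is to combine the Alt--Caffarelli--Friedman (ACF) monotonicity formula (already used in the proof of Theorem~\ref{COR:7BIS}) with the blow-up convergence of Proposition~\ref{tech-2} and Lemma~\ref{L15}, and then to apply the rigidity case of the ACF inequality to identify $u_0$.

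First, I would establish the existence and finiteness of $\gamma := \lim_{r \searrow 0} \phi(r, x_0, u)$. Existence and monotonicity are immediate from the ACF formula, and the upper bound $\phi(1, x_0, u) < +\infty$ follows from the uniform Lipschitz estimate in Theorem~\ref{COR:7BIS}; in particular, $\lim_j \phi(r_j, x_0, u) = \gamma$ for any $r_j \searrow 0$.

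Next, I would prove $\gamma > 0$. The nondegeneracy of $u^-$ at $x_0$ gives $\frac{1}{r}\fint_{B_r(x_0)} u^- \ge c_0 > 0$ for small $r$, and a Cauchy--Schwarz argument yields $\int_{B_r(x_0)} (u^-)^2 \ge c\, r^{n+2}$. Moreover, Theorem~\ref{HJsdfg} ensures that $\{u > 0\}$ contains a ball of radius proportional to $r$ inside $B_r(x_0)$, so $\{u^- = 0\}$ has positive relative measure in $B_r(x_0)$, and Poincar\'e's inequality then gives $\int_{B_r(x_0)} |\nabla u^-|^2 \ge c\, r^n$. Symmetrically, Lemma~\ref{upng} yields $\int_{B_r(x_0)} (u^+)^2 \ge c\, r^{n+2}$, while the Lipschitz bound combined with the assumed nondegeneracy of $u^-$ produces a set of measure at least $\tilde c\, r^n$ inside $B_r(x_0)$ on which $u < 0$, so another application of Poincar\'e gives $\int_{B_r(x_0)} |\nabla u^+|^2 \ge c\, r^n$. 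Since $n = 2$, the weight $|x-x_0|^{n-2}$ is identically $1$, so multiplying these bounds and dividing by $r^4$ yields $\phi(r, x_0, u) \ge c > 0$, hence $\gamma > 0$.

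Finally, I would identify $u_0$. The scaling property $\phi(r, 0, u_j) = \phi(r r_j, x_0, u)$ gives $\phi(r, 0, u_j) \to \gamma$ for every fixed $r > 0$. Since $|\nabla u_j|$ is uniformly bounded in $L^\infty_{\mathrm{loc}}$ by Theorem~\ref{COR:7BIS}, and $\nabla u_j \to \nabla u_0$ a.e. by Lemma~\ref{L15}, dominated convergence gives $\phi(r, 0, u_0) = \gamma$ for every $r > 0$. Hence $r \mapsto \phi(r, 0, u_0)$ is constant. The rigidity case of the ACF monotonicity formula, via the Friedland--Hayman equality case on the sphere, then forces the positivity and negativity sets of $u_0$ to be complementary half-spaces, with $u_0^\pm$ linear on each, producing the desired two-plane expression $u_0(x) = \mu_1 (x \cdot \ell)^+ - \mu_2 (x \cdot \ell)^-$ with $\mu_1, \mu_2 > 0$ (positivity of both constants being guaranteed by $\gamma > 0$). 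The main obstacle I anticipate is the clean application of this rigidity: one must verify that the hypotheses of the equality case apply to the blow-up $u_0$, which reduces to checking that $u_0^\pm$ are nonnegative subharmonic functions (inherited from Lemma~\ref{LEMMA2} via Proposition~\ref{tech-2}) with disjoint supports, both conditions being preserved under the blow-up.
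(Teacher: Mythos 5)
Your proposal is correct and follows essentially the same route as the paper: scale invariance together with the monotonicity and boundedness of the ACF functional give the limit $\gamma$, the blow-up convergence (Proposition~\ref{tech-2}, Lemma~\ref{L15}) yields $\phi(r,0,u_0)\equiv\gamma>0$ for all $r$, and the equality/rigidity case of the ACF formula (which the paper reaches via degree-one homogeneity and Lemma~6.6 in~\cite{ACF}) identifies $u_0$ as a two-plane solution. Your Poincar\'e and clean-ball argument for $\gamma>0$ simply fills in, in more quantitative detail, a step the paper attributes directly to the nondegeneracy of both phases.
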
 

\begin{proof}
{F}rom the scale invariance of the Alt-Caffarelli-Friedman
functional we have that 
\[\phi(r_js, x_0, u)=\phi(s, 0, u_j).\]
Since, by Lemma~\ref{upng}, 
we have that~$u^+$ is nondegenerate, and by assumption so is $u^-$
at $x_0$, then it follows that 
the limit $$\lim_{r_j\to 0}\phi(sr_j, x_0, u)$$
exists and is independent of $s>0$, because $\phi$ is monotone and bounded thanks to Lipschitz continuity of $u$.
Therefore we have that \[\phi(s, 0, u_0)=\gamma>0, \qquad\forall s>0,\]
which implies that $u_0$ must be a
homogeneous function of degree 
1 (by the nondegeneracy of $u_0^+$ and the Lipschitz regularity of $u$,
recall Theorem~\ref{prop-nondeg} and Corollary~\ref{COR:7}).
Applying Lemma~6.6 in~\cite{ACF}, the desired result follows.
\end{proof}

Summarizing Proposition~\ref{NEWPF}
and Lemma~\ref{lem-nax-verjin}, we obtain
the result in Theorem~\ref{thm:reg}:
 
\begin{proof}[Proof of Theorem~\ref{thm:reg}]
By Lemma \ref{lem-visc-sol-full} we know that~$u$ is a viscosity solution.
It follows from Proposition~\ref{NEWPF}
and Lemma~\ref{lem-nax-verjin}
that the free boundary $\fb u$ is flat at each point. Hence, the proof of the theorem follows from the regularity theory of
Caffarelli developed for the viscosity solutions \cite{C-H1, C-H2}.
See also Proposition 6.1 in~\cite{DK}.
\end{proof}

\section*{Conflict of interests statement}{The authors declare that they do not have any conflict of interests.}

\vfill
\end{document}